\documentclass[reqno]{amsart}
\usepackage[T1]{fontenc}
\usepackage{amsmath,a4wide}
\usepackage{amssymb}
\usepackage{amscd}
\usepackage{epsfig}
\usepackage[T1]{fontenc}
\usepackage{epsfig}
\usepackage{amsmath}
\usepackage{graphicx}
\usepackage{subfigure}
\usepackage{mathrsfs}
\usepackage{epstopdf}
\usepackage{color}
\usepackage{booktabs}
\usepackage{ifpdf}
\usepackage{cite}
\usepackage{amsthm}
\usepackage{amsmath}
\usepackage{dsfont}
\definecolor{myblue}{rgb}{0.2,0,0.9}
\definecolor{blue-violet}{rgb}{0.54, 0.17, 0.89}
\usepackage{enumitem}
\setlist[enumerate]{label={\upshape(\roman*)}}
\usepackage{algorithm,algorithmic}

\usepackage{pgfplots}
\usepgfplotslibrary{groupplots}
\pgfplotsset{compat=1.12}

\usepackage{mathtools}

\usepackage{tikz}
\usetikzlibrary{arrows,intersections}
\usepackage{boldline}
\usepackage{multirow}
\usepackage[margin=1.32in]{geometry}

\usepackage{varioref}
\usepackage{xr-hyper}
\usepackage{hyperref}
\hypersetup{
colorlinks,linkcolor=blue,citecolor=blue,filecolor=red,urlcolor=blue}

\usepackage{todonotes}

\def\d{{\rm d}}

\def\K{\mathcal{K}}

\def\N{\mathbb{N}}

\def\R{\mathbb{R}}
\def\Rd{\mathbb{R}^d}

\def\A{\mathcal{A}}
\def\B{\mathcal{B}}
\def\C{{\rm C}}
\def\c{{\rm c}}
\def\E{\mathbb{E}}
\def\H{\mathcal{H}}
\def\L{\mathcal{L}}
\def\M{\mathcal{M}}
\def\cN{\mathcal{N}}
\def\cO{\mathcal{O}}
\def\cP{\mathcal{P}}
\def\cU{\mathcal{U}}
\def\W{\mathcal{W}}
\def\epsilon{\varepsilon}
\def\one{\mathds{1}}
\definecolor{myblue}{rgb}{0.2,0,0.9}
\definecolor{blue_violet}{rgb}{0.54, 0.17, 0.89}
\definecolor{darkgreen}{rgb}{0,0.35,0}

\newcommand{\llsim}{%
\mathrel{%
	\raisebox{0.4ex}[0pt][0pt]{%
		$\underset{\raisebox{0.6ex}{$\sim$}}{\ll}$%
	}%
}%
}

\makeatletter
\DeclareRobustCommand*\cal{\@fontswitch\relax\mathcal}
\makeatother

\makeatletter
\newcommand{\labeltext}[3][]{%
\@bsphack%
\csname phantomsection\endcsname
\def\tst{#1}%
\def\labelmarkup{\emph}
\def\refmarkup{}%
\ifx\tst\empty\def\@currentlabel{\refmarkup{#2}}{\label{#3}}%
\else\def\@currentlabel{\refmarkup{#1}}{\label{#3}}\fi%
\@esphack%
\labelmarkup{#2}
}
\makeatother

\newtheorem{theorem}{Theorem}[section]
\newtheorem{proposition}[theorem]{Proposition}

\newtheorem{corollary}[theorem]{Corollary}
\newtheorem{lemma}[theorem]{Lemma}
\numberwithin{equation}{section}

\theoremstyle{definition}
\newtheorem{remark}[theorem]{Remark}
\newtheorem{example}[theorem]{Example}
\newtheorem{definition}[theorem]{Definition}
\newtheorem{assumption}[theorem]{Assumption}

\DeclareMathOperator*{\argmin}{arg\,min}

\DeclareMathOperator{\Lipb}{Lip_b}

\DeclareMathOperator{\ReLU}{ReLU}

\DeclareMathOperator{\supp}{supp}
\DeclareMathOperator{\sgn}{sgn}

\DeclareMathOperator*{\linspan}{span}
\DeclareMathOperator*{\trace}{tr}
\DeclareMathOperator*{\id}{id}
\DeclareMathOperator*{\Cb}{C_b}
\DeclareMathOperator*{\Ck}{C_\kappa}
\DeclareMathOperator*{\Cbi}{C_b^\infty}
\DeclareMathOperator*{\Cci}{C_c^\infty}
\DeclareMathOperator{\cov}{cov}

\usepackage{xparse}
\makeatletter
\RenewDocumentCommand{\title}{om}{%
\IfNoValueTF{#1}
{\gdef\shorttitle{}}
{\gdef\shorttitle{#1}}%
\gdef\@title{#2}%
}
\makeatother

\makeatletter
\def\@tocline#1#2#3#4#5#6#7{\relax
\ifnum #1>\c@tocdepth 
\else
\par \addpenalty\@secpenalty\addvspace{#2}%
\begingroup \hyphenpenalty\@M
\@ifempty{#4}{%
	\@tempdima\csname r@tocindent\number#1\endcsname\relax
}{%
	\@tempdima#4\relax
}%
\parindent\z@ \leftskip#3\relax \advance\leftskip\@tempdima\relax
\rightskip\@pnumwidth plus4em \parfillskip-\@pnumwidth
#5\leavevmode\hskip-\@tempdima
\ifcase #1
\or\or \hskip 2em \or \hskip 2em \else \hskip 3em \fi%
#6\nobreak\relax
\hfill\hbox to\@pnumwidth{\@tocpagenum{#7}}\par
\nobreak
\endgroup
\fi}
\makeatother

\title[Neural operators approximate convex monotone semigroups]{Neural operators approximate strongly 
continuous convex monotone semigroups}

\author[J.~Blessing]{Jonas Blessing}
\address{Department of Mathematics, ETH Zurich, Switzerland}
\email{jonas.blessing@math.ethz.ch}

\author[P.~Schmocker]{Philipp Schmocker}
\address{Department of Mathematics, ETH Zurich, Switzerland}
\email{philipp.schmocker@math.ethz.ch}

\author[A.~Sgarabottolo]{Alessandro Sgarabottolo}
\address{Department of Mathematics, LMU Munich, Germany}
\email{sgarabottolo@math.lmu.de}

\thanks{\textit{Key words:} convex monotone semigroup, Chernoff approximation, optimal control,
Hamilton–Jacobi–Bellman equation, neural operator, universal approximation, convergence rates}
\thanks{\textit{MSC2020 Subject Classification:} primary 47H20, 65M15, 68T07; secondary 41A81, 49M25, 65J08}

\date{\today}

\begin{document}

\begin{abstract}
	We approximate strongly continuous convex monotone semigroups by learning their Chernoff-type one-step operators with neural operators.\ First, we introduce the general class of so-called Chernoff-neural operators and show in a universal approximation theorem that they can approximate the Chernoff one-step operators arbitrarily well.\ By using stability estimates between weighted H\"older spaces, the one-step approximation error can be propagated through the iterations which yields universal approximation of the corresponding semigroup.\ Second, we introduce the more specialized class of envelope-neural operators for envelope semigroups which allows us to derive quantitative approximation rates.\ Finally, we illustrate the effectiveness of these neural operators in several numerical examples arising from non-linear partial differential equations, stochastic optimal control and stochastic processes under model uncertainty.
\end{abstract}
\vspace{-1.em}
\maketitle

\vspace{-1.em}
{
	\hypersetup{linkcolor=black}
	\tableofcontents
}
\vspace{-0.5cm}

\section{Introduction}
\label{sec:intro}

In this article, we show that strongly continuous convex monotone semigroups
on spaces of continuous functions can be approximated by iterating a sequence
of neural operators.\ Strongly continuous convex monotone semigroups have been
studied in a series of
articles~\cite{DKN20,nendel21,blessing23,kupper23,kupper25,denk25,blessing25,goldys24}.
Starting from a sequence $(I_n)_{n\in\N}$ of Chernoff-type one-step operators $I_n\colon\Ck\to\Ck$
and a sequence $(h_n)_{n\in\N}\subset (0,\infty)$ with $h_n\to 0$, one can construct a semigroup 
\begin{equation} \label{eq:intro1}
	S(t)f:=\lim_{n\to\infty}I_n^{k_n^t}f
	=\lim_{n\to\infty}\underbrace{(I_n\circ\cdots\circ I_n)}_{k^t_n \text{ times}}f,
\end{equation}
with $k_n^t:=\max\{k\in\N_0\colon kh_n\leq t\}$, whose infinitesimal generator is given by 
\begin{equation} \label{eq:intro1b}
	Af=I'(0)f:=\lim_{n\rightarrow\infty}\frac{I_n f-f}{h_n} \quad\text{for all } f\in\Cbi.
\end{equation}
The comparison principle in~\cite{denk25} guarantees that the semigroup $(S(t))_{t\geq 0}$
is uniquely determined by the infinitesimal behavior of the one-step operators
in equation~\eqref{eq:intro1b}.\

The framework of strongly continuous convex monotone semigroups covers various applications
such as stochastic optimal control, Hamilton--Jacobi--Bellman (HJB) equations,
large deviations and stochastic processes under model uncertainty,
see~\cite{DKN20,nendel21,kupper25,blessing23,goldys24,bartl2021,fuhrmann23,
	Kupper25RiskMeasuresWOT,kupper26,sgarabottolo24}.\ It is also worth mentioning
that convex monotone semigroups do not rely 
on the theory of viscosity solutions~\cite{CIL92,CL83,Lions82} but are consistent
with it, i.e., $u(t):=S(t)f$ is a viscosity solution of the HJB 
equation
\begin{equation} \label{eq:hjb}
	\begin{cases}
		\partial_t u(t)=Au(t), \quad t \in (0,\infty), \\
		\;\;\; u(0)=f,
	\end{cases}
\end{equation}
see~\cite[Theorem~6.3]{goldys24}.\ In particular, if the viscosity solution
is unique, the Chernoff-type approximation~\eqref{eq:intro1} can be seen 
as a monotone approximation scheme~\cite{barles91,CS08,BJ2005,BJ07,Krylov2000,DK05,Jiang2022} 
for the solution operator $f \mapsto u(t):=S(t)f$ of the HJB equation~\eqref{eq:hjb}.

In this paper, we approximate the strongly continuous convex monotone semigroup 
$(S(t))_{t\geq 0}$ by learning, for sufficiently large~$n\in\N$, the Chernoff-type 
one-step operator $I_n$ with a neural operator $\Phi_n$ on a sufficiently rich subset 
of $\C^\alpha_\kappa$ so that equation~\eqref{eq:intro1} yields
\begin{equation} \label{eq:intro2}
	S(t)f\approx I_n^{k_n^t}f\approx\Phi_n^{k_n^t}f \quad\text{for all } f\in\C^\alpha_\kappa,
\end{equation}
where $\C^\alpha_\kappa$ is a weighted H\"older space.\ First, we introduce the general
class of so-called \emph{Chernoff-neural operators}, which are based on the 
infinite-dimensional generalization of neural networks on weighted spaces in~\cite{cuchiero26}.\ 
These neural operators map the infinite-dimensional input $f\in\C^\alpha_\kappa$ 
via suitable hidden layer maps to the real hidden layer space, on which a scalar
non-linear activation function is applied, followed by the multiplication with 
linear readout functions to obtain an output in $\C^\alpha_\kappa$.\ By generalizing 
the weighted Stone--Weierstrass theorem of~\cite{cuchiero26} to the scale of weighted 
H\"older spaces, we establish a universal approximation theorem for the one-step 
operators $(I_n)_{n\in\N}$, see Theorem~\ref{thm:uat_chernoff}.\ The approximation 
error can then be propagated through the iterations to derive the approximation of 
the semigroup in equation~\eqref{eq:intro2}, see Theorem~\ref{thm:approx_chernoff}.

Second, we introduce the more specialized but structure-preserving class of so-called 
\emph{envelope-neural operators} if the sequence of one-step operators $(I_n)_{n\in\N}$
is of the form
\begin{equation} \label{eq:intro3}
	(I_n f)(x)=\sup_{\lambda\in\Lambda}\big((I_{n,\lambda}f)(x)-\eta_n(\lambda)h_n\big),
\end{equation}
where $\Lambda$ is a parameter set, $I_{n,\lambda}$ are linear operators and 
$\eta_n\colon\Lambda\to [0,\infty]$ are penalization functions.\ This framework covers, 
for instance, stochastic optimal control problems, HJB equations and transition semigroups 
of stochastic processes under model uncertainty.\ To make the supremum on the right-hand 
side of equation~\eqref{eq:intro3} tractable, we replace the supremum over $\Lambda$ 
by a maximum over finitely many trainable parameters $\lambda_1,\ldots,\lambda_M\in\Lambda$.\
Since finite maxima can be represented by ReLU networks, we define envelope-neural 
operators as compositions of ReLU networks with finite subcollections of the operators
$(I_{n,\lambda})_{\lambda \in \Lambda}$.\ Under suitable regularity assumptions on the
parameter dependence, we show that envelope-neural operators can approximate the one-step
operators $(I_n)_{n\in\N}$ arbitrarily well and subsequently propagate the resulting
one-step approximation error through the iterations, see Theorem~\ref{thm:uat_nisio}
and Theorem~\ref{thm:approx_nisio}.\ Moreover, we derive explicit approximation rates 
in terms of the size of the neural operator and combine them with the convergence rates 
of the one-step operator in \cite{blessing23} to obtain quantitative error bounds for 
the approximation of the strongly continuous convex monotone semigroup, see
Theorem~\ref{thm:rate_nisio} and Theorem~\ref{thm:approx_nisio_rate}.

Compared to the literature, our approach differs both from deep learning methods that
approximate the solution of a partial differential equation (PDE) for fixed initial 
data and from operator-learning methods that approximate the full solution operator
of the underlying PDE.\ For fixed initial data, our approach is closely related to 
deep splitting schemes~\cite{beck21,wu24} and, more broadly, to neural PDE solvers 
such as the deep BSDE method~\cite{han18}, the deep Galerkin method~\cite{sirignano18},
the deep Ritz method~\cite{e18}, physics-informed neural networks~\cite{raissi19}, 
deep backward dynamic programming~\cite{hure20} and random-feature models~\cite{nelsen21,gonon23,schmocker26},
see also~\cite{han16,grohs21,nakamura21,nusken21,zhou21,cai25} in the context of HJB equations.\
For varying initial data, our approach is related to operator-learning methods for PDEs 
such as Fourier neural operators~\cite{li20}, neural integral operators~\cite{stuart21}, 
DeepONets~\cite{lu21,lanthaler22}, generative equilibrium operators~\cite{kratsios25}
and kernel methods~\cite{ortega26}, see also \cite{lanthaler26} for HJB equations.\ 
These results rely on universal approximation theorems for neural operators between 
infinite-dimensional function spaces starting with~\cite{chen95,mhaskar97} and followed 
by results, e.g., on non-Euclidean domains~\cite{kratsios20,galimberti22,kratsios23}, 
Fr\'echet spaces~\cite{benth21}, and weighted infinite-dimensional spaces~\cite{cuchiero26,teichmann26}.

In contrast to these approaches, we do not approximate the solution operator 
$f\mapsto S(t)f$ directly but instead learn the Chernoff-type one-step operator 
$I_n$ with a neural operator $\Phi_n$ and recover $S(t)$ by iteration.\ 
Hence, the training of $\Phi_n$ only requires evaluations of $I_n$, often 
available in closed form, instead of supervised PDE solution data.\
We illustrate our approach in three numerical examples from non-linear partial 
differential equations, stochastic optimal control, and stochastic processes under
model uncertainty.\ While the Chernoff-neural operator provides a more general
architecture and is straightforward to evaluate, the envelope-neural operator
is particularly efficient in training, achieving accurate approximations with small architecture.

The rest of the article is organized as follows.\ In Section~\ref{sec:notation}, 
we introduce strongly continuous convex monotone semigroups.\
In Section~\ref{sec:cn}, we establish a universal approximation theorem for 
Chernoff-neural operators followed by the one for envelope-neural operators 
and their approximation rates in Section~\ref{sec:nisio}.\ Finally,
we present several numerical experiments in Section~\ref{sec:numerics} while
some proofs are given in Appendices~\ref{app:holder}--\ref{app:ass}.

\section{Setup and notation}
\label{sec:notation}

Let $\kappa\colon\Rd\to (0,\infty)$ be a bounded continuous function satisfying
\begin{equation} \label{eq:weight_cond}
	\sup_{x\in\Rd}\sup_{|y|\leq 1}\frac{\kappa(x)}{\kappa(x-y)}<\infty.
\end{equation}
Typical examples that we have in mind are polynomial weights $\kappa(x):=(1+|x|^q)^{-1}$
with $q \geq 0$. We define $\Ck$ as the space of all continuous functions 
$f\colon\Rd\to\R$ satisfying 
\[ \|f\|_\kappa:=\sup_{x\in\Rd}|f(x)|\kappa(x)<\infty. \]
An operator $S\colon\Ck\to\Ck$ is called convex if
$S(\lambda f+(1-\lambda)g)\leq\lambda Sf+(1-\lambda)Sg$ for all $\lambda\in [0,1]$ 
and $f,g\in\Ck$ and monotone if $Sf\leq Sg$ for all $f,g\in\Ck$ with $f\leq g$,
where $f\leq g$ means that $f(x)\leq g(x)$ for all $x\in\Rd$.\ Furthermore, 
we write $f_n\downarrow f$ if a sequence $(f_n)_{n\in\N}\subset\Ck$ decreases
pointwise to another function $f\in\Ck$ and define 
$B_{\Ck}(r):=\{f\in\Ck\colon\|f\|_\kappa\leq r\}$.\ Let $\Lipb$ be the space of all 
bounded Lipschitz functions $f\colon\Rd\to\R$ and denote by $\Lipb(r)$ the set of 
all $r$-Lipschitz functions $f\colon\Rd\to\R$ with $\|f\|_\infty\leq r$. Moreover, 
the space $\C^1_b$ (resp., $\Cbi$; resp. $\Cci$) consists of all bounded differentiable 
(resp., infinitely differentiable; resp. compactly supported infinitely differentiable) 
functions $f\colon\Rd\to\R$ such that all existing derivatives are bounded.

Throughout this article, we endow $\Ck$ with the mixed topology between $\|\cdot\|_\kappa$ 
and the topology of uniform convergence on compact sets, i.e., the strongest locally 
convex topology on $\Ck$ that coincides on $\|\cdot\|_\kappa$-bounded sets with the
topology of uniform convergence on compact subsets, see~\cite{wiweger61} for a detailed 
introduction. Choosing the mixed topology rather than the norm topology is crucial 
for the analysis of strongly continuous convex monotone semigroups, see~\cite{blessing23,denk25,kupper25}.\
Although the mixed topology is not metrizable, for convex monotone operators $S\colon\Ck\to\Ck$, 
continuity w.r.t.\ the mixed topology is equivalent to sequential continuity and to continuity 
from above, see~\cite{delbaen23,nendel25}. Moreover, a sequence $(f_n)_{n\in\N}\subset\Ck$ 
converges to $f\in\Ck$ if and only if 
\[ \sup_{n\in\N}\|f_n\|_\kappa<\infty \quad\mbox{and}\quad \lim_{n\to\infty}\|f-f_n\|_{\infty,K}=0 \]
for all compact subsets $K\Subset\Rd$, where $\|f\|_{\infty,K}:=\sup_{x\in K}|f(x)|$,
see~\cite[Proposition~A.4]{goldys24}. If not stated otherwise, all limits in $\Ck$ are
taken w.r.t.\ the mixed topology and compact subsets are denoted by $K\Subset\Rd$.\ 
The mixed topology can also be seen as a strict topology, 
see~\cite{fremlin72,Kunze09,sentilles72}.

In order to derive a universal approximation result, we further introduce weighted 
H\"older spaces as follows. For every $\alpha \in [0,1]$, we denote by $\C^\alpha_\kappa$ 
the space of all functions $f\colon\Rd\to\R$ satisfying
\[ \|f\|_{\alpha,\kappa}:=\max\bigg\{\sup_{x\in\Rd}|f(x)|\kappa(x),\,
\sup_{x\neq y}\frac{|f(x)-f(y)|}{|x-y|^\alpha}\bar{\kappa}(x,y)\bigg\}<\infty, \]
where $\bar{\kappa}(x,y) := \frac{2\kappa(x)\kappa(y)}{\kappa(x)+\kappa(y)}=\frac{2}{\kappa(x)^{-1}+\kappa(y)^{-1}}$ satisfies $\min(\kappa(x),\kappa(y))\leq\bar{\kappa}(x,y)\leq 2\min(\kappa(x),\kappa(y))$. By Theorem~\ref{thm:hol_ban}, the space $(\C^\alpha_\kappa,\|\cdot\|_{\alpha,\kappa})$ 
is a Banach space satisfying $\C^1_\kappa=\Lipb$ if $\kappa\equiv 1$. Moreover, 
since the norms $\|\cdot\|_{0,\kappa}$ and $\|\cdot\|_\kappa$ are equivalent, 
we have $\C^0_\kappa=\Ck$ as sets, but we keep the notation $\C^0_\kappa$ to 
distinguish the norm topology from the mixed topology on $\Ck$.\ In addition,
Proposition~\ref{thm:hol_cont} guarantees that the embedding 
$\C^\alpha_\kappa\hookrightarrow\C^{\alpha'}_{\kappa'}$ is continuous if 
$0\leq\alpha'\leq\alpha\leq 1$ and $\kappa'\lesssim\kappa$, where $\kappa'\lesssim\kappa$ 
means that $\kappa'\leq c\kappa$ for some constant $c\geq 0$.\ By Theorem~\ref{thm:hol_cpt},
the embedding $\C^\alpha_\kappa\hookrightarrow\C^{\alpha'}_{\kappa'}$ is even compact if 
$0\leq\alpha'<\alpha\leq 1$ and $\kappa'\llsim\kappa$, where $\kappa'\llsim\kappa$ means 
that $\lim_{|x|\to\infty}\frac{\kappa'(x)}{\kappa(x)}=0$. Moreover, we denote by 
$\c^\alpha_\kappa$ the little weighted H\"older space consisting of all functions 
$f\in\C^\alpha_\kappa$ with
\[ \lim_{|x|\to\infty}|f(x)|\kappa(x) = 0 \quad\text{and}\quad
\lim_{\delta\to 0}\sup_{x\neq y,\atop |x-y|<\delta}\frac{|f(x)-f(y)|}{|x-y|^\alpha}\bar{\kappa}(x,y)=0. \]
Note that $\c^\alpha_\kappa\subset\C^\alpha_\kappa$ is a closed linear subspace and 
therefore itself a Banach space.\ In particular, the embedding 
$\C^\alpha_\kappa\hookrightarrow\c^{\alpha'}_{\kappa'}$ is continuous and dense if 
$0\leq\alpha'<\alpha\leq 1$ and $\kappa'\llsim\kappa$, see Theorem~\ref{thm:hol_cont} 
and Theorem~\ref{thm:hol_dense}. We define 
$B_{\C^\alpha_\kappa}(r):=\{f\in \C^\alpha_\kappa\colon\|f\|_{\alpha,\kappa}\leq r\}$
for all $r\geq 0$.

Throughout this article, we also use the following notations.\ Let 
$\R_+:=\{x\in\R\colon x\geq 0\}$.\ We define $B_{\Rd}(r):=\{x\in\Rd\colon |x|\leq r\}$
for all $r\geq 0$, where $|\cdot|$ is the Euclidean norm.\ The transpose of 
a vector $x\in\Rd$ is denoted by $x^\top$, compact subsets are denoted by 
$K\Subset\Rd$ and the translation of a function $f\colon\Rd\to\R$ by a vector
$x\in\Rd$ is denoted by $(\tau_x f)(y):=f(x+y)$ for all $y\in\Rd$.\ The set~$\M_\kappa$
consists of all signed Borel measures $\mu$ with $\int_{\Rd}\frac{1}{\kappa(x)}\,\vert\mu\vert(\d x)<\infty$.\
Moreover, for topological spaces $X,Y$, the space $\Cb(X;Y)$ consists of all uniformly
bounded continuous functions $f\colon X\to Y$, while $\C(X)$ consists of all
continuous functions $f\colon X\to\R$.\ For a Banach space~$X$, we denote by $\C^1(X)$
the space of all continuously differentiable functions $f\colon X\to\R$. If $(X,d)$ 
is a metric space, we define $\mathring{B}_d(x,r):=\{y\in X\colon d(x,y)<r\}$ 
for all $r>0$ and $x\in X$. Finally, the set $\mathbb{S}^d_+\subset\R^{d\times d}$ 
consists of all symmetric positive semi-definite matrices.

\subsection{Strongly continuous convex monotone semigroups}

In this section, we recall the results from~\cite[Section~5]{denk25} 
and~\cite[Section~2]{blessing23} about Chernoff-type approximations of strongly 
continuous convex monotone semigroups.

\begin{definition} \label{def:semigroup}
	A family $(S(t))_{t\geq 0}$ of operators $S(t)\colon\Ck\to\Ck$ is called 
	\emph{strongly continuous convex monotone semigroup} if the following conditions are satisfied:
	\begin{enumerate}
		\item $S(t)$ is convex and monotone with $S(t) f_n\downarrow 0$ 
		for all $t\geq 0$ and $f_n\downarrow 0$.
		\item $S(0)f=f$ and $S(s+t)f=S(s)S(t)f$ for all $s,t\geq 0$ and $f\in\Ck$.
		\item $\sup_{t\in [0,T]}\|S(t)\frac{r}{\kappa}\|_\kappa<\infty$
		for all $r,T\geq 0$.
		\item $f=\lim_{t\downarrow 0}S(t)f$ for all $f\in\Ck$.
	\end{enumerate}
	Furthermore, the generator of the semigroup is defined by
	\[ A\colon D(A)\to\Ck,\; f\mapsto\lim_{h\downarrow 0}\frac{S(h)f-f}{h}, \]
	where the domain consists of all $f\in\Ck$ such that the previous limit exists.
\end{definition}

Let $(I_n)_{n\in\N}$ be a sequence of operators $I_n\colon\Ck\to\Ck$ and 
$(h_n)_{n\in\N}\subset (0,\infty)$ be a sequence with $h_n\to 0$. For every $t\geq 0$,
$n\in\N$ and $f\in\Ck$, we define 
\[ I(\pi^t_n)f:= I_n^{k^t_n}f=\underbrace{(I_n\circ\cdots\circ I_n)}_{k^t_n \text{ times}}f, \]
where $k^t_n:=\max\{k\in\N_0\colon kh_n\leq t\}$ and $\pi^t_n:=\{0,h_n,\ldots,k_n^t h_n\}$.\
In addition, for every $f\in\Ck$ such that the following limit exists, we define
\[ I'(0)f:=\lim_{n\to\infty}\frac{I_n f-f}{h_n}\in\Ck. \]
The following conditions guarantee that the sequence of iterated operators converges
to a strongly continuous convex monotone semigroup which is uniquely determined by
the infinitesimal behavior of the one-step operators.

\begin{assumption} \label{ass:chernoff}
	Suppose that the following conditions are satisfied:
	\begin{enumerate}
		\item $I_n$ is convex and monotone with $I_n 0=0$ for all $n\in\N$.
		\item There exists $\omega\geq 0$ with 
		$\|I_n f-I_n g\|_\kappa\leq e^{\omega h_n}\|f-g\|_\kappa$ for all $f,g\in\Ck$ and $n\in\N$. 
		\item For every $\epsilon>0$, $r,T\geq 0$, and $K\Subset\Rd$ there exist
		$c\geq 0$ and $K'\Subset\Rd$ with 
		\[ \|I(\pi^t_n)f-I(\pi^t_n)g\|_{\infty,K}\leq c\|f-g\|_{\infty,K'}+\epsilon \]
		for all $t\in [0,T]$, $n\in\N$ and $f,g\in B_{\Ck}(r)$.
		\item For every $\epsilon>0$, there exists $\delta>0$ and $n_0\in\N$ with
		\[ I_n (\tau_x f)\leq\tau_x I_n f+\frac{r\epsilon h_n}{\kappa} \]
		for all $r\geq 0$, $f\in\Lipb(r)$, $x\in B_{\Rd}(\delta)$ and $n\geq n_0$.
		\item It holds $I_n\colon\Lipb(r)\to\Lipb(e^{\omega h_n}r)$ for all $r\geq 0$ and $n\in\N$. 
		\item The limit $I'(0)f\in\Ck$ exists for all $f\in\Cbi$.
	\end{enumerate}
\end{assumption}

It follows from~\cite[Lemma~C.2]{denk25} that condition~(iii) is equivalent to
the following: for every $T\geq 0$, $K\Subset\Rd$ and $(f_k)_{k\in\N}\subset\Ck$ 
with $f_k\downarrow 0$, it holds 
\[ \sup_{(t,x)\in [0,T]\times K}\sup_{n\in\N}\big(I(\pi^t_n)f_k\big)(x)\downarrow 0. \]
Moreover, we refer to~\cite[Section~2.4]{blessing23} for sufficient conditions
on the one-step operators~$I_n$ which guarantee that condition~(vi) is satisfied
and which can easily be verified in applications.\ The next theorem follows immediately 
from~\cite[Theorem~2.11 and Corollary~2.15]{blessing23}.

\begin{theorem} \label{thm:chernoff}
	Suppose that Assumption~\ref{ass:chernoff} is satisfied. Then, there exists a strongly 
	continuous convex monotone semigroup $(S(t))_{t \geq 0}$ on $\Ck$ with generator 
	$A\colon D(A)\to\Ck$ given by
	\[ S(t)f:=\lim_{n\to\infty}I(\pi_n^t)f \quad\text{for all } t\geq 0 \text{ and } f\in\Ck. \]
	In addition, the following statements are true:
	\begin{enumerate}
		\item It holds $f\in D(A)$ and $Af=I'(0)f$ for all $f\in\Ck$ such that $I'(0)f\in\Ck$ exists.
		In particular, this is valid for all $f\in\Cbi$.
		\item It holds $\|S(t)f-S(t)g\|_\kappa\leq e^{\omega t}\|f-g\|_\kappa$ for all $t\geq 0$
		and $f,g\in\Ck$.
		\item For every $\epsilon>0$, $r,T\geq 0$ and $K\Subset\Rd$, there exists $c\geq 0$ and 
		$K'\Subset\Rd$ with 
		\[ \|S(t)f-S(t)g\|_{\infty,K}\leq c\|f-g\|_{\infty,K'} + \epsilon \]
		for all $t\in [0,T]$ and $f,g\in B_{\Ck}(r)$.
		\item For every $\epsilon>0$, there exists $\delta>0$ with
		\[ S(t)(\tau_x f)\leq\tau_x S(t)f+\frac{e^{\omega t}r\epsilon t}{\kappa} \]
		for all $r,t\geq 0$, $f\in\Lipb(r)$ and $x\in B_{\Rd}(\delta)$.
		\item It holds $S(t)\colon\Lipb(r)\to\Lipb(e^{\omega t}r)$ for all $r,t\geq 0$.
	\end{enumerate}
	Furthermore, let $(T(t))_{t\geq 0}$ be another strongly continuous convex monotone 
	semigroup on~$\Ck$ with generator $B\colon D(B)\to\Ck$ satisfying the conditions~(iv)
	and~(v), $\Cbi\subset D(B)$ and 
	\[ Af=Bf \quad\text{for all } f\in\Cbi. \]
	Then, it holds $S(t)f=T(t)f$ for all $t\geq 0$ and $f\in\Ck$. 
\end{theorem}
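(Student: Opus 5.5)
The plan is to reduce the statement to \cite[Theorem~2.11 and Corollary~2.15]{blessing23}, as the paper indicates, and to check that Assumption~\ref{ass:chernoff} contains exactly the hypotheses those results require. Existence goes in four steps.
\begin{enumerate}
\item Use condition~(ii) to get the Lipschitz bound $\|I(\pi^t_n)f-I(\pi^t_n)g\|_\kappa\leq e^{\omega k^t_n h_n}\|f-g\|_\kappa\leq e^{\omega t}\|f-g\|_\kappa$ by iteration. Together with $I_n0=0$, this bounds $\|I(\pi^t_n)f\|_\kappa$ uniformly in $n$ and $t\in[0,T]$.
\item Combine this with condition~(iii), which gives local equicontinuity on compacts, and with conditions~(iv)--(v), which give equi-Lipschitz bounds and the approximate translation invariance of iterates. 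An Arzel\`a--Ascoli/diagonal argument over a countable dense set of times and a countable family of functions then yields a subsequential limit $S(t)f$ in the mixed topology.
\item Use condition~(vi) to identify the generator on $\Cbi$ as $I'(0)$. Statement~(i) comes from a Chernoff-type estimate of the form $\|I(\pi^t_n)f-f-\int_0^t I(\pi^s_n)I'(0)f\,\d s\|$ tending to zero on compacts.
\item Use the comparison principle of \cite{denk25} for the uniqueness part. It applies to semigroups satisfying~(iv),~(v) whose generators agree on $\Cbi$. Since any two subsequential limits agree, the full sequence converges, which gives the representation $S(t)f=\lim_n I(\pi^t_n)f$.
\end{enumerate}

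The semigroup properties in Definition~\ref{def:semigroup} pass to the limit as follows. Convexity and monotonicity are preserved under pointwise limits. The bound~(iii) follows from step~(i). Continuity from above follows from the reformulation of condition~(iii) via \cite[Lemma~C.2]{denk25}: the quantity $\sup_n I(\pi^t_n)f_k$ decreases to $0$ uniformly on $[0,T]\times K$. The semigroup law $S(s+t)=S(s)S(t)$ is the delicate point, because $k^{s+t}_n\neq k^s_n+k^t_n$ in general. I would first prove it for dyadic times along $h_n$-compatible grids and then extend by strong continuity in $t$ together with the equicontinuity from~(iii).

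Statements~(ii)--(v) come from taking limits of the corresponding uniform-in-$n$ estimates for $I(\pi^t_n)$.
\begin{itemize}
\item Statement~(ii) follows from step~(i), using lower semicontinuity of $\|\cdot\|_\kappa$ under mixed convergence.
\item Statement~(iii) is inherited directly from condition~(iii).
\item Statement~(v) follows from iterating condition~(v).
\item Statement~(iv) follows by iterating condition~(iv) $k^t_n$ times, using monotonicity and~(ii). Each step adds at most $e^{\omega h_n}r\epsilon h_n/\kappa$ with growing Lipschitz constant, so the errors sum to at most $e^{\omega t}r\epsilon t/\kappa$.
\end{itemize}

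The main obstacle is the identification of the generator together with uniqueness. This is the step where the comparison principle of \cite{denk25} is essential, and I would invoke it as a black box rather than reprove it.
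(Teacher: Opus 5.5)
Your top-level move, quoting \cite[Theorem~2.11 and Corollary~2.15]{blessing23}, is exactly the paper's proof, which consists of nothing more than that citation. Your limit passages are also fine: for statements (ii)--(v), for convexity, monotonicity and continuity from above, and for $\sup_{t\le T}\|S(t)\frac{r}{\kappa}\|_\kappa<\infty$.

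The outline you give as the content of the reduction, however, fails at step~3. The estimate $I(\pi^t_n)f-f\approx\int_0^t I(\pi^s_n)I'(0)f\,\d s$ is the linear Dynkin formula. For convex operators, $I_n^{j+1}f-I_n^jf=I_n^j(f+h_ng_n)-I_n^jf$ with $g_n:=(I_nf-f)/h_n$ is not $h_nI_n^jg_n$.

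A concrete counterexample: take $d=1$, $\kappa\equiv1$ and $(I_nf)(x):=\sup_{|\lambda|\le1}f(x+\lambda h_n)$. These satisfy Assumption~\ref{ass:chernoff} with $\omega=0$ and $I'(0)f=|f'|$, and $(I(\pi^s_n)f)(x)=\sup_{|y|\le k^s_nh_n}f(x+y)$. For $f=\arctan$ and $x=0$ one has $\int_0^t(I(\pi^s_n)|f'|)(0)\,\d s=t$ for every $n$, whereas $(I(\pi^t_n)f)(0)-f(0)\to\arctan t$. So your quantity tends to $t-\arctan t\neq0$.

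What replaces it is convexity of $I_n^j$: for fixed $\lambda>0$ and $h_n\le\lambda$,
\[ \frac{h_n}{\lambda}\big(I_n^jf-I_n^j(f-\lambda g_n)\big)\le I_n^{j+1}f-I_n^jf\le\frac{h_n}{\lambda}\big(I_n^j(f+\lambda g_n)-I_n^jf\big). \]
Sum this over $j<k^t_n$ and combine it with three facts:
\begin{itemize}
\item $I_n^ju-u\to0$ on compacts, uniformly in $n$ and in $jh_n\le t$, as $t\downarrow0$; this follows from (vi) on $\Cbi$, extended by (iii) and density;
\item (iii), which lets you replace $g_n$ by $I'(0)f$;
\item $\|I_nf-f\|_\kappa=\cO(h_n)$.
\end{itemize}
This yields $\frac{S(t)f-f}{t}\to I'(0)f$ as $t\downarrow0$. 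It controls only small times, which is all the generator needs.

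Second, proving the semigroup law first ``for dyadic times along $h_n$-compatible grids'' has no basis for a general sequence $h_n\to0$. For $h_n:=1/(n+\sqrt2)$, no $t>0$ lies on two different grids $h_n\N_0$. The right observation is $k^{s+t}_n-k^s_n-k^t_n\in\{0,1\}$. So $I(\pi^{s+t}_n)f$ and $I(\pi^s_n)I(\pi^t_n)f$ differ by at most one factor $I_n$. That factor is negligible because $I_ng\to g$ in the mixed topology for every $g\in\Ck$, by (vi) on $\Cbi$, then (iii) and density. Condition~(iii) then lets you replace $I(\pi^t_n)f$ by $S(t)f$ inside $I(\pi^s_n)$, for all $s,t$ at once.

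Relatedly, you use strong continuity without deriving it. The equicontinuity in time needed in step~2, to pass from countably many times to all $t$, does not come from (iii)--(v); those only control the dependence on the input and on $x$. It comes from (vi): mixed convergence of $(I_nf-f)/h_n$ forces $\|I_nf-f\|_\kappa\le ch_n$. Hence $\|I(\pi^t_n)f-I(\pi^s_n)f\|_\kappa\le c'(|t-s|+h_n)$ for $f\in\Cbi$ and $s,t\in[0,T]$.
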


\begin{corollary} \label{cor:chernoff}
	Let Assumption~\ref{ass:chernoff} be satisfied and denote by $(S(t))_{t\geq 0}$
	the strongly continuous convex monotone semigroup from Theorem~\ref{thm:chernoff}.\
	Assume that there exists $\alpha\in (0,1]$ and $\tilde{\omega}\geq 0$ with
	\[ \|I_n f\|_{\alpha,\kappa}\leq e^{\tilde{\omega}h_n}\|f\|_{\alpha,\kappa} 
	\quad\text{for all } n\in\N \text{ and } f\in \C^\alpha_\kappa. \]
	Then, it holds $\|S(t)f\|_{\alpha,\kappa}\leq e^{\tilde{\omega}t}\|f\|_{\alpha,\kappa}$ 
	for all $t\geq 0$ and $f \in \C^\alpha_\kappa$. Furthermore, 
	\[ \lim_{n\to\infty}\sup_{f\in B_{\C^\alpha_\kappa}(r)}\sup_{t\in [0,T]}
	\|S(t)f-I(\pi_n^t)f\|_{\alpha',\kappa'}=0 \]
	for all $r,T\geq 0$, $\alpha' \in [0,\alpha)$ and $\kappa' \llsim \kappa$.
\end{corollary}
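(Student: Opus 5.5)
Iterating the hypothesis gives the Hölder bound for the approximating scheme. Then we pass to the limit using lower semicontinuity of the Hölder norm under pointwise convergence. Finally, we upgrade convergence in the mixed topology to convergence in $\|\cdot\|_{\alpha',\kappa'}$ via the compact embedding of Theorem~\ref{thm:hol_cpt}.

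\emph{Step 1 (uniform Hölder bound).} For $f\in\C^\alpha_\kappa$ and $k\in\N_0$, induction on the assumed estimate yields $\|I_n^kf\|_{\alpha,\kappa}\le e^{\tilde\omega kh_n}\|f\|_{\alpha,\kappa}$. With $k=k_n^t$ and $k_n^th_n\le t$ this gives $\|I(\pi_n^t)f\|_{\alpha,\kappa}\le e^{\tilde\omega t}\|f\|_{\alpha,\kappa}$.

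By Theorem~\ref{thm:chernoff}, $I(\pi_n^t)f\to S(t)f$ in the mixed topology, hence pointwise. Both quantities in the definition of $\|\cdot\|_{\alpha,\kappa}$ are suprema of expressions that are continuous under pointwise convergence: $|g(x)|\kappa(x)$ and $|g(x)-g(y)|\bar\kappa(x,y)/|x-y|^\alpha$. They are therefore lower semicontinuous, and $\|S(t)f\|_{\alpha,\kappa}\le e^{\tilde\omega t}\|f\|_{\alpha,\kappa}$ follows.

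\emph{Step 2 (uniform convergence in the mixed topology).} This is the main obstacle, since Theorem~\ref{thm:chernoff} only gives convergence for each fixed $t$ and $f$. Fix $r,T$ and set $\mathcal{K}:=B_{\C^\alpha_\kappa}(r)$. By Step~1, all functions $S(t)f$ and $I(\pi_n^t)f$ with $f\in\mathcal{K}$, $t\in[0,T]$ lie in $B_{\C^\alpha_\kappa}(e^{\tilde\omega T}r)$, which is bounded in $\|\cdot\|_\kappa$ and equi-Hölder on compacts.

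I would argue by contradiction. Suppose there are $\epsilon>0$, a compact $K$, and $f_n\in\mathcal{K}$, $t_n\in[0,T]$ with $\|S(t_n)f_n-I(\pi_n^{t_n})f_n\|_{\infty,K}\ge\epsilon$ along a subsequence. By Arzelà–Ascoli on compacts (using Theorem~\ref{thm:hol_cpt}), pass to a further subsequence with $f_n\to f$ in the mixed topology and $t_n\to t$, where $f\in\mathcal{K}$ by lower semicontinuity.

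Next, use Theorem~\ref{thm:chernoff}(iii) and Assumption~\ref{ass:chernoff}(iii) to replace $f_n$ by $f$ in both terms, at cost $c\|f_n-f\|_{\infty,K'}+\epsilon/4$. It then remains to show $S(t_n)f-I(\pi_n^{t_n})f\to0$ uniformly on $K$. The term $S(t_n)f\to S(t)f$ by strong continuity, since $S(\cdot)f$ is continuous on $[0,T]$ in the mixed topology. For the other term I expect to need the uniform-in-$t$ convergence on compacts already contained in \cite[Theorem~2.11]{blessing23}, where $S$ is constructed as a limit uniform on $[0,T]$. If that is unavailable, I would instead split $t_n$ into $t$ plus a small time, compare $I(\pi_n^{t_n})f$ with $I(\pi_n^{t})f$ through the semigroup-type estimate $\|I_n^jg-g\|$ for small $jh_n$, and exploit the Hölder regularity of $f$. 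This would give $\sup_{f\in\mathcal K,t\le T}\|S(t)f-I(\pi_n^t)f\|_{\infty,K}\to0$ for every compact $K$.

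\emph{Step 3 (upgrade to $\|\cdot\|_{\alpha',\kappa'}$).} The differences $g_{n,t,f}:=S(t)f-I(\pi_n^t)f$ lie in $B_{\C^\alpha_\kappa}(2e^{\tilde\omega T}r)$. For $\alpha'<\alpha$ and $\kappa'\llsim\kappa$, interpolation gives
\[ \sup_{x\ne y}\frac{|g(x)-g(y)|}{|x-y|^{\alpha'}}\bar\kappa'(x,y)\le \|g\|_{\alpha,\kappa}^{\alpha'/\alpha}(2\|g\|_{\infty,K_R}\sup\kappa')^{1-\alpha'/\alpha}+C\sup_{|x|\ge R}\frac{\kappa'(x)}{\kappa(x)}\|g\|_{\alpha,\kappa}, \]
obtained by treating $x,y$ both in a large ball $K_R$ separately from the case where one lies outside. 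An analogous estimate holds for the sup part. Choosing $R$ large first and then $n$ large via Step~2 makes this uniformly small, which is exactly the compactness mechanism behind Theorem~\ref{thm:hol_cpt}.
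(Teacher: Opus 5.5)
Your Step~1 is fine. Step~3 is a valid, more hands-on substitute for the paper's argument that the identity $(\mathcal{F},\|\cdot\|_{\alpha',\kappa'})\to(\mathcal{F},\|\cdot\|_{\kappa'})$ is a homeomorphism on a compact set. Your displayed bound needs an extra term for pairs with one point outside $K_R$ and $|x-y|>1$, but every term still vanishes if you choose $R$ first and $n$ afterwards.

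The gap is in Step~2, at exactly the point you flag and then leave open. After replacing $f_n$ by $f$, you must show $I(\pi_n^{t_n})f\to S(t)f$ uniformly on $K$ whenever $t_n\to t$, and neither of your routes establishes this. Theorem~\ref{thm:chernoff} only gives convergence for each fixed $t$, so appealing to a uniform-in-time version of \cite[Theorem~2.11]{blessing23} is an unverified assumption. Your fallback also names the wrong tool: H\"older regularity of $f$ controls spatial increments, not $I_n^jf-f$ for small $jh_n$. What you actually need is time equicontinuity of $t\mapsto I(\pi_n^t)f$, uniformly in large $n$. This is the same property the paper uses when it picks finitely many times $t_1,\ldots,t_M$ that work simultaneously for $S(\cdot)f_j$ and $I(\pi_n^{\cdot})f_j$.

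This property follows from Assumption~\ref{ass:chernoff}(i)--(iii) and~(vi), as follows.
\begin{enumerate}
\item For $s\leq t$ in $[0,T]$ the powers of $I_n$ commute, so $I(\pi_n^t)f=I(\pi_n^s)I_n^jf$ with $j:=k_n^t-k_n^s$. Here $jh_n\leq t-s+h_n$ and $\|I_n^jf\|_\kappa\leq e^{\omega jh_n}\|f\|_\kappa$.
\item Hence Assumption~\ref{ass:chernoff}(iii) reduces the claim to $\|I_n^jf-f\|_{\infty,K'}\to 0$ whenever $jh_n\to 0$, for the single function $f$.
\item Approximate $f$ in the mixed topology by some $g\in\Cbi$ with $\|g\|_\kappa$ bounded (cut off and mollify, using \eqref{eq:weight_cond}). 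Control $I_n^jf-I_n^jg$ once more by Assumption~\ref{ass:chernoff}(iii).
\item For $g$ itself, telescope:
\[ \|I_n^jg-g\|_\kappa\leq\sum_{i=0}^{j-1}\|I_n^i(I_ng)-I_n^ig\|_\kappa\leq jh_ne^{\omega jh_n}\sup_{m\in\N}\Big\|\frac{I_mg-g}{h_m}\Big\|_\kappa. \]
The supremum is finite because $\frac{I_mg-g}{h_m}$ converges in the mixed topology by Assumption~\ref{ass:chernoff}(vi), and mixed-convergent sequences are $\|\cdot\|_\kappa$-bounded.
\end{enumerate}
Combined with $I(\pi_n^t)f\to S(t)f$ at the fixed limit time $t$, this gives $I(\pi_n^{t_n})f\to S(t)f$ on compacts along your subsequence, and your contradiction argument then closes.
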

\begin{proof}
	For every $t\geq 0$ and $f\in\C^\alpha_\kappa$, it follows by induction that
	\begin{equation} \label{eq:cor:chernoff0a}
		\|I(\pi_n^t)f\|_{\alpha,\kappa}\leq e^{\tilde{\omega} t}\|f\|_{\alpha,\kappa}
		\quad\text{for all } n\in\N,
	\end{equation}
	which, by taking the limit, implies
	\begin{equation} \label{eq:cor:chernoff0b}
		\|S(t)f\|_{\alpha,\kappa}\leq e^{\tilde{\omega} t}\|f\|_{\alpha,\kappa}.
	\end{equation}
	Now, let $r,T\geq 0$, $\alpha'\in [0,\alpha)$, $\kappa'\llsim\kappa$ and $\epsilon'>0$.\
	Since $\kappa'\llsim\kappa$, Assumption~\ref{ass:chernoff}(iii) and Theorem~\ref{thm:chernoff}(iii)
	imply that there exists $\delta>0$ with 
	\begin{equation} \label{eq:cor:chernoff1}
		\|S(t)f-S(t)g\|_{\kappa'}<\tfrac{\epsilon'}{5} \quad\text{and}\quad 
		\|I(\pi_n^t)f-I(\pi_n^t)g\|_{\kappa'}<\tfrac{\epsilon'}{5}
	\end{equation}
	for all $t\in [0,T]$, $n\in\N$ and $f,g\in B_{\Ck}(r)$ with $\|f-g\|_{\kappa'}<\delta$.\
	By Theorem~\ref{thm:hol_cpt}, there exist $f_1,\ldots,f_N\in B_{\C^\alpha_\kappa}(r)$ 
	such that, for every $f\in B_{\C^\alpha_\kappa}(r)$, there exists $j\in\{1,\ldots,N\}$ with
	\begin{equation} \label{eq:cor:chernoff2}
		\|f-f_j\|_{\kappa'}<\delta. 
	\end{equation}
	In addition, by using $\kappa'\llsim\kappa$ and the strong continuity, there exist
	$t_1,\ldots,t_M\in [0,T]$ such that, for every $t\in [0,T]$, there exists $i\in\{1,\ldots,M\}$ with
	\begin{equation} \label{eq:cor:chernoff3}
		\|S(t)f_j-S(t_i)f_j\|_{\kappa'}<\tfrac{\epsilon'}{5} \quad\text{and}\quad
		\|I(\pi_n^t)f_j-I(\pi_n^{t_i})f_j\|_{\kappa'}<\tfrac{\epsilon'}{5} \quad\text{for all } j\in\{1,\ldots,N\}.
	\end{equation}
	Theorem~\ref{thm:hol_cpt} yields the existence of a subsequence $(n_l)_{l\in\N}\subset\N$ 
	and functions $g_{ij}\in\C^\alpha_\kappa$ with 
	\[ \lim_{l\to\infty}\|I(\pi_{n_l}^{t_i})f_j-g_{ij}\|_{\kappa'}=0
	\quad\text{for all } i\in\{1,\ldots,M\} \text{ and } j\in\{1,\ldots,N\}. \]
	Since $I(\pi_{n_l}^{t_i})f_j\to S(t_i)f_j$ in the mixed topology, we obtain $g_{ij}=S(t_i)f_j$.\
	In particular, the limit does not depend on the choice of the convergent subsequence, whence
	there exists $n_0\in\N$ with 
	\begin{equation} \label{eq:cor:chernoff4}
		\|I(\pi_n^{t_i})f_j-S(t_i)f_j\|_{\kappa'}<\tfrac{\epsilon'}{5}
		\quad\text{for all } i\in\{1,\ldots,M\}, \, j\in\{1,\ldots,N\} \text{ and } n\geq n_0.
	\end{equation}
	For every $t\in [0,T]$, $f\in B_{\C^\alpha_\kappa}(r)$ and $n\geq n_0$, 
	the inequalities~\eqref{eq:cor:chernoff1}-\eqref{eq:cor:chernoff4} imply
	\begin{align*}
		\|I(\pi_n^{t})f-S(t)f\|_{\kappa'}
		&\leq\|I(\pi_n^{t})f-I(\pi_n^t)f_j\|_{\kappa'}+\|I(\pi_n^t)f_j-I(\pi_n^{t_i})f_j\|_{\kappa'} \\
		&\quad\; +\|I(\pi_n^{t_i})f_j-S(t_i)f_j\|_{\kappa'}+\|S(t_i)f_j-S(t)f_j\|_{\kappa'} \\
		&\quad\; +\|S(t)f_j-S(t)f\|_{\kappa'} \\
		&<\epsilon'.
	\end{align*}
	In order to derive the convergence w.r.t. $\Vert \cdot \Vert_{\alpha',\kappa'}$, we observe that
	\[ \mathcal{F}:=\overline{\mathcal{E}}^{\|\cdot\|_{\alpha',\kappa'}}\cup\{0\} 
	\quad\text{with}\quad 
	\mathcal{E}:=\big\{I(\pi^t_n)f-S(t)f\colon t\in [0,T], n\in\N, f\in B_{\C^\alpha_\kappa}(r)\big\} \]
	is bounded in $\C^\alpha_\kappa$ due to the inequalities~\eqref{eq:cor:chernoff0a}
	and~\eqref{eq:cor:chernoff0b} and therefore compact in $\C^{\alpha'}_{\kappa'}$ by 
	Theorem~\ref{thm:hol_cpt}.\ Since 
	$(\mathcal{F},\|\cdot\|_{\alpha',\kappa'})\hookrightarrow (\mathcal{F},\|\cdot\|_{\kappa'})$
	is a continuous bijection from a compact space into a Hausdorff space, it is a homeomorphism.
	Hence, for every $\epsilon>0$, there exists $\epsilon'>0$ such that, for every $g\in\mathcal{F}$, 
	we have
	\[ \|g\|_{\kappa'}<\epsilon' \quad \Rightarrow \quad \|g\|_{\alpha',\kappa'}<\epsilon. \]
	Combining this estimate with the first part of the proof yields the claim.
\end{proof}

\section{Chernoff-neural operators}
\label{sec:cn}

In this section, we introduce a very general neural operator architecture to approximate 
the one-step operators $(I_n)_{n\in\N}$ from the previous section. In particular, by learning the 
one-step operators uniformly over different functions, we can then iterate the neural 
operators to approximate the corresponding strongly continuous convex monotone semigroup
$(S(t))_{t\geq 0}$.

\subsection{Universal approximation}
\label{sec:uat}

Let $\alpha\in (0,1]$, $p \in (1,\infty)$ and $\kappa,\kappa_0\colon\Rd\to (0,\infty)$ be bounded
continuous functions satisfying $\kappa_0\llsim\kappa$.\ Furthermore, we fix
a sequence $(I_n)_{n\in\N}$ of operators $I_n\colon\Ck\to\Ck$ and a sequence 
$(h_n)_{n\in\N}\subset (0,\infty)$ with $h_n\to 0$ such that Assumption~\ref{ass:chernoff}
is valid.

\begin{assumption} \label{ass:cn_uat}
	The operators $(I_n)_{n\in\N}$ can be extended to operators $I_n\colon\C_{\kappa_0}\to\C_{\kappa_0}$.\
	Furthermore, there exists a set~$\K$ of bounded continuous functions $\kappa'\colon\Rd\to (0,\infty)$
	with $\kappa_0\lesssim\kappa'\lesssim\kappa$ such that $\kappa\in\K$ and the following conditions are satisfied:
	\begin{enumerate}
		\item For every $\kappa_1,\kappa_2\in\K$ with $\kappa_1\llsim\kappa_2$,
		there exists $\kappa_3\in\K$ with $\kappa_1\llsim\kappa_3\llsim \kappa_2$.
		\item It holds $\|I_n f\|_{\alpha',\kappa'}\leq e^{\omega h_n}\|f\|_{\alpha',\kappa'}$ 
		for all $\alpha'\in (0,\alpha]$, $\kappa'\in\K$, $n\in\N$ and $f\in\C^{\alpha'}_{\kappa'}$.
		\item For every $\alpha'\in[0,\alpha)$, $\K\ni\kappa'\llsim\kappa$ and $n\in\N$, there exists $c\geq 0$ with 
		\[ \|I_n f-I_n g\|_{\alpha',\kappa'}\leq c\|f-g\|_{\alpha',\kappa'}
		\quad\mbox{for all } f,g\in\C^{\alpha'}_{\kappa'}. \]
	\end{enumerate}
\end{assumption}

In order to approximate the operators $(I_n)_{n\in\N}$ on $\C^\alpha_\kappa$,
we follow the infinite-dimensional generalization of neural networks over 
weighted spaces in~\cite{cuchiero26}. For every $\alpha'\in [0,\alpha)$ and
$\K\ni\kappa'\llsim\kappa$, we consider the normed vector space 
$\C^{\alpha,\alpha'}_{\kappa,\kappa'}:=(\C^\alpha_\kappa,\|\cdot\|_{\alpha',\kappa'})$
and define the weight function 
\[ \psi_{\alpha',\kappa',p}\colon\C^{\alpha,\alpha'}_{\kappa,\kappa'}\to (0,\infty),\;
f\mapsto (1+\|f\|_{\alpha,\kappa})^p. \]
By Theorem~\ref{thm:hol_cpt}, the set
$K_R:=\psi_{\alpha',\kappa',p}^{-1}((0,R])\subset\C^{\alpha,\alpha'}_{\kappa,\kappa'}$ is compact
and thus $\psi_{\alpha',\kappa',p}$ is an admissible weight in the sense 
of~\cite[Definition~2.1]{cuchiero26}.\ For a normed space $Y$,
we denote by $\B_p(\C^{\alpha,\alpha'}_{\kappa,\kappa'};Y)$ the completion 
of the space of all uniformly bounded continuous operators 
$T\colon\C^{\alpha,\alpha'}_{\kappa,\kappa'}\to Y$ w.r.t. 
\[ \|T\|_{\B_p(\C^{\alpha,\alpha'}_{\kappa,\kappa'};Y)} 
:=\sup_{f\in\C^\alpha_\kappa}\frac{\|Tf\|_Y}{(1+\|f\|_{\alpha,\kappa})^p}. \]
Here, an operator $T\colon\C^{\alpha,\alpha'}_{\kappa,\kappa'}\to Y$ is called uniformly
bounded if $\sup\{\|Tf\|_Y\colon f\in\C^{\alpha,\alpha'}_{\kappa,\kappa'}\}<\infty$.\ 
In the case of $Y:=\R$, it follows from~\cite[Lemma~2.7]{cuchiero26} that 
$T\in\B_p(\C^{\alpha,\alpha'}_{\kappa,\kappa'};\R)$ if and only if 
$T\vert_{K_R}$ is continuous for all $R>0$ and 
$\lim_{R\to\infty}\sup_{f\in\C^\alpha_\kappa\setminus K_R}\frac{|Tf|}{\psi_{\alpha',\kappa',p}(f)}=0$.

Now, we introduce Chernoff-neural operators mapping the input $f\in\C^\alpha_\kappa$
via some mappings $h\in\H$ to the real hidden layer space, on which a non-linear 
activation function $\rho\in\C(\R)$ is applied, and then return the output via 
some linear readouts $\L\subset\C^\alpha_\kappa$ into $\C^\alpha_\kappa$, 
see Figure~\ref{fig:cn}.

\begin{definition}
	For a set $\H$ consisting of mappings $h\colon\C_{\kappa_0}\to\R$, $\rho\in\C(\R)$
	and a subset $\L\subset\C^\alpha_\kappa$, a \emph{Chernoff-neural operator} is a mapping 
	of the form
	\[ \Phi\colon\C_{\kappa_0}\to\C^\alpha_\kappa,\; f \mapsto\sum_{m=1}^M y_m\rho (h_m(f)),\]
	where $M\in\N$ is the number of neurons, $h_1,\ldots,h_M\in\H$ are hidden layer maps
	and $y_1,\ldots,y_M\in\L$ are linear readouts. We collect all operators of this form
	in the set $\mathcal{CN}^{\H,\rho,\L}_{\C_{\kappa_0},\C^\alpha_\kappa}$.
\end{definition}

\begin{figure}[ht]
	\centering
	\begin{tikzpicture}[
		inputnode/.style={circle, draw=green!60, fill=green!5, very thick, minimum size=4mm},
		hiddennode/.style={circle, draw=blue!60, fill=blue!5, very thick, minimum size=4mm},
		outputnode/.style={circle, draw=red!60, fill=red!5, very thick, minimum size=4mm},
		node distance=7mm,
		]
		\node[inputnode] (x1) {};
		\node[hiddennode] (y2) [right = 2cm of x1] {};
		\node[hiddennode] (y1) [above of = y2] {};
		\node[hiddennode] (y3) [below of = y2] {};
		\node[outputnode] (o1) [right = 2cm of y2] {};
		
		\draw[shorten >=0.1cm,shorten <=0.1cm, ->] (x1.east) -- (y1.west);	
		\draw[shorten >=0.1cm,shorten <=0.1cm, ->] (x1.east) -- (y2.west);
		\draw[shorten >=0.1cm,shorten <=0.1cm, ->] (x1.east) -- (y3.west);
		\draw[shorten >=0.1cm,shorten <=0.1cm, ->] (y1.east) -- (o1.west);
		\draw[shorten >=0.1cm,shorten <=0.1cm, ->] (y2.east) -- (o1.west);
		\draw[shorten >=0.1cm,shorten <=0.1cm, ->] (y3.east) -- (o1.west);
		\draw[shorten >=0.2cm,shorten <=0.2cm, ->] (y3) to[out=-120,in=-60,loop] ();
		
		\draw[] (-0.1,0.9) node[anchor=center, align=center] {\footnotesize Input Layer};
		\draw[] (0,0.6) node[anchor=center, align=center] {\footnotesize $\C_{\kappa_0}$};
		\draw[] (2.45,1.5) node[anchor=center, align=center] {\footnotesize Hidden Layer};
		\draw[] (2.45,1.2) node[anchor=center, align=center] {\footnotesize $\R$};
		\draw[] (4.85,0.9) node[anchor=center, align=center] {\footnotesize Output Layer};
		\draw[] (4.85,0.6) node[anchor=center, align=center] {\footnotesize $\C^\alpha_\kappa$};
		\draw[] (-0.85,0) node[anchor=center, align=center] {\footnotesize $f \in \C_{\kappa_0}$};
		\draw[] (6.0,0) node[anchor=center, align=center] {\footnotesize $\Phi(f) \in \C^\alpha_\kappa$};
		\draw[] (1.25,-0.7) node[anchor=center, align=center] {\footnotesize $\mathcal{H}$};
		\draw[] (3.6,-0.7) node[anchor=center, align=center] {\footnotesize $\mathcal{L}$};
		\draw[] (2.43,-1.2) node[anchor=center, align=center] {\footnotesize $\rho$};
	\end{tikzpicture}
	\caption{A Chernoff-neural operator $\Phi\colon\C_{\kappa_0}\to\C^\alpha_\kappa$
		with additive family $\H$, activation function $\rho\in\C(\R)$, linear readout 
		$\L\subset\C^\alpha_\kappa$ and $M=3$ neurons.}
	\label{fig:cn}
\end{figure}

\begin{assumption} \label{ass:cn_operator}
	Suppose that the following conditions are satisfied:
	\begin{enumerate}
		\item The set $\H$ is a vector space, contains all constant mappings and is point 
		separating on~$\C_{\kappa_0}$, i.e., for every $f,g\in\C_{\kappa_0}$ 
		with $f\neq g$ there exists $h\in\H$ with $h(f)\neq h(g)$.\ Moreover, for every 
		$0\leq\alpha'<\alpha''\leq\alpha$, $\kappa'\llsim\kappa''\in\K$ and $h\in\H$, the mapping
		$h\colon\C^{\alpha'',\alpha'}_{\kappa'',\kappa'} \rightarrow \R$ is continuous and satisfies
		\[ c_{\alpha'',\kappa'',h}:=\sup_{f\in\C^{\alpha''}_{\kappa''}}
		\frac{1+|h(f)|}{1+\|f\|_{\alpha'',\kappa''}}<\infty. \]
		\item The function $\rho\in\C(\R)$ is non-polynomial with 
		$\lim_{|x|\to\infty}\frac{|\rho(x)|}{(1+|x|)^p}=0$. 
		\item The set $\L\subset\C^\alpha_\kappa$ is a dense linear subspace of $\c^{\alpha'}_{\kappa'}$
		for all $0\leq\alpha'<\alpha$ and $\K\ni\kappa'\llsim\kappa$.
	\end{enumerate}
\end{assumption}

\begin{example}
	As an example for Assumption~\ref{ass:cn_operator}(i), we consider the set
	\[ \H:=\left\{\C_{\kappa_0}\to\R,\; f\mapsto\int_{\Rd} f(x)\,\nu(\d x)+b\colon 
	\nu\in\M_{\kappa_0}, b\in\R\right\}. \]
	Since $\H$ contains the Dirac measures $\{\delta_x\colon x\in\Rd\}$, it is point separating.\
	Moreover, for every $h\in\H$ of the form $h(f):=\int_{\Rd} f(x)\,\nu(\d x)+b$, $\alpha''\in [0,\alpha]$
	and $\kappa''\in\K$, it holds 
	\[ \sup_{f\in\C^{\alpha''}_{\kappa''}}\frac{1+|h(f)|}{1+\|f\|_{\alpha'',\kappa''}}
	\leq\sup_{f\in\C^{\alpha''}_{\kappa''}}\frac{1+\|f\|_{\kappa_0}c_\nu+|b|}{1+\|f\|_{\alpha'',\kappa''}} 
	\leq\sup_{f\in\C^{\alpha''}_{\kappa''}}\frac{1+c_{\alpha'',\kappa''}\|f\|_{\alpha'',\kappa''}c_\nu+|b|}
	{1+\|f\|_{\alpha'',\kappa''}}<\infty, \]
	where $c_\nu:=\int_{\Rd}\frac{1}{\kappa_0(x)}|\nu|(\d x)$ and $c_{\alpha'',\kappa''}$ is 
	the operator norm of the embedding $\C^{\alpha''}_{\kappa''} \hookrightarrow \C_{\kappa_0}$.
	
	We further observe that, for a bounded and non-polynomial $\tilde{\rho}\in\C(\R)$ and 
	a fixed probability measure $\tilde{\nu}\in\M_{\kappa_0}$ having full support, the subset
	\[ \tilde{\H}:=\left\{\C_{\kappa_0}\to\R,\; 
	f\mapsto\int_{\Rd}f(x)\varphi(x)\,\tilde{\nu}(\d x)+b\colon
	\varphi\in\cN^{\tilde{\rho}}_{\Rd,\R}\right\} \]
	also satisfies Assumption~\ref{ass:cn_operator}(i).\ Note that $\tilde{\H}$ can be
	implemented on a computer by approximating the integral with samples from $\tilde{\nu}$.
	Here, the space $\cN^{\tilde{\rho}}_{\Rd,\R}$ consists of all neural networks
	\[ \varphi\colon\Rd\to\R,\; x \mapsto\sum_{k=1}^K y_k\tilde{\rho}\left(a_k^\top x+b_k\right), \]
	where $K\in\N$ is the number of neurons, $a_1,\ldots,a_K\in\Rd$ are weights, 
	$b_1,\ldots,b_K\in\R$ are biases, $y_1,\ldots,y_K\in\R$ are linear readouts and 
	$\tilde{\rho}$ is a (bounded) activation function.\ Indeed, for every $f,g\in\C_{\kappa_0}$
	with $f \neq g$, we define $u:=f-g \neq 0$, $c:=\int_{\Rd}|u|\,\d\tilde{\nu}>0$ and
	$\d\mu:=\frac{|u|}{c}\d\tilde{\nu}$. Since $\cN^{\tilde{\rho}}_{\Rd,\R}$ is
	dense in $L^1(\mu)$ by~\cite[Proposition~1]{leshno93}, there is $\varphi\in\cN^{\tilde{\rho}}_{\Rd,\R}$
	with $\|\varphi-\sgn(u(\cdot))\|_{L^1(\mu)}<1$. Hence,
	\[ \left|\frac{1}{c}\int_{\Rd}u(x)\varphi(x)\,\tilde{\nu}(\d x)-1\right|
	=\left|\int_{\Rd}\sgn(u(x))(\varphi(x)-\sgn(u(x)))\,\mu(\d x)\right| 
	\leq\|\varphi-\sgn(u(\cdot))\|_{L^1(\mu)}<1 \] 
	and therefore $\int_{\Rd}f(x)\varphi(x)\,\tilde{\nu}(\d x)\neq\int_{\Rd}g(x)\varphi(x)\,\tilde{\nu}(\d x)$.
	
	As an example for Assumption~\ref{ass:cn_operator}(iii), we can choose 
	$\L:=\cN^{\tilde{\rho}}_{\Rd,\R}\subset\C^\alpha_\kappa$. For instance, if $\alpha=1$, 
	$\kappa(x)=(1+|x|^q)^{-1}$ with $q>0$ and $\tilde{\rho}\in\C^1(\R)$ is non-polynomial 
	with $\lim_{|x|\to\infty}\frac{|\tilde{\rho}(x)|}{(1+|x|)^q} = 0$, the set 
	$\cN^{\tilde{\rho}}_{\Rd,\R}$ is dense in $\C^1_\kappa$ by~\cite[Theorem~2.7]{neufeld26} 
	and therefore also dense in $\c^{\alpha'}_{\kappa'}$ for all $0\leq\alpha'<1$ 
	and $\K\ni\kappa'\llsim\kappa$, see Theorem~\ref{thm:hol_dense}.
\end{example}

In order to obtain the following universal approximation result, we adapt 
the arguments from~\cite[Theorem~4.13]{cuchiero26} to establish the universality
of Chernoff-neural operators.

\begin{theorem} \label{thm:uat_chernoff}
	Suppose that the Assumptions~\ref{ass:cn_uat} and~\ref{ass:cn_operator} are satisfied.\ 
	Then, for every $k,n\in\N$, $0\leq\alpha_l'<\alpha_l\leq\alpha$
	and $\kappa_l'\llsim\kappa_l\in\K$ for $l=1,\ldots,k$ and $\epsilon>0$, there exists 
	$\Phi_n\in\mathcal{CN}^{\H,\rho,\L}_{\C_{\kappa_0},\C^\alpha_\kappa}$ with
	\[ \max_{l=1,\ldots,k}\sup_{f\in\C^{\alpha_l}_{\kappa_l}}
	\frac{\|I_n f-\Phi_n f\|_{\alpha_l',\kappa_l'}}{(1+\|f\|_{\alpha_l,\kappa_l})^p}<\epsilon.\]
\end{theorem}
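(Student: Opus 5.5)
Fix $n\in\N$. The plan is to reduce the statement to a weighted Stone--Weierstrass/universal approximation argument in the spirit of \cite[Theorem~4.13]{cuchiero26}, applied on a single weighted space. Then I pass from scalar-valued approximation to $\c^{\alpha'}_{\kappa'}$-valued approximation by finite-rank truncation, using the dense readout space $\L$.

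\textbf{Reduction to one target pair.} For every $l$ pick, via Assumption~\ref{ass:cn_uat}(i), intermediate weights $\kappa_l'\llsim\tilde\kappa_l\llsim\kappa_l$ in $\K$ and exponents $\alpha_l'<\tilde\alpha_l<\alpha_l$. Since the error is a finite maximum over $l$, I will build a single $\Phi_n$ by working with a common sum. It suffices to approximate $I_n$ simultaneously in each weighted space $\B_p(\C^{\alpha_l,\alpha_l'}_{\kappa_l,\kappa_l'};\C^{\alpha_l'}_{\kappa_l'})$.

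First I check that $I_n$ belongs to this space, i.e.\ that it lies in the closure of uniformly bounded continuous operators. By Assumption~\ref{ass:cn_uat}(ii) we have $\|I_nf\|_{\alpha_l,\kappa_l}\le e^{\omega h_n}\|f\|_{\alpha_l,\kappa_l}$. By (iii), $I_n$ is Lipschitz in $\|\cdot\|_{\alpha_l',\kappa_l'}$, hence continuous on each compact $K_R$. By Theorem~\ref{thm:hol_cpt}, $I_n$ maps $K_R$ into a set with compact closure in $\c^{\alpha_l'}_{\kappa_l'}$. The growth $\|I_nf\|_{\alpha_l',\kappa_l'}\lesssim 1+\|f\|_{\alpha_l,\kappa_l}=o((1+\|f\|)^p)$ holds since $p>1$. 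The vector-valued analogue of \cite[Lemma~2.7]{cuchiero26} then gives membership, via cutoff $I_n\cdot\chi(\psi/R)$.

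\textbf{Finite-rank step.} On the compact set $\overline{I_n(K_R)}\subset\c^{\alpha_l'}_{\kappa_l'}$, I use density of $\L$ from Assumption~\ref{ass:cn_operator}(iii) to find finitely many $y_1,\dots,y_J\in\L$ and continuous coefficient functionals $a_j$ such that $\sup_{f\in K_R}\|I_nf-\sum_j a_j(f)y_j\|_{\alpha_l',\kappa_l'}<\epsilon$. The functionals come from a partition of unity subordinate to a finite $\epsilon$-net, with net points replaced by nearby elements of $\L$. Because the same finite set of $y_j$ must work for all $l$, I take the union of the nets for $l=1,\dots,k$, working in the strongest norm involved. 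Each $a_j$ is a bounded continuous scalar functional on $K_R$. Outside $K_R$, the weighted tail $\sup_{f\notin K_R}\|I_nf\|/\psi(f)\to0$ lets me extend by a cutoff, at the cost of $\epsilon$.

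\textbf{Scalar universality.} Each $a_j$ now belongs to $\B_p(\C^{\alpha_l,\alpha_l'}_{\kappa_l,\kappa_l'};\R)$ for all $l$. I apply the weighted universal approximation theorem for one-hidden-layer networks $\sum_m w_m\rho(h_m(f))$ with $h_m\in\H$ from \cite[Theorem~4.13]{cuchiero26}. Its hypotheses are supplied by Assumption~\ref{ass:cn_operator}(i),(ii): $\H$ is a point-separating vector space containing constants, each $h$ is continuous on the compacts $K_R$ and has linear growth $c_{\alpha,\kappa,h}$, and $\rho$ is non-polynomial with growth $o((1+|x|)^p)$. This yields networks approximating each $a_j$ in the weighted sup norm. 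Finally, $\Phi_n:=\sum_j y_j\sum_m w_{jm}\rho(h_{jm}(\cdot))$ is a Chernoff-neural operator, since scalars $w_{jm}y_j$ lie in $\L$. The total error is controlled by $\sum_j\|y_j\|_{\alpha_l',\kappa_l'}\cdot$ the scalar errors plus $\epsilon$.

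\textbf{Main obstacle.} The main difficulty is the simultaneity in $l$ within the scalar step. The weight $\psi$ differs for each $l$, and one network must approximate $a_j$ in all $k$ weighted norms at once. I would handle this by noting that the Stone--Weierstrass density argument of \cite{cuchiero26} works on the common set $\C^{\max}$ with weight $\max_l\psi_l$ (or via a sum of weights). Growth of $h$ in each norm is controlled by the constants $c_{\alpha_l,\kappa_l,h}$, and continuity on each $K_R^{(l)}$ holds by Assumption~\ref{ass:cn_operator}(i) applied with $\alpha''=\alpha_l$ and $\kappa''=\kappa_l$. If needed, I first approximate on the union of compact sublevel sets and control the tails separately, using the $o(\cdot)^p$ growth of $\rho$.
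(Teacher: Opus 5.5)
Your single-pair argument ($k=1$) is essentially sound, and it differs from the paper mainly in where $\rho$ enters. You reduce to finitely many scalar coefficient functionals and approximate those directly by $\rho$-networks via \cite{cuchiero26}. The paper instead works with the bounded algebra $\A$ generated by $\cos(h(\cdot)),\sin(h(\cdot))$, proves density of the submodule $\W$ (Theorem~\ref{thm:stone_wstrass_vector}), and only then passes to $\rho$ through the global estimate $\sup_t|\cos t-\varphi_1(t)|/(1+|t|)^p<\epsilon$ combined with the constants $c_{\alpha_l,\kappa_l,h}$. A minor point: your cutoff $I_n\cdot\chi(\psi/R)$ is not continuous on $\C^{\alpha_l,\alpha_l'}_{\kappa_l,\kappa_l'}$, since $f\mapsto\|f\|_{\alpha_l,\kappa_l}$ is only lower semicontinuous for $\|\cdot\|_{\alpha_l',\kappa_l'}$; truncate the output instead, as the paper does with $T^{(R)}$. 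The genuine gap is the simultaneity in $l$. This is exactly what goes beyond \cite{cuchiero26}, and it is what Theorem~\ref{thm:approx_chernoff} needs for the chain $(\alpha_{l-1},\kappa_{l-1})\to(\alpha_l,\kappa_l)$. You flag it as the main obstacle, but neither of your two steps resolves it.

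\emph{Scalar step.} Approximating in the weight $\max_l\psi_l$ or $\sum_l\psi_l$ on a common set only yields $|a_j(f)-\phi(f)|\le\epsilon\max_l\psi_l(f)$. That is weaker than the required $|a_j(f)-\phi(f)|\le\epsilon\psi_l(f)$ for each $l$, and a common (intersection) space misses inputs lying only in the larger spaces $\C^{\alpha_l}_{\kappa_l}$. The relevant object is $\min_l\psi_l$ on $\bigcup_l\C^{\alpha_l}_{\kappa_l}$, whose sublevel sets $\bigcup_lK^{(l)}_R$ are compact in the weakest of the norms $\|\cdot\|_{\alpha_l',\kappa_l'}$; this is the set $K_2$ in Theorem~\ref{thm:stone_wstrass_R}. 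Your fallback of approximating by networks on that union and then controlling tails by the growth of $\rho$ is circular. The network, and hence its growth constants, depend on $R$. The paper controls tails by using bounded approximants from $\A$, clipping with $g$, and a polynomial approximation on the bounded range of $a$.

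\emph{Finite-rank step.} There is no single net ``in the strongest norm''. For $f$ in a larger input space, $I_n(K^{(l)}_R)$ is merely bounded in $\C^{\alpha_l}_{\kappa_l}$ and need not be precompact in the strongest output norm. Conversely, a net in a weak norm gives no control in the stronger norms required for $f$ in the smaller input spaces. What is needed is a pointwise choice, for each $u$ in the compact union, of the strongest output norm among the indices $l$ with $u\in\C^{\alpha_l}_{\kappa_l}$. This must be combined with a partition of unity built in the weakest topology whose supports enforce closeness in every relevant norm at once. That is exactly the role of the sets $L_u$, $l_u$, $F_{u,l}$ and $U_u$ in the proof of Theorem~\ref{thm:stone_wstrass_vector}.
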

\begin{proof}
	First, we show that $I_n\in\B_p(\C^{\alpha_l,\alpha_l'}_{\kappa_l,\kappa_l'};\c^{\alpha_l'}_{\kappa_l'})$ 
	for all $n\in\N$ and $l = 1,\ldots,k$.\ To do so, for every $R\in\N$, we define 
	$I_n^{(R)}:=T^{(R)}\circ I_n\colon\C^{\alpha_l,\alpha_l'}_{\kappa_l,\kappa_l'}\to\c^{\alpha_l'}_{\kappa_l'}$, 
	where 
	\[ T^{(R)}\colon\c^{\alpha_l'}_{\kappa_l'}\to\c^{\alpha_l'}_{\kappa_l'},\; g\mapsto
	\begin{cases}
		g & \text{if } \|g\|_{\alpha_l',\kappa_l'}\leq R, \\
		\frac{R}{\|g\|_{\alpha_l',\kappa_l'}}\, g & \text{if } \|g\|_{\alpha_l',\kappa_l'}>R.
	\end{cases} \]
	Assumption~\ref{ass:cn_uat}(iii) and Theorem~\ref{thm:hol_cont} imply that 
	$I_n^{(R)}:=T^{(R)}\circ I_n\colon\C^{\alpha_l,\alpha_l'}_{\kappa_l,\kappa_l'}\to\c^{\alpha_l'}_{\kappa_l'}$ 
	is well-defined and continuous, while the definition of $T^{(R)}$ ensures that $I_n^{(R)}$ 
	is uniformly bounded on $\C^{\alpha_l,\alpha_l'}_{\kappa_l,\kappa_l'}$ which implies that 
	$I_n^{(R)}\in\Cb(\C^{\alpha_l,\alpha_l'}_{\kappa_l,\kappa_l'};\c^{\alpha_l'}_{\kappa_l'})$.\ 
	Moreover, we use Assumption~\ref{ass:cn_uat}(ii), Theorem~\ref{thm:hol_cont} with $c\geq 0$ 
	denoting the operator norm of the embedding $\C^{\alpha_l}_{\kappa_l} \hookrightarrow \C^{\alpha_l'}_{\kappa_l'}$
	and $p>1$ to obtain
	\begin{align*}
		\lim_{R\to\infty}\|I_n-I_n^{(R)}\|_{\B_p(\C^{\alpha_l,\alpha_l'}_{\kappa_l,\kappa_l'};\c^{\alpha_l'}_{\kappa_l'})} 
		&=\lim_{R\to\infty}\sup_{f\in\C^{\alpha_l}_{\kappa_l}}
		\frac{\|I_nf-I_n^{(R)}f\|_{\alpha_l',\kappa_l'}}{(1+\|f\|_{\alpha_l,\kappa_l})^p} \\
		&\leq\lim_{R\to\infty}\sup_{\|I_nf\|_{\alpha_l',\kappa_l'}\geq R} 
		\frac{\|I_n f\|_{\alpha_l',\kappa_l'}+\|I_n^{(R)}f\|_{\alpha_l',\kappa_l'}}{(1+\|f\|_{\alpha_l,\kappa_l})^p} \\
		&\leq\lim_{R\to\infty}\sup_{e^{\omega h_n}\|f\|_{\alpha_l',\kappa_l'}\geq R} 
		\frac{e^{\omega h_n}\|f\|_{\alpha_l',\kappa_l'}+R}{(1+\|f\|_{\alpha_l,\kappa_l})^p} \\
		&\leq 2 c e^{\omega h_n} \lim_{R\to\infty}\sup_{ce^{\omega h_n}\|f\|_{\alpha_l,\kappa_l}\geq R} \frac{\|f\|_{\alpha_l,\kappa_l}}{(1+\|f\|_{\alpha_l,\kappa_l})^p}=0.
	\end{align*}
	This shows that $I_n\in\B_p(\C^{\alpha_l,\alpha_l'}_{\kappa_l,\kappa_l'};\c^{\alpha_l'}_{\kappa_l'})$.\
	Following the arguments in the proof of~\cite[Theorem~4.13]{cuchiero26}, one can further show that 
	$y\rho(h(\cdot))\in\B_p(\C^{\alpha_l,\alpha_l'}_{\kappa_l,\kappa_l'};\c^{\alpha_l'}_{\kappa_l'})$ to obtain 
	$\mathcal{CN}^{\H,\rho,\L}_{\C_{\kappa_0},\C^\alpha_\kappa}\subset 
	\B_p(\C^{\alpha_l,\alpha_l'}_{\kappa_l,\kappa_l'};\c^{\alpha_l'}_{\kappa_l'})$.
	
	Second, we define $\A:=\linspan(\{\cos(h(\cdot))\colon h\in\H\}\cup\{\sin(h(\cdot))\colon h\in\H\})$
	and show that the vector space $\W:=\linspan(\{y a(\cdot)\colon a\in\A, y\in\L\})$ is contained 
	in the closure of $\mathcal{CN}^{\H,\rho,\L}_{\C_{\kappa_0},\C^\alpha_\kappa}$ w.r.t.
	\begin{equation} \label{eq:thm:uat_chernoff:proof1}
		\max_{l=1,\ldots,k}\|T\|_{\B_p(\C^{\alpha_l,\alpha_l'}_{\kappa_l,\kappa_l'};\c^{\alpha_l'}_{\kappa_l'})} 
		:=\max_{l=1,\ldots,k}\sup_{f\in\C^{\alpha_l}_{\kappa_l}}
		\frac{\|Tf\|_{\alpha_l',\kappa_l'}}{(1+\|f\|_{\alpha_l,\kappa_l})^p}.
	\end{equation}
	For $y\in\L$, $h\in\H$, and $\epsilon>0$, we define $c_h:=\max_{l=1,\ldots,k}c_{\alpha_l,\kappa_l,h}$ 
	and $c_y:=\max_{l=1,\ldots,k}\|y\|_{\alpha_l',\kappa_l'}$. Then, by~\cite[Proposition~4.4(A3)]{cuchiero26}
	or~\cite[Theorem~2.7]{neufeld26}, there exists $\varphi_1\in\mathcal{NN}^\rho_{\R,\R}$ with
	\[ \sup_{t\in\R}\frac{|\cos(t)-\varphi_1(t)|}{(1+|t|)^p}<\frac{\epsilon}{\max(1,c_h^p c_y)}.\]
	Hence, the Chernoff-neural 
	operator $\varphi:=y\varphi_1(h(\cdot))\in\mathcal{CN}^{\H,\rho,\L}_{\C_{\kappa_0},\C^\alpha_\kappa}$
	satisfies
	\begin{align*}
		& \max_{l=1,\ldots,k}\|y\cos(h(\cdot))-\varphi(\cdot)\|_
		{\B_p(\C^{\alpha_l,\alpha_l'}_{\kappa_l,\kappa_l'};\c^{\alpha_l'}_{\kappa_l'})} 
		=\max_{l=1,\ldots,k}\sup_{f\in\C^{\alpha_l}_{\kappa_l}} 
		\frac{\|y\cos(h(f))-y\varphi_1(h(f))\|_{\alpha_l',\kappa_l'}}{(1+\|f\|_{\alpha_l,\kappa_l})^p} \\
		&\leq\Big(\max_{l=1,\ldots,k}\|y\|_{\alpha_l',\kappa_l'}\Big) 
		\bigg(\max_{l=1,\ldots,k}\sup_{f\in\C^{\alpha_l}_{\kappa_l}} \
		\frac{(1+|h(f)|)^p}{(1+\|f\|_{\alpha_l,\kappa_l})^p}\bigg) 
		\sup_{t\in\R}\frac{|\cos(t)-\varphi_1(t)|}{(1+|t|)^p}
		<\frac{c_y c_h^p \epsilon}{\max(1,c_h^p c_y)}\leq\epsilon.
	\end{align*}
	Since the same estimate holds true for $y\sin(h(\cdot))$, the set~$\W$ is contained 
	in the closure of $\mathcal{CN}^{\H,\rho,\L}_{\C_{\kappa_0},\C^\alpha_\kappa}$ 
	w.r.t. the norm defined in equation~\eqref{eq:thm:uat_chernoff:proof1}.
	
	Third, due to some fundamental trigonometric identities, see~\cite[Equation~4.3]{cuchiero26},
	the set~$\A$ is an algebra.\ By definition, the set~$\W$ is an $\A$-submodule.\
	Since~$\H$ is point separating and contains all constant mappings, the set~$\A$ is 
	point separating and nowhere vanishing.\ Finally, by Assumption~\ref{ass:cn_operator}(iii),
	the set $\W(f):=\{w(f)\colon w\in\W\}=\L$ is dense in $\c^{\alpha'}_{\kappa'}$ for all 
	$0\leq\alpha'<\alpha''\leq\alpha$, $\kappa'\llsim\kappa''\in\K$ and $f\in\C^{\alpha''}_{\kappa''}$.
	Hence, the claim follows from Theorem~\ref{thm:stone_wstrass_vector}.
\end{proof}

\subsection{Qualitative approximation of the semigroup}

We now present the main result of this article which shows that iterations of Chernoff-neural 
operators~$\Phi_n\in\mathcal{CN}^{\H,\rho,\L}_{\C_{\kappa_0},\C^\alpha_\kappa}$ 
are able to approximate the strongly continuous convex monotone semigroup $(S(t))_{t\geq 0}$.

\begin{theorem} \label{thm:approx_chernoff}
	Suppose that the Assumptions~\ref{ass:cn_uat} and~\ref{ass:cn_operator} are satisfied.\ 
	Then, for every $\alpha'\in [0,\alpha)$, $\K\ni\kappa'\llsim\kappa$, $T\geq 0$, $n\in\N$ 
	and $\epsilon>0$, there exists $\Phi_n\in\mathcal{CN}^{\H,\rho,\L}_{\C_{\kappa_0},\C^\alpha_\kappa}$ with
	\[ \|I(\pi_n^t)f-\Phi(\pi^t_n)f\|_{\alpha',\kappa'}\leq (1+\|f\|_{\alpha,\kappa})^{p^{k_n^t}}\epsilon
	\quad\text{for all } t\in [0,T] \text{ and } f\in\C^\alpha_\kappa. \]
	Moreover, let Assumption~\ref{ass:chernoff} be valid and let
	$(S(t))_{t \geq 0}$ be the semigroup from Theorem~\ref{thm:chernoff}.\
	Then, for every $r,T\geq 0$, $\alpha'\in [0,\alpha)$, $\K\ni\kappa'\llsim\kappa$ and $\epsilon>0$, 
	there exist $n\in\N$ and $\Phi_n\in\mathcal{CN}^{\H,\rho,\L}_{\C_{\kappa_0},\C^\alpha_\kappa}$ with
	\[ \sup_{f\in B_{\C^\alpha_\kappa}(r)}\sup_{t\in [0,T]}\|S(t)f-\Phi(\pi^t_n)f\|_{\alpha',\kappa'}<\epsilon. \]
\end{theorem}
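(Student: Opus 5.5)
The plan is to prove the first statement by induction over the number of iterations $k=k_n^t\in\{0,\ldots,K\}$, where $K:=k_n^T$. Since $I(\pi_n^t)$ and $\Phi(\pi_n^t)$ depend on $t$ only through $k_n^t$, only finitely many iteration counts occur on $[0,T]$. The one-step error will be controlled in a chain of intermediate spaces, and Theorem~\ref{thm:uat_chernoff} will be applied once, with $k:=K$ weight/H\"older pairs simultaneously.

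\textbf{Choice of the chain.} Using Assumption~\ref{ass:cn_uat}(i), I choose weights $\kappa'=:\kappa_K\llsim\kappa_{K-1}\llsim\cdots\llsim\kappa_0^\ast:=\kappa$ in $\K$ and exponents $\alpha'=:\alpha_K<\cdots<\alpha_0^\ast:=\alpha$; if $\alpha'<\alpha$ is fixed, interpolating the exponents is trivial. Theorem~\ref{thm:uat_chernoff} is then applied to the pairs $(\alpha_{j},\kappa_{j})\to(\alpha_{j+1},\kappa_{j+1})$ with a small $\delta>0$. This gives $\Phi_n$ with
\[ \|I_n g-\Phi_n g\|_{\alpha_{j+1},\kappa_{j+1}}\leq\delta(1+\|g\|_{\alpha_j,\kappa_j})^p \]
for all $g\in\C^{\alpha_j}_{\kappa_j}$ and all $j$.

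\textbf{Inductive step.} Write $e_k:=I_n^kf-\Phi_n^kf$ and split
\[ e_{k+1}=\big(I_n(I_n^kf)-I_n(\Phi_n^kf)\big)+\big(I_n(\Phi_n^kf)-\Phi_n(\Phi_n^kf)\big). \]
The first term is bounded by $c\|e_k\|$ using the Lipschitz property of Assumption~\ref{ass:cn_uat}(iii) in a lower-regularity norm. The second term is bounded by $\delta(1+\|\Phi_n^kf\|_{\alpha_k,\kappa_k})^p$. Moreover,
\[ \|\Phi_n^kf\|_{\alpha_k,\kappa_k}\leq\|I_n^kf\|_{\alpha_k,\kappa_k}+\|e_k\|_{\alpha_k,\kappa_k}\leq e^{\omega kh_n}C\|f\|_{\alpha,\kappa}+\|e_k\|_{\alpha_k,\kappa_k}, \]
using Assumption~\ref{ass:cn_uat}(ii) and the embeddings from Theorem~\ref{thm:hol_cont}.

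\textbf{The main obstacle.} The indexing must be consistent: the error $e_k$ has to be measured in the stronger norm $\|\cdot\|_{\alpha_k,\kappa_k}$ so that it can feed the next one-step estimate, yet it is produced by the previous one-step estimate only in the weaker norm $\|\cdot\|_{\alpha_{k},\kappa_{k}}$ of that step. To make this work, I would index the chain so that step $k+1$ uses the pair $(\alpha_k,\kappa_k)\to(\alpha_{k+1},\kappa_{k+1})$ and the error hypothesis reads $\|e_k\|_{\alpha_k,\kappa_k}\leq(1+\|f\|_{\alpha,\kappa})^{p^k}\epsilon_k$. Lipschitz continuity in the $(\alpha_{k+1},\kappa_{k+1})$-norm then follows from Assumption~\ref{ass:cn_uat}(iii) together with the embedding $\C^{\alpha_k}_{\kappa_k}\hookrightarrow\C^{\alpha_{k+1}}_{\kappa_{k+1}}$. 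Since $(1+a+b)^p\lesssim(1+a)^p(1+b)^p$, the exponent grows from $p^k$ to $p^{k+1}$, and constants depending on $n,K$ are absorbed by choosing $\delta$ small and recursively choosing $\epsilon_k$ with $\epsilon_K=\epsilon$. Finally, the embedding $\|\cdot\|_{\alpha',\kappa'}\lesssim\|\cdot\|_{\alpha_k,\kappa_k}$ transfers every $e_k$ to the target norm.

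\textbf{Second statement.} Fix $r,T,\epsilon$. By Corollary~\ref{cor:chernoff}, whose hypothesis holds by Assumption~\ref{ass:cn_uat}(ii) with $\kappa\in\K$, choose $n$ with
\[ \sup_{f\in B_{\C^\alpha_\kappa}(r)}\sup_{t\in[0,T]}\|S(t)f-I(\pi_n^t)f\|_{\alpha',\kappa'}<\epsilon/2. \]
Then apply the first part with $\epsilon/\big(2(1+r)^{p^{k_n^T}}\big)$. The triangle inequality gives the claim, since $p^{k_n^t}\leq p^{k_n^T}$ for $p>1$.
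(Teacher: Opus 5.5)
Your proposal is correct and uses the same ingredients as the paper's proof:
\begin{itemize}
\item the chain $\alpha'=\alpha_K<\cdots<\alpha$, $\kappa'=\kappa_K\llsim\cdots\llsim\kappa$ in $\K$ from Assumption~\ref{ass:cn_uat}(i);
\item a single application of Theorem~\ref{thm:uat_chernoff} to the $K$ consecutive pairs;
\item error propagation via the Lipschitz bound of Assumption~\ref{ass:cn_uat}(iii) combined with the embeddings;
\item Corollary~\ref{cor:chernoff} with the choice $\epsilon/(2(1+r)^{p^{k_n^T}})$ for the second part, which the paper only asserts.
\end{itemize}
The bookkeeping differs. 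The paper first proves an a priori growth bound
\[ 1+\|\Phi_n^l f\|_{\alpha_l,\kappa_l}\leq (2c)^{\frac{p^l-1}{p-1}}(1+\|f\|_{\alpha,\kappa})^{p^l}. \]
This uses only that the one-step error is below $1$ and that $\|I_n g\|_{\alpha_l,\kappa_l}\leq c\|g\|_{\alpha_{l-1},\kappa_{l-1}}$. It then telescopes $I_n^kf-\Phi_n^kf=\sum_l(I_n^{k-l}I_n\Phi_n^{l-1}f-I_n^{k-l}\Phi_n\Phi_n^{l-1}f)$, so the error is only ever measured in the target norm. Your one-step recursion instead controls $\Phi_n^kf$ through $I_n^kf+e_k$. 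This forces you to carry $e_k$ in the stronger intermediate norms $\|\cdot\|_{\alpha_k,\kappa_k}$. In exchange, the growth of $\Phi_n^kf$ comes directly from that of $I_n^kf$, using Assumption~\ref{ass:cn_uat}(ii) only at level $(\alpha,\kappa)$.

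One step, as written, does not give the claimed exponent. You bound $(1+a+b)^p\leq(1+a)^p(1+b)^p$ with $a\lesssim\|f\|_{\alpha,\kappa}$ and $b\leq\epsilon_k(1+\|f\|_{\alpha,\kappa})^{p^k}$. That product yields $(1+\|f\|_{\alpha,\kappa})^{p+p^{k+1}}$, and iterating gives the exponent $p+p^2+\cdots+p^k$ rather than $p^k$.

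The fix is to use a sum instead of a product. Take $C'\geq 1$ with $1+\|I_n^kf\|_{\alpha_k,\kappa_k}\leq C'(1+\|f\|_{\alpha,\kappa})$ for $k\leq K$. Since $1+\|f\|_{\alpha,\kappa}\leq(1+\|f\|_{\alpha,\kappa})^{p^k}$, you get $1+\|\Phi_n^kf\|_{\alpha_k,\kappa_k}\leq(C'+\epsilon_k)(1+\|f\|_{\alpha,\kappa})^{p^k}$. The local term is therefore at most $\delta(C'+\epsilon_k)^p(1+\|f\|_{\alpha,\kappa})^{p^{k+1}}$. The propagated term is at most $c_1\epsilon_k(1+\|f\|_{\alpha,\kappa})^{p^{k+1}}$, where $c_1$ is the Lipschitz constant from Assumption~\ref{ass:cn_uat}(iii) times the embedding constant.

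This gives the forward recursion $\epsilon_0=0$, $\epsilon_{k+1}=c_1\epsilon_k+\delta(C'+\epsilon_k)^p$. Since $\epsilon_1,\ldots,\epsilon_K\to 0$ as $\delta\to 0$, choose $\delta$ so that $\max_{k\leq K}\epsilon_k$, times the embedding constant into $\|\cdot\|_{\alpha',\kappa'}$, is at most $\epsilon$. You need this for every $k\leq K$, not only $k=K$, because the claim is uniform in $t\in[0,T]$.
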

\begin{proof}
	Let $\alpha'\in [0,\alpha)$, $\kappa'\llsim\kappa\in\K$, $p\in (1,\infty)$, $T\geq 0$, 
	$n\in\N$ and $\epsilon>0$.\ By Assumption~\ref{ass:cn_uat}(i), there exist 
	$\alpha'=\alpha_{k_n^T}<\ldots<\alpha_0=\alpha$ and $\kappa'=\kappa_{k_n^T}\llsim\ldots\llsim\kappa_0=\kappa$.\
	Since Assumption~\ref{ass:cn_uat}(ii) guarantees that $I_n^k f\in\C^\alpha_\kappa$ 
	for all $k\in\N$ and $f\in\C^\alpha_\kappa$, we can further apply 
	Assumption~\ref{ass:cn_uat}(ii)--(iii) and Theorem~\ref{thm:hol_cont} to obtain a 
	constant $c\geq 1$ with 
	\begin{align} 
		\|I_nf\|_{\alpha_l,\kappa_l} &\leq c\|f\|_{\alpha_{l-1},\kappa_{l-1}}, \label{eq:cor:uat_proof2a} \\
		\|I_n^k f-I_n^k g\|_{\alpha_l,\kappa_l}
		&\leq c\|f-g\|_{\alpha_l,\kappa_l} \label{eq:cor:uat_proof2b}
	\end{align}
	for all $f,g\in\C^\alpha_\kappa$ and $k,l\in\{1,\ldots,k_n^T\}$.\ By Theorem~\ref{thm:uat_chernoff},
	there exists $\Phi_n\in\mathcal{CN}^{\H,\rho,\L}_{\C_{\kappa_0},\C^\alpha_\kappa}$ with
	\begin{equation} \label{eq:cor:uat_proof3}
		\max_{1\leq l\leq k^T_n}\sup_{f\in\C^{\alpha_{l-1}}_{\kappa_{l-1}}} 
		\frac{\|I_nf-\Phi_nf\|_{\alpha_l,\kappa_l}}{(1+\|f\|_{\alpha_{l-1},\kappa_{l-1}})^p}
		<\min\left\{1,\frac{\epsilon}{C}\right\},
	\end{equation}
	where $C:=c\sum_{l=1}^{k_n^T}(2c)^\frac{p(p^l-1)}{p-1}$. We use $\Phi_n^{l-1}f\in\C^\alpha_\kappa$,
	inequality~\eqref{eq:cor:uat_proof2a} and inequality~\eqref{eq:cor:uat_proof3} to obtain
	\begin{align*}
		1+\|\Phi_n^l f\|_{\alpha_l,\kappa_l}
		&=1+\|\Phi_n\Phi_n^{l-1}f\|_{\alpha_l,\kappa_l} \\
		&\leq1+\|I_n\Phi_n^{l-1}f\|_{\alpha_l,\kappa_l}
		+\|I_n\Phi_n^{l-1}f-\Phi_n\Phi_n^{l-1}f\|_{\alpha_l,\kappa_l}\\
		&\leq1+c\|\Phi_n^{l-1}f\|_{\alpha_{l-1},\kappa_{l-1}}+(1+\|\Phi_n^{l-1}f\|_{\alpha_{l-1},\kappa_{l-1}})^p \\
		&\leq 2c(1+\|\Phi_n^{l-1}f\|_{\alpha_{l-1},\kappa_{l-1}})^p
	\end{align*}
	for all $l\in\{1,\ldots,k_n^T\}$ and $f\in\C^\alpha_\kappa$. Hence, by induction on 
	$l\in\{1,\ldots,k_n^T\}$, it follows that
	\begin{equation} \label{eq:cor:uat_proof4}
		1+\|\Phi_n^l f\|_{\alpha_l,\kappa_l}\leq(2c)^\frac{p^l-1}{p-1}
		(1+\|f\|_{\alpha,\kappa})^{p^l} \quad\text{for all } f\in\C^\alpha_\kappa.
	\end{equation}
	For every $t\in [0,T]$ and $f\in\C^\alpha_\kappa$, we use $\Phi_n^{l-1}f\in\C^\alpha_\kappa$ 
	and the inequalities~\eqref{eq:cor:uat_proof2b}--\eqref{eq:cor:uat_proof4} to obtain
	\begin{align*}
		\|I(\pi^t_n)f-\Phi(\pi^t_n)f\|_{\alpha',\kappa'}
		&=\big\|I_n^{k^t_n}f-\Phi_n^{k^t_n}f\big\|_{\alpha',\kappa'} 
		\leq\sum_{l=1}^{k^t_n}\big\|I_n^{k^t_n-l}I_n\Phi_n^{l-1}f
		-I_n^{k^t_n-l}\Phi_n\Phi_n^{l-1}f\big\|_{\alpha',\kappa'} \\
		&\leq c\sum_{l=1}^{k^t_n}\|I_n\Phi_n^{l-1}f-\Phi_n\Phi_n^{l-1}f\|_{\alpha_l,\kappa_l} 
		\leq \frac{c\epsilon}{C}\sum_{l=1}^{k^t_n}(1+\|\Phi_n^{l-1}f\|_{\alpha_{l-1},\kappa_{l-1}})^p \\
		&\leq \frac{c\epsilon}{C}\sum_{l=1}^{k_n^t}(2c)^\frac{p(p^l-1)}{p-1}(1+\|f\|_{\alpha,\kappa})^{p^l}
		\leq (1+\|f\|_{\alpha,\kappa})^{p^{k_n^t}}\epsilon.
	\end{align*}
	The second part follows from the first part and Corollary~\ref{cor:chernoff}.
\end{proof}

\section{Envelope-neural operators}
\label{sec:nisio}

We now introduce a more specific neural operator architecture for the case that 
the one-step operators are given as the supremum over a family of linear operators.\ 
This covers, for instance, the important class of Nisio semigroups which have 
originally been introduced in~\cite{Nisio76} and adapted to transition semigroups 
of stochastic processes under model uncertainty in~\cite{DKN20,nendel21}.\ In the
sequel, we fix $\alpha\in (0,1]$, a bounded continuous function $\kappa\colon\Rd\to (0,\infty)$ 
satisfying inequality~\eqref{eq:weight_cond}, an index set~$\Lambda$, a family 
$\{I_{n,\lambda}\colon n\in\N,\,\lambda\in\Lambda\}$ of operators $I_{n,\lambda}\colon\Ck\to\Ck$, 
penalization functions $\eta,\eta_n\colon\Lambda\to[0,\infty]$ and a sequence 
$(h_n)_{n\in\N}\subset (0,\infty)$  with $h_n\to 0$. We define
\begin{equation} \label{eq:nisio}
	(I_nf)(x):=\sup_{\lambda\in\Lambda}\big((I_{n,\lambda}f)(x)-\eta_n(\lambda)h_n\big) 
\end{equation}
for all $n\in\N$, $f\in\Ck$ and $x\in\R^d$. Moreover, for every $\lambda\in\Lambda$, 
we define
\[ A_\lambda\colon D(A_\lambda)\to\Ck,\; f\mapsto\lim_{n\rightarrow\infty}\frac{I_{n,\lambda}f-f}{h_n}, \] 
where the domain $D(A_\lambda)$ consists of all $f\in\Ck$ such that the previous limit exists.\
To guarantee that $I_n\colon\Ck\to\Ck$ and that Assumption~\ref{ass:chernoff} is satisfied, 
we impose the following conditions.

\begin{assumption} \label{ass:nisio}
	Suppose that the conditions~(i)--(vii) or that the conditions~(i)--(vi) and~(vii') 
	of the following list are satisfied:
	\begin{enumerate}
		\item For every $n\in\N$, there exists $\lambda_{0,n}\in\Lambda$ with $\eta_n(\lambda_{0,n})=0$.
		\item The operators $I_{n,\lambda}$ are linear and monotone
		for all $n\in\N$ and $\lambda\in\Lambda$.
		\item There exists $\omega\geq 0$ with $\|I_{n,\lambda}f\|_\kappa\leq e^{\omega h_n}\|f\|_\kappa$
		for all $n\in\N$, $\lambda\in\Lambda$ and $f\in\Ck$.
		\item For every $\epsilon>0$, there exists $\delta>0$ and $n_0\in\N$ with
		\[ I_{n,\lambda}(\tau_x f)\leq\tau_x (I_{n,\lambda}f)+\frac{r\epsilon h_n}{\kappa} \]
		for all $n\geq n_0$, $r\geq 0$, $f\in\Lipb(r)$, $x\in B_{\Rd}(\delta)$ and $\lambda\in\Lambda$.
		\item For every $\epsilon>0$, $r,T\geq 0$ and $K\Subset\Rd$, there exist
		$c\geq 0$ and $K'\Subset\Rd$ with 
		\[ \|I(\pi^t_n)f-I(\pi^t_n)g\|_{\infty,K}\leq c\|f-g\|_{\infty,K'}+\epsilon \]
		for all $t\in [0,T]$, $n\in\N$ and $f,g\in B_{\Ck}(r)$.
		\item It holds $I_{n,\lambda}\colon\Lipb(r)\to\Lipb(e^{\omega h_n}r)$ 
		for all $n \in \N$, $\lambda\in\Lambda$ and $r\geq 0$.
		\item The limit $I'(0)f\in\Ck$ exists for all $f\in\Cbi$.
		\item[(vii')] It holds $\Cbi\subset D(A_\lambda)$ for all $\lambda\in\Lambda$.\ In addition, 
		for every $f\in\Cbi$, there exist $c\geq 0$, $n_0\in\N$ and $\Lambda_0\subset\Lambda$ 
		such that the following statements are true:
		\begin{enumerate}[label=(\alph*)]
			\item $\sup_{\lambda\in\Lambda}(I_{n,\lambda}f-\eta_n(\lambda)h_n)
			=\sup_{\lambda\in\Lambda_0}(I_{n,\lambda}f-\eta_n(\lambda)h_n)$ for all $n\geq n_0$,
			\item $\sup_{\lambda\in\Lambda}(A_\lambda f-\eta(\lambda))
			=\sup_{\lambda\in\Lambda_0}(A_\lambda f-\eta(\lambda))$,
			\item $\sup_{n\in\N}\sup_{\lambda\in\Lambda_0}\eta_n(\lambda)<\infty$,
			$\sup_{\lambda\in\Lambda_0}\|I_{n,\lambda}f-f\|_\kappa\leq ch_n$ for all $n\geq n_0$ and 
			\[ \lim_{n\rightarrow\infty}\sup_{\lambda\in\Lambda_0}
			\left\|\frac{I_{n,\lambda}f-f}{h_n}-A_\lambda f\right\|_{\infty,K}=0 
			\quad\text{for all } K\Subset\Rd. \]
			Furthermore, it holds $\lim_{n\to\infty}\sup_{\lambda\in\Lambda_0}|\eta_n(\lambda)-\eta(\lambda)|=0$.
		\end{enumerate}
	\end{enumerate}
\end{assumption}

Note that condition~(vii') immediately implies condition~(vii).

\begin{theorem} \label{thm:nisio}
	Suppose that the Assumptions~\ref{ass:nisio}(i)--(vii) are valid.\ Then, there 
	exists a strongly continuous convex monotone semigroup $(S(t))_{t \geq 0}$ on~$\Ck$ 
	given by
	\[ S(t)f:=\lim_{n\to\infty}I(\pi_n^t)f \quad\text{for all } t\geq 0 \text{ and } f\in\Ck \]
	whose generator satisfies $Af=I'(0)f$ for all $f\in\Cbi$.\ If the 
	Assumption~\ref{ass:nisio}(i)--(vi) and~(vii') are valid, we further obtain
	\[ Af=\sup_{\lambda\in\Lambda}\big(A_\lambda f-\eta(\lambda)\big) \quad\text{for all } f\in\Cbi. \]
\end{theorem}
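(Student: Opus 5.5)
The plan is to verify Assumption~\ref{ass:chernoff} for the operators $I_n$ defined in \eqref{eq:nisio} and then invoke Theorem~\ref{thm:chernoff}. First I check that $I_n\colon\Ck\to\Ck$ is well defined. By (i), $I_nf\geq I_{n,\lambda_{0,n}}f$, so the supremum is not $-\infty$. By (iii) and linearity, $I_{n,\lambda}f\leq e^{\omega h_n}\|f\|_\kappa/\kappa$, which bounds it from above, and together these give $\|I_nf\|_\kappa\leq e^{\omega h_n}\|f\|_\kappa$. Continuity of $I_nf$ is the delicate point. For $f\in\Lipb$, (vi) makes each $x\mapsto(I_{n,\lambda}f)(x)-\eta_n(\lambda)h_n$ Lipschitz with a constant independent of $\lambda$, so the supremum is Lipschitz. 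For general $f\in\Ck$, I approximate $f$ in the mixed topology by Lipschitz functions, truncated using $1/\kappa$, and pass to the limit using (v), which gives local uniform control of $I_n$ on compacts. I expect this approximation step to be the main technical obstacle. If it does not go through, I would fall back on the equivalence of condition (v) with continuity from above (Lemma~C.2 of \cite{denk25}).

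Next I verify the conditions one by one. For (i): each map $f\mapsto I_{n,\lambda}f-\eta_n(\lambda)h_n$ is affine and monotone, so the supremum is convex and monotone. Moreover $I_n0=\sup_\lambda(-\eta_n(\lambda)h_n)=0$ by (i) and $\eta_n\geq0$. For (ii): $|I_nf-I_ng|\leq\sup_\lambda|I_{n,\lambda}(f-g)|$, and since $I_{n,\lambda}$ is linear and monotone, $|I_{n,\lambda}u|\leq I_{n,\lambda}|u|\leq e^{\omega h_n}\|u\|_\kappa/\kappa$. Condition (iii) is exactly Assumption~\ref{ass:nisio}(v). For (iv): $I_{n,\lambda}(\tau_xf)-\eta_n(\lambda)h_n\leq\tau_x(I_{n,\lambda}f)-\eta_n(\lambda)h_n+r\epsilon h_n/\kappa$, with $\delta$ uniform in $\lambda$. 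Taking the supremum over $\lambda$ gives the claim, once we note $\tau_x\sup=\sup\tau_x$. For (v): a supremum of $e^{\omega h_n}r$-Lipschitz functions is $e^{\omega h_n}r$-Lipschitz. For the sup-norm bound, $I_n0=0$ together with the Lipschitz estimate from (ii) in the case $\kappa\equiv$ const suffices. More directly, I would use $|I_{n,\lambda}f|\leq e^{\omega h_n}r$, which follows from (vi) applied with $\|f\|_\infty\leq r$, together with $-e^{\omega h_n}r\leq I_{n,\lambda_{0,n}}f\leq I_nf$. Condition (vi) is Assumption~\ref{ass:nisio}(vii). Theorem~\ref{thm:chernoff} then yields the semigroup with $Af=I'(0)f$ on $\Cbi$.

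For the second claim, fix $f\in\Cbi$ and take $c,n_0,\Lambda_0$ from (vii'). Since (vii') implies (vii), it suffices to show that $(I_nf-f)/h_n$ converges in the mixed topology to $g:=\sup_{\lambda\in\Lambda_0}(A_\lambda f-\eta(\lambda))$, which by (b) equals $\sup_{\lambda\in\Lambda}(A_\lambda f-\eta(\lambda))$. By (a), for $n\geq n_0$,
\[ \frac{I_nf-f}{h_n}=\sup_{\lambda\in\Lambda_0}\Big(\frac{I_{n,\lambda}f-f}{h_n}-\eta_n(\lambda)\Big). \]
Boundedness in $\|\cdot\|_\kappa$ follows from (c): $\|I_{n,\lambda}f-f\|_\kappa\leq ch_n$ and $\sup_{n}\sup_{\Lambda_0}\eta_n<\infty$. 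Local uniform convergence follows from the elementary inequality $|\sup_\lambda a_\lambda-\sup_\lambda b_\lambda|\leq\sup_\lambda|a_\lambda-b_\lambda|$ on each compact $K$. The uniform convergence of the difference quotients on $K$ and of $\eta_n\to\eta$ on $\Lambda_0$ (both from (c)) make the right-hand side tend to $0$. Hence $I'(0)f=g$, and the first part gives $Af=g$.
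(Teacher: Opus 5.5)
Your proposal is correct and follows the same route as the paper, which simply states that the result is a particular case of Theorem~\ref{thm:chernoff}. You check Assumption~\ref{ass:chernoff}(i)--(vi) for the envelope operators and, under (vii'), identify $I'(0)f=\sup_{\lambda\in\Lambda}(A_\lambda f-\eta(\lambda))$ via $|\sup_\lambda a_\lambda-\sup_\lambda b_\lambda|\leq\sup_\lambda|a_\lambda-b_\lambda|$, which fills in exactly what the paper leaves implicit. The continuity step you flag does go through as you describe: Assumption~\ref{ass:nisio}(v) at $t=h_n$ (so $k_n^t=1$) gives $I_ng_k\to I_nf$ uniformly on compacts for $\|\cdot\|_\kappa$-bounded bounded Lipschitz approximants $g_k\to f$, so $I_nf$ is continuous.
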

\begin{proof}
	This is a particular case of Theorem~\ref{thm:chernoff}.
\end{proof}

\begin{corollary} \label{cor:nisio}
	Let Assumption~\ref{ass:nisio} be satisfied and denote by $(S(t))_{t\geq 0}$
	the strongly continuous convex monotone semigroup from Theorem~\ref{thm:nisio}.
	Assume that there exists $\tilde{\omega}\geq 0$ with
	\[ \|I_{n,\lambda}f\|_{\alpha,\kappa}\leq e^{\tilde{\omega}h_n}\|f\|_{\alpha,\kappa} 
	\quad\text{for all } n\in\N, \lambda\in\Lambda \text{ and } f\in \C^\alpha_\kappa. \]
	Then, it holds $\|S(t)f\|_{\alpha,\kappa}\leq e^{\tilde{\omega}t}\|f\|_{\alpha,\kappa}$
	for all $t\geq 0$ and $f\in\C^\alpha_\kappa$. Furthermore,  
	\[ \lim_{n\to\infty}\sup_{f\in B_{\C^\alpha_\kappa}(r)}\sup_{t\in [0,T]}
	\|S(t)f-I(\pi_n^t)f\|_{\alpha',\kappa'}=0 \]
	for all $r,T\geq 0$, $\alpha'\in [0,\alpha)$ and $\kappa'\llsim\kappa$.
\end{corollary}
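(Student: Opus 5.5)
The plan is to reduce Corollary~\ref{cor:nisio} to Corollary~\ref{cor:chernoff}. By Theorem~\ref{thm:nisio}, Assumption~\ref{ass:nisio} implies that $I_n$ from \eqref{eq:nisio} satisfies Assumption~\ref{ass:chernoff}, and the semigroup is the one from Theorem~\ref{thm:chernoff}. So the only thing to check is the Hölder stability hypothesis of Corollary~\ref{cor:chernoff}:
\[ \|I_n f\|_{\alpha,\kappa}\leq e^{\tilde{\omega}h_n}\|f\|_{\alpha,\kappa} \quad\text{for all } n\in\N \text{ and } f\in\C^\alpha_\kappa. \]
Once this holds, both conclusions follow verbatim from Corollary~\ref{cor:chernoff}.

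\textbf{Weighted sup part.} Fix $n$, $f\in\C^\alpha_\kappa$ and $x\in\Rd$. Since $\eta_n\geq 0$, we have $(I_nf)(x)\leq\sup_{\lambda}(I_{n,\lambda}f)(x)$. Assumption~\ref{ass:nisio}(i) gives $\lambda_{0,n}$ with $\eta_n(\lambda_{0,n})=0$, so $(I_nf)(x)\geq (I_{n,\lambda_{0,n}}f)(x)$. Together these yield
\[ |(I_nf)(x)|\leq\sup_{\lambda\in\Lambda}|(I_{n,\lambda}f)(x)|. \]
Hence $|(I_nf)(x)|\kappa(x)\leq\sup_\lambda\|I_{n,\lambda}f\|_{\alpha,\kappa}\leq e^{\tilde\omega h_n}\|f\|_{\alpha,\kappa}$. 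The lower bound also shows that the supremum is finite.

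\textbf{Hölder part.} For $x\neq y$, the elementary inequality $|\sup_\lambda a_\lambda-\sup_\lambda b_\lambda|\leq\sup_\lambda|a_\lambda-b_\lambda|$ applies, because the penalty $\eta_n(\lambda)h_n$ is the same at $x$ and $y$ and cancels. (Indices with $\eta_n(\lambda)=\infty$ contribute $-\infty$ and can simply be dropped, since $\lambda_{0,n}$ remains.) This gives
\[ \frac{|(I_nf)(x)-(I_nf)(y)|}{|x-y|^\alpha}\bar\kappa(x,y)\leq\sup_{\lambda}\frac{|(I_{n,\lambda}f)(x)-(I_{n,\lambda}f)(y)|}{|x-y|^\alpha}\bar\kappa(x,y)\leq e^{\tilde\omega h_n}\|f\|_{\alpha,\kappa}. \]
Taking the maximum of the two parts yields the required bound.

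\textbf{Remaining point.} Corollary~\ref{cor:chernoff} is stated for operators on $\Ck$, and $\C^\alpha_\kappa\subset\Ck$, so nothing more is needed: $I_nf\in\Ck$ already by Theorem~\ref{thm:nisio}, and the Hölder bound applies to it directly.

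The main obstacle is therefore only bookkeeping, namely handling $\eta_n=\infty$ and the finiteness of the supremum. Both are covered by Assumption~\ref{ass:nisio}(i) and the uniform bound on $I_{n,\lambda}$.
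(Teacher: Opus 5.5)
Your proposal is correct and takes the same route as the paper, whose proof is just the one-line reduction to Corollary~\ref{cor:chernoff}. You also spell out the step the paper leaves implicit: the H\"older bound on each $I_{n,\lambda}$ passes to the supremum $I_n$, using $\eta_n\geq 0$, the element $\lambda_{0,n}$ and the cancellation of the penalty term in the H\"older difference. The fact that Assumption~\ref{ass:nisio} yields Assumption~\ref{ass:chernoff} for $I_n$ is likewise what the paper's proof of Theorem~\ref{thm:nisio} uses implicitly, rather than something that theorem states outright.
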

\begin{proof}
	This is a particular case of Corollary~\ref{cor:chernoff}.
\end{proof}

\subsection{Universal approximation}

In order to approximate the operators $(I_n)_{n\in\N}$ defined by 
equation~\eqref{eq:nisio}, we replace the supremum over $\lambda\in\Lambda$ 
by the maximum over finitely many trainable parameters 
$\lambda_0,\ldots,\lambda_M\in\Lambda$ which can be implemented using
a ReLU-neural network, see Figure~\ref{fig:nn}.

\begin{definition}
	For every $n\in\N$, an envelope-neural operator is a mapping of the form
	\[ \Phi_n\colon\Ck\to\Ck,\;
	f\mapsto\varphi\big((I_{n,\lambda_0} f)(\cdot),(I_{n,\lambda_1} f)(\cdot),\ldots,(I_{n,\lambda_M} f)(\cdot)\big), \]
	where $\lambda_0:=\lambda_{0,n}\in\Lambda$ is the fixed parameter from Assumption~\ref{ass:nisio}(i), 
	$\lambda_1,\ldots,\lambda_M\in\Lambda$ are the (hidden-layer) parameters and 
	$\varphi\in\mathcal{NN}^{\ReLU}_{\R^{M+1},\R}$ is a ReLU-neural network. 
	We collect all operators of this form in the set $\mathcal{EN}^{\ReLU}_{\Ck,\Ck}$.
\end{definition}

\begin{figure}[ht]
	\centering
	\begin{tikzpicture}[
		inputnode/.style={circle, draw=green!60, fill=green!5, very thick, minimum size=4mm},
		hiddennode/.style={circle, draw=blue!60, fill=blue!5, very thick, minimum size=4mm},
		outputnode/.style={circle, draw=red!60, fill=red!5, very thick, minimum size=4mm},
		node distance=7mm,
		]
		\node[inputnode] (x1) {};
		\node[hiddennode] (y2) [right = 2cm of x1] {};
		\node[hiddennode] (y1) [above of = y2] {};
		\node[hiddennode] (y3) [below of = y2] {};
		\node[outputnode] (o1) [right = 2cm of y2] {};
		
		\draw[shorten >=0.1cm,shorten <=0.1cm, ->] (x1.east) -- (y1.west);	
		\draw[shorten >=0.1cm,shorten <=0.1cm, ->] (x1.east) -- (y2.west);
		\draw[shorten >=0.1cm,shorten <=0.1cm, ->] (x1.east) -- (y3.west);
		\draw[shorten >=0.1cm,shorten <=0.1cm, ->] (y1.east) -- (o1.west);
		\draw[shorten >=0.1cm,shorten <=0.1cm, ->] (y2.east) -- (o1.west);
		\draw[shorten >=0.1cm,shorten <=0.1cm, ->] (y3.east) -- (o1.west);
		
		\draw[] (-0.1,0.9) node[anchor=center, align=center] {\footnotesize Input Layer};
		\draw[] (0,0.6) node[anchor=center, align=center] {\footnotesize $\Ck$};
		\draw[] (2.45,1.5) node[anchor=center, align=center] {\footnotesize Hidden Layer};
		\draw[] (2.45,1.2) node[anchor=center, align=center] {\footnotesize $\Ck$};
		\draw[] (4.85,0.9) node[anchor=center, align=center] {\footnotesize Output Layer};
		\draw[] (4.85,0.6) node[anchor=center, align=center] {\footnotesize $\Ck$};
		\draw[] (-0.85,0) node[anchor=center, align=center] {\footnotesize $f\in\C^\alpha_\kappa$};
		\draw[] (6.0,0) node[anchor=center, align=center] {\footnotesize $\Phi(f)\in\C^\alpha_\kappa$};
		\draw[] (1.1,-0.7) node[anchor=center, align=center] {\footnotesize $I_{n,\lambda}$};
		\draw[] (3.6,-0.7) node[anchor=center, align=center] {\footnotesize $\varphi$};
	\end{tikzpicture}
	\caption{An envelope-neural operator $\Phi_n\colon\Ck\to\Ck$ 
		with hidden-layer maps from the family $(I_{n,\lambda})_{\lambda\in\Lambda}$, 
		ReLU-neural network $\varphi\in\mathcal{NN}^{\ReLU}_{\R^3,\R}$ as linear readout
		and $M+1=3$ neurons.}
	\label{fig:nn}
\end{figure}

\begin{assumption} \label{ass:en}
	Suppose that the following conditions are satisfied:
	\begin{enumerate}
		\item It holds $\|I_{n,\lambda}f\|_{\alpha,\kappa}\leq e^{\omega h_n}\|f\|_{\alpha,\kappa}$
		for all $n\in\N$, $\lambda\in\Lambda$ and $f\in\C^\alpha_\kappa$.
		\item For every $r\geq 0$, there exists a totally bounded metric space $(\Lambda_r,d_r)$
		with $\Lambda_r\subset\Lambda$ such that the set $\Lambda_{r,n}:=\Lambda_r\cap\{\eta_n<\infty\}$ 
		satisfies
		\[ \sup_{\lambda\in\Lambda}\,(I_{n,\lambda}f-\eta_n(\lambda)h_n) 
		=\sup_{\lambda\in\Lambda_{r,n}}(I_{n,\lambda}f-\eta_n(\lambda)h_n)
		\quad \text{for all } n\in\N \text{ and } f\in B_{\C^\alpha_\kappa}(r). \]
		In addition, for every $r\geq 0$ and $\epsilon>0$, there exists $\delta>0$ such that
		\[ \|(I_{n,\lambda_1}f-\eta_n(\lambda_1)h_n)-(I_{n,\lambda_2}f-\eta_n(\lambda_2)h_n)\|_\kappa<\epsilon \]
		for all $n\in\N$, $f\in B_{\C^\alpha_\kappa}(r)$ and $\lambda_1,\lambda_2\in\Lambda_{r,n}$
		with $d_r(\lambda_1,\lambda_2)<\delta$.\ Finally, the element~$\lambda_{0,n}$
		from Assumption~\ref{ass:nisio}(i) satisfies $\lambda_{0,n}\in\Lambda_r$ for all $n\in\N$.
	\end{enumerate}
\end{assumption}

\begin{theorem} \label{thm:uat_nisio}
	Suppose that the Assumptions~\ref{ass:nisio} and \ref{ass:en} are satisfied.\ 
	Then, for every $n\in\N$, $r\geq 0$, $\alpha'\in [0,\alpha)$ and $\epsilon>0$,
	there exists $\Phi_n\in\mathcal{EN}^{\ReLU}_{\Ck,\Ck}$ with 
	\[ \sup_{f\in B_{\C^\alpha_\kappa}(r)} \|I_n f-\Phi_n f\|_{\alpha',\kappa}<\epsilon. \]
\end{theorem}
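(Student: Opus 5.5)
Fix $n$, $r$, $\alpha'$ and $\epsilon$. The plan has two stages. First we approximate $I_n$ on $B_{\C^\alpha_\kappa}(r)$ in the weaker norm $\|\cdot\|_\kappa$ by a finite maximum, which is exactly representable by a ReLU network. Then we upgrade the $\|\cdot\|_\kappa$-estimate to an $\|\cdot\|_{\alpha',\kappa}$-estimate by interpolating with a uniform $\alpha$-Hölder bound.

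\emph{Step 1 (finite maximum).} Given $\epsilon'>0$, choose $\delta>0$ according to Assumption~\ref{ass:en}(ii). Since $(\Lambda_r,d_r)$ is totally bounded, so is $\Lambda_{r,n}\subset\Lambda_r$. Hence there are $\lambda_1,\ldots,\lambda_M\in\Lambda_{r,n}$ such that every $\lambda\in\Lambda_{r,n}$ satisfies $d_r(\lambda,\lambda_j)<\delta$ for some $j$. Define
\[ \Phi_nf:=\max_{j=0,\ldots,M}\big(I_{n,\lambda_j}f-\eta_n(\lambda_j)h_n\big), \]
with $\lambda_0=\lambda_{0,n}$ and $\eta_n(\lambda_0)=0$. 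All $\eta_n(\lambda_j)$ are finite, so the map $(z_0,\ldots,z_M)\mapsto\max_j(z_j-\eta_n(\lambda_j)h_n)$ is an exact ReLU network: iterate $\max(a,b)=b+\ReLU(a-b)$ and absorb the constants as biases. Therefore $\Phi_n\in\mathcal{EN}^{\ReLU}_{\Ck,\Ck}$. For $f\in B_{\C^\alpha_\kappa}(r)$, Assumption~\ref{ass:en}(ii) together with $\lambda_0\in\Lambda_r$ gives
\[ \Phi_nf\le I_nf\le \Phi_nf+\epsilon'/\kappa \]
pointwise. The second inequality holds because for each $\lambda\in\Lambda_{r,n}$ there is a $\lambda_j$ within distance $\delta$, and the uniform continuity estimate then applies. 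Consequently $\|I_nf-\Phi_nf\|_\kappa\le\epsilon'$.

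\emph{Step 2 (Hölder bound).} We bound $\|g\|_{\alpha,\kappa}$ uniformly over $g=I_nf-\Phi_nf$. By Assumption~\ref{ass:en}(i), $\|I_{n,\lambda}f\|_{\alpha,\kappa}\le e^{\omega h_n}r$ for every $\lambda$. The Hölder seminorm part of a supremum (or maximum) of functions is at most the supremum of their seminorms, because for each pair $x,y$ we have $|\sup_\lambda a_\lambda(x)-\sup_\lambda a_\lambda(y)|\le\sup_\lambda|a_\lambda(x)-a_\lambda(y)|$, and the constants $\eta_n h_n$ cancel in differences. Thus both $I_nf$ and $\Phi_nf$ have weighted Hölder seminorm at most $e^{\omega h_n}r$. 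Their $\|\cdot\|_\kappa$-norms are bounded using Assumption~\ref{ass:nisio}(iii) and the sandwich $I_{n,\lambda_0}f\le\Phi_nf\le I_nf$. We then need $\|I_nf\|_\kappa\le e^{\omega h_n}r$. This follows from Assumption~\ref{ass:chernoff}(ii), which holds under Theorem~\ref{thm:nisio}, applied with $I_n0=0$. So $\|g\|_{\alpha,\kappa}\le C:=2e^{\omega h_n}r$.

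\emph{Step 3 (interpolation), the main point.} We need the inequality
\[ \|g\|_{\alpha',\kappa}\lesssim\|g\|_{\kappa}^{1-\alpha'/\alpha}\|g\|_{\alpha,\kappa}^{\alpha'/\alpha} \]
up to a constant depending on the weight condition~\eqref{eq:weight_cond}. To prove it, fix $x\ne y$. If $|x-y|\ge s$, estimate
\[ |g(x)-g(y)|\bar\kappa(x,y)\le 4\|g\|_\kappa. \]
This uses $|g(x)|\bar\kappa(x,y)\le 2|g(x)|\kappa(x)$ and the analogous bound for $y$, so the quotient is at most $4\|g\|_\kappa s^{-\alpha'}$. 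If $|x-y|<s$, the quotient is at most $\|g\|_{\alpha,\kappa}s^{\alpha-\alpha'}$. Choosing $s=(\|g\|_\kappa/\|g\|_{\alpha,\kappa})^{1/\alpha}$ gives the claimed bound. This weight bookkeeping is the only delicate point, and it works out since $\bar\kappa\le2\min(\kappa(x),\kappa(y))$ without needing~\eqref{eq:weight_cond}. Hence
\[ \sup_{f\in B_{\C^\alpha_\kappa}(r)}\|I_nf-\Phi_nf\|_{\alpha',\kappa}\lesssim \epsilon'^{\,1-\alpha'/\alpha}C^{\alpha'/\alpha}, \]
which is smaller than $\epsilon$ for $\epsilon'$ small enough, because $\alpha'<\alpha$.
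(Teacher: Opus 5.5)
Your proof is correct and follows the same route as the paper. You take a finite $\delta$-net of $\Lambda_{r,n}$ and realize it as an exact ReLU maximum, which gives the $\|\cdot\|_\kappa$-bound $\epsilon'$. You then interpolate against the uniform bound $2e^{\omega h_n}r$ on the $\alpha$-Hölder part.

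The only cosmetic difference is in the interpolation step. You split at a scale $s$, while the paper uses the pointwise factorization $\frac{|g(x)-g(y)|}{|x-y|^{\alpha'}}\bar\kappa(x,y)=\big(\frac{|g(x)-g(y)|}{|x-y|^{\alpha}}\bar\kappa(x,y)\big)^{\alpha'/\alpha}\big(|g(x)-g(y)|\bar\kappa(x,y)\big)^{1-\alpha'/\alpha}$. Both give the same estimate up to a harmless constant.
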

\begin{proof}
	Let $\epsilon>0$ and $\epsilon'\in (0,\epsilon)$ with
	$(2e^{\omega h_n}r)^{\alpha'/\alpha}(2\epsilon')^{1-\alpha'/\alpha}<\epsilon/2$.
	Moreover, let~$\Lambda_{r,n}$ and $\delta>0$ satisfy Assumption~\ref{ass:en}(ii) with $\epsilon'$.\
	Since $\Lambda_{r,n}$ is also totally bounded, there exist $\lambda_1,\ldots,\lambda_M\in\Lambda_{r,n}$ with 
	$\Lambda_{r,n}\subset\bigcup_{m=1}^M \mathring{B}_{d_r}(\lambda_m,\delta)$. In particular, 
	choosing $m(\lambda)\in\argmin_{m=1,\ldots,M}d_r(\lambda,\lambda_m)$ yields 
	\[ \|(I_{n,\lambda}f-\eta_n(\lambda)h_n)-(I_{n,\lambda_{m(\lambda)}}f-\eta_n(\lambda_{m(\lambda)})h_n)\|_\kappa
	<\epsilon'<\epsilon \quad\text{for all } f\in B_{\C^\alpha_\kappa}(r). \]
	We implement the function $\R^{M+1}\to\R,\; y \mapsto\max_{m=0,\ldots,M}(y_m-\eta_n(\lambda_m)h_n)$
	with $\lambda_0:=\lambda_{0,n}$ as ReLU-neural network $\varphi\in\mathcal{NN}^{\ReLU}_{\R^{M+1},\R}$ 
	and define $\Phi_n\in\mathcal{EN}^{\ReLU}_{\Ck,\Ck}$ by 
	$\Phi_n(f):=\varphi\circ I_{n,\lambda_{0:M}}f$, where 
	$I_{n,\lambda_{0:M}}f:=((I_{n,\lambda_0}f)(\cdot),\ldots,(I_{n,\lambda_M}f)(\cdot))$. Then,
	for every $f\in B_{\C^\alpha_\kappa}(r)$, it holds
	\begin{equation} \label{eq:thm:uat_nisio:proof1}
		\|I_n f-\Phi_n f\|_\kappa 
		=\Big\|\sup_{\lambda\in\Lambda_{r,n}}(I_{n,\lambda}f-\eta_n(\lambda)h_n)
		-\max_{m=0,\ldots,M}(I_{n,\lambda_m}f-\eta_n(\lambda_m)h_n)\Big\|_\kappa
		<\epsilon'<\epsilon.
	\end{equation}
	Let $f\in B_{\C^\alpha_\kappa}(r)$ and define $g_n:=I_n f-\varphi\circ I_{n,\lambda_{0:M}}f$.
	Assumption~\ref{ass:en}(i) implies
	\[ \sup_{x\neq y}\frac{|g_n(x)-g_n(y)|}{|x-y|^\alpha}\bar{\kappa}(x,y)
	\leq\sup_{\lambda\in\Lambda}\|I_{n,\lambda}f\|_{\alpha,\kappa} 
	+\max_{m=0,\ldots,M}\|I_{n,\lambda_m}f\|_{\alpha,\kappa} 
	\leq 2e^{\omega h_n}\|f\|_{\alpha,\kappa}\leq 2e^{\omega h_n}r. \]
	Hence, it follows from inequality~\eqref{eq:thm:uat_nisio:proof1} and $\bar{\kappa}(x,y)\leq 2\min(\kappa(x),\kappa(y))$ that
	\begin{align*}
		\sup_{x\neq y}\frac{|g_n(x)-g_n(y)|}{|x-y|^{\alpha'}}\bar{\kappa}(x,y) 
		&\leq\sup_{x\neq y}\left(\frac{|g_n(x)-g_n(y)|}{|x-y|^\alpha} \bar{\kappa}(x,y)\right)^\frac{\alpha'}{\alpha} 
		\big(|g_n(x)-g_n(y)|\bar{\kappa}(x,y)\big)^{1-\frac{\alpha'}{\alpha}} \\
		&\leq (2e^{\omega h_n}r)^\frac{\alpha'}{\alpha}(2\epsilon')^{1-\frac{\alpha'}{\alpha}}
		<\epsilon. \qedhere
	\end{align*}
\end{proof}

The following theorem states that iterations of an envelope-neural operator~$\Phi_n$
are able to approximate the strongly continuous convex monotone semigroup $(S(t))_{t\geq 0}$.

\begin{theorem} \label{thm:approx_nisio}
	Suppose that the Assumptions~\ref{ass:nisio} and~\ref{ass:en} are satisfied.\
	Then, for every $r,T\geq 0$, $n\in\N$, $\alpha' \in [0,\alpha)$ and $\epsilon>0$, 
	there exists $\Phi_n\in\mathcal{EN}^{\ReLU}_{\Ck,\Ck}$ with
	\[ \sup_{f\in B_{\C^\alpha_\kappa}(r)}\sup_{t\in [0,T]}\|I(\pi^t_n)f-\Phi(\pi^t_n)f\|_{\alpha',\kappa}<\epsilon. \]
	Denote by $(S(t))_{t\geq 0}$ the semigroup from Theorem~\ref{thm:nisio}.\ Then, 
	for every $r,T\geq 0$, $\alpha'\in [0,\alpha)$, $\kappa'\llsim\kappa$ and $\epsilon>0$, 
	there exist $n\in\N$ and $\Phi_n\in\mathcal{EN}^{\ReLU}_{\Ck,\Ck}$ with
	\[ \sup_{f \in B_{\C^\alpha_\kappa}(r)} \sup_{t \in [0,T]} \|S(t)f-\Phi(\pi^t_n)f\|_{\alpha',\kappa'}<\epsilon. \]
\end{theorem}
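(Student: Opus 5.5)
The plan is to propagate the one-step error of Theorem~\ref{thm:uat_nisio} through the iterations by a telescoping argument, as in the proof of Theorem~\ref{thm:approx_chernoff}. Here no weight hierarchy is needed, because the envelope-neural operator preserves Hölder bounds directly. First I record the structural facts. By Assumption~\ref{ass:en}(i), each $I_{n,\lambda}$ maps $B_{\C^\alpha_\kappa}(r)$ into $B_{\C^\alpha_\kappa}(e^{\omega h_n}r)$ in the seminorm part. A maximum of finitely many functions minus constants has Hölder seminorm at most the maximum of the seminorms, and the same holds for the supremum. For the sup-part, $|\max_m(a_m-c_m)|\le \max_m|a_m|$ when some $c_{m_0}=0$ and all $c_m\geq 0$; this uses $\lambda_0=\lambda_{0,n}$ with $\eta_n(\lambda_0)=0$ and $\eta_n\geq 0$, since then $\max_m(a_m-c_m)\geq a_0\geq -\max_m|a_m|$. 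Hence both $\|I_nf\|_{\alpha,\kappa}$ and $\|\Phi_nf\|_{\alpha,\kappa}$ are at most $e^{\omega h_n}\|f\|_{\alpha,\kappa}$, for every envelope-neural operator of the form built in Theorem~\ref{thm:uat_nisio}. Thus all iterates $I_n^jf$ and $\Phi_n^jf$ with $j\le k_n^T$ stay in $B_{\C^\alpha_\kappa}(R)$ with $R:=e^{\omega T}r$ (using $k_n^Th_n\le T$). In addition, $I_n$ is Lipschitz in $\|\cdot\|_\kappa$ with constant $e^{\omega h_n}$ by Assumption~\ref{ass:chernoff}(ii), or directly from Assumption~\ref{ass:nisio}(ii)--(iii).

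Next I telescope in the $\|\cdot\|_\kappa$ norm:
\[ \|I_n^kf-\Phi_n^kf\|_\kappa\le\sum_{l=1}^{k}\|I_n^{k-l}I_n\Phi_n^{l-1}f-I_n^{k-l}\Phi_n\Phi_n^{l-1}f\|_\kappa\le\sum_{l=1}^k e^{\omega(k-l)h_n}\|I_n g_l-\Phi_n g_l\|_\kappa, \]
where $g_l:=\Phi_n^{l-1}f\in B_{\C^\alpha_\kappa}(R)$. I apply Theorem~\ref{thm:uat_nisio} with radius $R$ and $\alpha'=0$ and accuracy $\epsilon'$. Its proof yields the sup-norm estimate~\eqref{eq:thm:uat_nisio:proof1}, and the operator it constructs is of the max-form above, so the bound of the previous paragraph applies to it. This gives $\|I_n^kf-\Phi_n^kf\|_\kappa\le k_n^Te^{\omega T}\epsilon'$ uniformly in $t\in[0,T]$ and $f\in B_{\C^\alpha_\kappa}(r)$.

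To upgrade to $\|\cdot\|_{\alpha',\kappa}$, I use the interpolation step at the end of the proof of Theorem~\ref{thm:uat_nisio}. The difference $I_n^kf-\Phi_n^kf$ has $\alpha$-seminorm at most $2R$ and $\kappa$-norm at most $\delta:=k_n^Te^{\omega T}\epsilon'$. Hence its $\alpha'$-seminorm is at most $(2R)^{\alpha'/\alpha}(2\delta)^{1-\alpha'/\alpha}$, and choosing $\epsilon'$ small gives the first claim. The second claim follows by fixing $n$ from Corollary~\ref{cor:nisio} so that $\sup_f\sup_t\|S(t)f-I(\pi_n^t)f\|_{\alpha',\kappa'}<\epsilon/2$, then applying the first part with $\epsilon/(2c)$. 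Here $c$ is the norm of the embedding $\C^{\alpha'}_\kappa\hookrightarrow\C^{\alpha'}_{\kappa'}$ (Proposition~\ref{thm:hol_cont}, as $\kappa'\lesssim\kappa$). Finally, the hypothesis of Corollary~\ref{cor:nisio} is Assumption~\ref{ass:en}(i) with $\tilde\omega=\omega$.

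The main obstacle is ensuring that the iterates of $\Phi_n$ remain in a fixed Hölder ball, so that the one-step approximation can be applied at each step. This is why I would insist on the explicit max-structure with $\lambda_0$ included, rather than an arbitrary ReLU network from Theorem~\ref{thm:uat_nisio}. The key check is the inequality $\|\max_m(I_{n,\lambda_m}f-\eta_n(\lambda_m)h_n)\|_{\alpha,\kappa}\le e^{\omega h_n}\|f\|_{\alpha,\kappa}$, which relies on $\eta_n(\lambda_0)=0$.
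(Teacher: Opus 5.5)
Your proposal is correct and follows the paper's proof essentially step by step. The steps coincide: a $\|\cdot\|_\kappa$ telescoping estimate using the $e^{\omega h_n}$-Lipschitz property of $I_n$, the one-step bound from Theorem~\ref{thm:uat_nisio} on the ball of radius $e^{\omega T}r$, the interpolation bound $(2e^{\omega T}r)^{\alpha'/\alpha}(2\delta)^{1-\alpha'/\alpha}$ for the H\"older seminorm, and Corollary~\ref{cor:nisio} for the second part. You also make explicit two steps the paper uses tacitly: that the max-form operator including $\lambda_{0,n}$ (with $\eta_n(\lambda_{0,n})=0$) satisfies $\|\Phi_n f\|_{\alpha,\kappa}\le e^{\omega h_n}\|f\|_{\alpha,\kappa}$, and the embedding constant needed to pass from $\kappa$ to $\kappa'$.
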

\begin{proof}
	Let $r,T\geq 0$, $n\in\N$, $\alpha'\in [0,\alpha)$, $\epsilon>0$ and $\epsilon'\in (0,\epsilon)$ 
	with $(2e^{\omega T}r)^{\alpha'/\alpha}(2\epsilon')^{1-\alpha'/\alpha}<\epsilon$.
	Assumption~\ref{ass:nisio}(iii) guarantees that
	\[ \|I_{n,\lambda}f\|_\kappa\leq e^{\omega h_n}\|f\|_\kappa \quad\text{and}\quad
	\|I_n^k f-I_n^k g\|_\kappa\leq e^{\omega kh_n}\|f-g\|_\kappa \]
	for all $f,g\in\C^\alpha_\kappa$, $k\in\{1,\ldots,k^T_n\}$ and $\lambda\in\Lambda$.
	By Theorem~\ref{thm:uat_nisio}, there exists $\Phi_n\in\mathcal{EN}^{\ReLU}_{\Ck,\Ck}$ with
	\[ \|I_n f-\Phi_n f\|_\kappa<\frac{\epsilon'}{e^{\omega T}k^T_n}
	\quad\text{for all } f\in B_{\C^\alpha_\kappa}(e^{\omega T} r), \]
	where $\Phi_n f:=\max_{m=0,\ldots,M}(I_{n,\lambda_m}f-\eta_n(\lambda_m)h_n)$
	with $\lambda_0 := \lambda_{0,n}$ and some $\lambda_1,\ldots,\lambda_M\in\Lambda$.\
	For every $t\in [0,T]$ and $f\in B_{\C^\alpha_\kappa}(r)$, we use 
	$\|\Phi_n^{l-1} f\|_\kappa\leq\|\Phi_n^{l-1} f\|_{\alpha,\kappa}\leq e^{\omega (l-1)h_n}r\leq e^{\omega T}r$ to obtain
	\begin{align} 
		\|I(\pi^t_n)f-\Phi(\pi^t_n)f\|_\kappa
		&=\big\|I_n^{k^t_n}f-\Phi_n^{k^t_n}f\big\|_\kappa
		\leq\sum_{l=1}^{k^t_n}\big\|I_n^{k^t_n-l}I_n\Phi_n^{l-1}f-I_n^{k^t_n-l}\Phi_n\Phi_n^{l-1}f\big\|_\kappa 
		\nonumber \\
		&\leq e^{\omega T}\sum_{l=1}^{k^t_n}\|I_n\Phi_n^{l-1}f-\Phi_n\Phi_n^{l-1}f\|_\kappa
		<\frac{e^{\omega T}k^t_n\epsilon'}{e^{\omega T} k^T_n}\leq\epsilon'.
		\label{eq:approx_nisio:uat_proof2}
	\end{align}
	Let $f\in B_{\C^\alpha_\kappa}(r)$ and define $g^t_n:=I(\pi^t_n)f-\Phi(\pi^t_n)f$.
	It holds 
	\[ \sup_{x\neq y}\frac{|g^t_n(x)-g^t_n(y)|}{|x-y|^\alpha}\bar{\kappa}(x,y)
	\leq\|I(\pi^t_n)f\|_{\alpha,\kappa}+\|\Phi(\pi^t_n)f\|_{\alpha,\kappa} 
	\leq 2e^{\omega k^t_n h_n}\|f\|_{\alpha,\kappa}\leq 2e^{\omega T}r. \] 
	Hence, it follows from inequality~\eqref{eq:approx_nisio:uat_proof2} and $\bar{\kappa}(x,y)\leq 2\min(\kappa(x),\kappa(y))$ that
	\begin{align*}
		\sup_{x\neq y}\frac{|g^t_n(x)-g^t_n(y)|}{|x-y|^{\alpha'}}\bar{\kappa}(x,y) 
		&\leq\sup_{x\neq y}\left(\frac{|g^t_n(x)-g^t_n(y)|}{|x-y|^\alpha}\bar{\kappa}(x,y)\right)^\frac{\alpha'}{\alpha}
		\big(|g^t_n(x)-g^t_n(y)|\bar{\kappa}(x,y)\big)^{1-\frac{\alpha'}{\alpha}} \\
		&\leq (2e^{\omega T}r)^\frac{\alpha'}{\alpha}(2\epsilon')^{1-\frac{\alpha'}{\alpha}}<\epsilon.
	\end{align*}
	This shows the first part of the claim and the second part follows from Corollary~\ref{cor:nisio}.
\end{proof}

It remains to quantify the approximation error in terms of the number of neurons $M\in\N$.

\begin{assumption} \label{ass:en_rate}
	For every $r\geq 0$, there exists a totally bounded metric space $(\Lambda_r,d_r)$ with 
	$\Lambda_r\subset\Lambda$ such that the set $\Lambda_{r,n}:=\Lambda_r\cap\{\eta_n<\infty\}$ 
	satisfies
	\[ \sup_{\lambda\in\Lambda}\,(I_{n,\lambda}f-\eta_n(\lambda)h_n) 
	=\sup_{\lambda\in\Lambda_{r,n}}(I_{n,\lambda}f-\eta_n(\lambda)h_n)
	\quad \text{for all } n\in\N \text{ and } f\in B_{\C^\alpha_\kappa}(r). \]
	In addition, for every $r\geq 0$, there exist $L_r,\beta_r\geq 0$ with
	\[ \|(I_{n,\lambda_1}f-\eta_n(\lambda_1)h_n)-(I_{n,\lambda_2}f-\eta_n(\lambda_2)h_n)\|_\kappa 
	\leq L_r d_r(\lambda_1,\lambda_2)^{\beta_r} \]
	for all $n\in\N$, $f\in B_{\C^\alpha_\kappa}(r)$ and $\lambda_1,\lambda_2\in\Lambda_{r,n}$.\
	Finally, the element~$\lambda_{0,n}$ from Assumption~\ref{ass:nisio}(i) satisfies 
	$\lambda_{0,n}\in\Lambda_r$ for all $n\in\N$.
\end{assumption}

For every $M\in\N$ and $r\geq 0$, the \emph{fill-in distance} of $(\Lambda_r,d_r)$ 
is defined by 
\[ \delta_r(M):=\inf_{\lambda_1,\ldots,\lambda_M\in\Lambda_r} 
\sup_{\lambda\in\Lambda_r}\min_{m=1,\ldots,M}d_r(\lambda,\lambda_m). \] 
For example, if $\Lambda_r\subset\R^m$ and $d_r$ is equivalent to the
Euclidean metric, then $\delta_r(M)\asymp M^{-1/m}$.

\begin{theorem} \label{thm:rate_nisio}
	Suppose that the Assumptions~\ref{ass:nisio},~\ref{ass:en}(i) and~\ref{ass:en_rate} 
	are satisfied.\ Then, for every $r\geq 0$, $\alpha'\in [0,\alpha)$ and $M,n\in\N$, 
	there exists $\Phi_{n,M}\in\mathcal{EN}^{\ReLU}_{\Ck,\Ck}$ with~$M+1$
	neurons such that
	\[ \|I_n f-\Phi_{n,M}f\|_{\alpha',\kappa}
	\leq\max\left\{L_r(4\delta_r(M))^{\beta_r},
	\big(C_{r,\alpha',\kappa}(4\delta_r(M))^{\beta_r}\big)^{1-\frac{\alpha'}{\alpha}}\right\}
	\quad\text{for all } f\in B_{\C^\alpha_\kappa}(r), \]
	where $C_{r,\alpha',\kappa}>0$ is a constant depending only on $r$, $\alpha'$ and $\kappa$.\
	Furthermore, the output ReLU-neural network of $\Phi_{n,M}$ has depth $\cO(\log_2(M))$ and width $\cO(M)$.
\end{theorem}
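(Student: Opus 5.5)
We follow the proof of Theorem~\ref{thm:uat_nisio}, replacing the qualitative $\delta$--$\epsilon$ modulus of Assumption~\ref{ass:en}(ii) by the H\"older-type bound of Assumption~\ref{ass:en_rate}. This keeps track of the explicit error in terms of the fill-in distance.

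\textbf{Step 1: choose the parameters.} Fix $r\geq 0$ and $M,n\in\N$. The set $\Lambda_{r,n}\subset\Lambda_r$ may be strictly smaller than $\Lambda_r$, so the centres realizing $\delta_r(M)$ need not lie in $\Lambda_{r,n}$. We therefore proceed as follows:
\begin{enumerate}
\item By definition of the infimum, choose $\tilde\lambda_1,\ldots,\tilde\lambda_M\in\Lambda_r$ with $\sup_{\lambda\in\Lambda_r}\min_m d_r(\lambda,\tilde\lambda_m)\leq 2\delta_r(M)$. If $\delta_r(M)=0$, replace $2\delta_r(M)$ by an arbitrarily small positive number and let it tend to zero at the end.
\item For each $m$ with $B_{d_r}(\tilde\lambda_m,2\delta_r(M))\cap\Lambda_{r,n}\neq\emptyset$, pick $\lambda_m$ in this intersection. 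Otherwise set $\lambda_m:=\lambda_{0,n}$, which lies in $\Lambda_{r,n}$ since $\eta_n(\lambda_{0,n})=0$ and $\lambda_{0,n}\in\Lambda_r$.
\end{enumerate}
By the triangle inequality, every $\lambda\in\Lambda_{r,n}$ then satisfies $d_r(\lambda,\lambda_m)\leq 4\delta_r(M)$ for some $m$. This is the source of the factor $4$.

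\textbf{Step 2: the $\|\cdot\|_\kappa$ bound.} Set $\Phi_{n,M}f:=\max_{m=0,\ldots,M}(I_{n,\lambda_m}f-\eta_n(\lambda_m)h_n)$ with $\lambda_0:=\lambda_{0,n}$. All $\eta_n(\lambda_m)$ are finite, so this is a well-defined envelope-neural operator with $M+1$ neurons.

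Since $\lambda_m\in\Lambda_{r,n}$, we have $\Phi_{n,M}f\leq I_nf$. Conversely, the reduction to $\Lambda_{r,n}$ in Assumption~\ref{ass:en_rate} together with the pointwise estimate
\[ |\sup_\lambda a_\lambda-\max_m a_{\lambda_m}|\leq\sup_\lambda\min_m|a_\lambda-a_{\lambda_m}| \]
gives
\[ \|I_nf-\Phi_{n,M}f\|_\kappa\leq L_r(4\delta_r(M))^{\beta_r}\quad\text{for all } f\in B_{\C^\alpha_\kappa}(r). \]
Here the weight $\kappa(x)$ is applied pointwise before taking suprema. This is exactly as in~\eqref{eq:thm:uat_nisio:proof1}, but with $\kappa$ kept inside the supremum so that the estimate is uniform.

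\textbf{Step 3: the H\"older seminorm.} Let $g:=I_nf-\Phi_{n,M}f$. Its $\alpha$-seminorm is at most $2e^{\omega h_n}r$, by Assumption~\ref{ass:en}(i) and the fact that suprema and maxima of $\alpha$-H\"older functions with weight $\bar\kappa$ keep the bound. Interpolating as in the proof of Theorem~\ref{thm:uat_nisio}, and using $\bar\kappa(x,y)\leq 2\min(\kappa(x),\kappa(y))$, gives
\[ [g]_{\alpha',\kappa}\leq(2e^{\omega h_n}r)^{\alpha'/\alpha}\big(4L_r(4\delta_r(M))^{\beta_r}\big)^{1-\alpha'/\alpha}. \]
To make the constant independent of $n$, bound $e^{\omega h_n}\leq e^{\omega\sup_k h_k}$, which is finite because $h_n\to 0$. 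Then
\[ \big(C_{r,\alpha',\kappa}(4\delta_r(M))^{\beta_r}\big)^{1-\alpha'/\alpha}\quad\text{with}\quad C_{r,\alpha',\kappa}:=4L_r(2e^{\omega\sup_k h_k}r)^{\alpha'/(\alpha-\alpha')} \]
(for $\alpha'<\alpha$) reproduces the claimed form. Taking the maximum of the sup-part and the seminorm-part bounds $\|g\|_{\alpha',\kappa}$.

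\textbf{Step 4: network size.} The map $y\mapsto\max_m(y_m-c_m)$ on $\R^{M+1}$ is realized by a binary tournament of pairwise maxima, using $\max(a,b)=b+\ReLU(a-b)$ and carrying identities via $\ReLU(x)-\ReLU(-x)$. This gives depth $\cO(\log_2 M)$ and width $\cO(M)$.

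The main obstacle is Step 1: $\Lambda_{r,n}$ varies with $n$, while the fill-in distance refers to $\Lambda_r$. The re-centering argument handles this at the cost of the factor $4$.
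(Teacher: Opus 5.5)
Your proposal is correct and follows the paper's proof essentially step for step. It uses the same re-centring of near-optimal centres $\tilde\lambda_m\in\Lambda_r$ onto points of $\Lambda_{r,n}$ (the source of the factor $4$), the same finite-max envelope estimate in $\|\cdot\|_\kappa$, and the same H\"older interpolation as in Theorem~\ref{thm:uat_nisio}. Your explicit ReLU max-tree simply replaces the paper's citation of \cite[Lemma~5.11]{petersen24}.

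One small remark on the case $\delta_r(M)=0$: your limiting device would not produce a single operator, but it is not needed. By pigeonhole on any $M+1$ distinct points, $\delta_r(M)=0$ forces $\Lambda_r$ to have at most $M$ points, so the infimum is attained and your construction goes through verbatim.
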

\begin{proof}
	Since $(\Lambda_r,d_r)$ is totally bounded, by definition of the infimum in $\delta_r(M)$, there exist some $\tilde{\lambda}_1,\ldots,\tilde{\lambda}_M\in\Lambda_r$ with $\sup_{\lambda\in\Lambda_r}\min_{m=1,...,M}d_r(\lambda,\tilde{\lambda}_m)\leq 2\delta_r(M)$. For every $m\in\{1,...,M\}$ with $\Lambda_{r,n} \cap B_{d_r}(\tilde{\lambda}_m,2\delta_r(M)) \neq \emptyset$, choose some $\lambda_m \in \Lambda_{r,n} \cap B_{d_r}(\tilde{\lambda}_m,2\delta_r(M))$. By repeating selected points if necessary, we obtain $\lambda_1,...,\lambda_M \in \Lambda_{r,n}$, for which the triangle inequality ensures
	\[ \sup_{\lambda\in\Lambda_{r,n}}\min_{m=1,\ldots,M}d_r(\lambda,\lambda_m)\leq 4\delta_r(M).\]
	By~\cite[Lemma~5.11]{petersen24}, we can implement 
	$\R^{M+1}\to\R,\; y\mapsto\max_{m=0,\ldots,M}(y_m-\eta_n(\lambda_m)h_n)$ with $\lambda_0:=\lambda_{0,n}$
	as ReLU network $\varphi\in\mathcal{NN}^{\ReLU}_{\R^{M+1},\R}$ of depth~$\cO(\log_2(M))$ 
	and width~$\cO(M)$.\ We define $\Phi_{n,M}\in\mathcal{EN}^{\ReLU}_{\Ck,\Ck}$ 
	by $\Phi_{n,M}f:=\varphi ((I_{n,\lambda_0} f)(\cdot),\ldots,(I_{n,\lambda_M}f)(\cdot))$
	and use Assumption~\ref{ass:en_rate} to obtain
	\begin{align*}
		\|I_n f-\Phi_{n,M} f\|_\kappa 
		&=\Big\|\sup_{\lambda\in\Lambda_{r,n}}(I_{n,\lambda}f-\eta_n(\lambda)h_n)
		-\max_{m=0,\ldots,M}(I_{n,\lambda_m}f-\eta_n(\lambda_m)h_n)\Big\|_\kappa \\
		&\leq\sup_{\lambda\in\Lambda_{r,n}}
		\|(I_{n,\lambda}f-\eta_n(\lambda)h_n)-(I_{n,\lambda_{m(\lambda)}}f-\eta_n(\lambda_{m(\lambda)})h_n)\|_\kappa \\
		&\leq L_r\sup_{\lambda\in\Lambda_{r,n}}d_r(\lambda,\lambda_{m(\lambda)})^{\beta_r} 
		\leq L_r(4\delta_r(M))^{\beta_r}
	\end{align*}
	for all $f\in B_{\C^\alpha_\kappa}(r)$, where $m(\lambda)\in\argmin_{m=1,\ldots,M}d_r(\lambda,\lambda_m)$.\
	Estimating the H\"older seminorm similarly to the proof of Theorem~\ref{thm:uat_nisio}
	yields the claim.
\end{proof}

Finally, by scaling the number of neurons $(M_n)_{n\in\N}$ depending on the step-size
$(h_n)_{n\in\N}$, we obtain a sequence $(\Phi_{n,M_n})_{n\in\N}$ of neural operators 
which generate the same strongly continuous convex monotone semigroup $(S(t))_{t \geq 0}$ 
as the Chernoff one-step operators $(I_n)_{n\in\N}$.\ To this end, we recall that
the covering number of $\Lambda_r$ is defined by
\[ \mathfrak{N}_r(\epsilon):=\min\left\{M\in\N\colon\exists\lambda_1,\ldots,\lambda_M\in\Lambda_r
\;\text{ s.t. } \bigcup_{m=1}^M \mathring{B}_{d_r}(\lambda_m,\epsilon)=\Lambda_r\right\}
\quad\text{for all } \epsilon>0. \]
It holds $\mathfrak{N}_r(\epsilon)=\min\{M\in\N\colon\delta_r(M)<\epsilon\}$ 
and $\delta_r(M)=\inf\{\epsilon>0\colon\mathfrak{N}_r(\epsilon)\leq M\}$.

\begin{proposition} \label{prop:approx_nisio_rate}
	Suppose that the Assumptions~\ref{ass:nisio} and~\ref{ass:en_rate} are satisfied 
	and denote by $(S(t))_{t \geq 0}$ the semigroup from Theorem~\ref{thm:nisio}.\ 
	Let $(r_n)_{n\in\N}\subset (0,\infty)$ be a sequence with $r_n\to\infty$. Choose 
	$\epsilon_n>0$ with $L_{r_n}\epsilon_n^{\beta_{r_n}}=o(h_n)$ and $M_n\geq\mathfrak{N}_{r_n}(\epsilon_n/4)$.\
	Then, there exists a sequence $(\Phi_{n,M_n})_{n\in\N}$ of envelope-neural
	operators $\Phi_{n,M_n}\in\mathcal{EN}^{\ReLU}_{\Ck,\Ck}$ 
	with $M_n+1$ neurons such that
	\[ S(t)f=\lim_{n\to\infty}\Phi_{n,M_n}^{k_n^t}f 
	\quad\text{for all } t\geq 0 \text{ and } f\in\Ck. \]
\end{proposition}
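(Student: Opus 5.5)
The idea is to treat the sequence $(\Phi_{n,M_n})_{n\in\N}$ itself as a new sequence of Chernoff one-step operators. We then check that it satisfies Assumption~\ref{ass:chernoff} with the same infinitesimal generator on $\Cbi$ as $(I_n)_{n\in\N}$, and conclude with the uniqueness part of Theorem~\ref{thm:chernoff}.

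\emph{Construction.} For each $n$, we choose centres $\lambda_1,\ldots,\lambda_{M_n}\in\Lambda_{r_n,n}$ exactly as in the proof of Theorem~\ref{thm:rate_nisio}. Since $M_n\geq\mathfrak{N}_{r_n}(\epsilon_n/4)$, we have $\delta_{r_n}(M_n)<\epsilon_n/4$, so every $\lambda\in\Lambda_{r_n,n}$ lies within $d_{r_n}$-distance $4\delta_{r_n}(M_n)\leq\epsilon_n$ of some centre. We then set
\[ \Phi_{n,M_n}f:=\max_{m=0,\ldots,M_n}\big(I_{n,\lambda_m}f-\eta_n(\lambda_m)h_n\big),\qquad\lambda_0:=\lambda_{0,n}. \]
Assumption~\ref{ass:en_rate} then gives
\[ \|I_nf-\Phi_{n,M_n}f\|_\kappa\leq L_{r_n}\epsilon_n^{\beta_{r_n}}=o(h_n) \]
uniformly over $f\in B_{\C^\alpha_\kappa}(r_n)$.

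\emph{Assumption~\ref{ass:chernoff} for $(\Phi_{n,M_n})_n$.} The operator $\Phi_{n,M_n}$ has exactly the form~\eqref{eq:nisio}, with $\Lambda$ replaced by the finite set $\Lambda^{(n)}:=\{\lambda_0,\ldots,\lambda_{M_n}\}$. Hence conditions (i), (ii), (iv) and (v) follow verbatim as for $I_n$:
\begin{enumerate}
\item for (i), convexity and monotonicity come from the linear monotone $I_{n,\lambda}$ via Assumption~\ref{ass:nisio}(ii), and $\Phi_{n,M_n}0=0$ because $\eta_n\geq0$ and $\eta_n(\lambda_0)=0$;
\item for (ii), the Lipschitz bound comes from Assumption~\ref{ass:nisio}(iii) and the fact that a maximum is $1$-Lipschitz;
\item for (iv) and (v), the statements hold uniformly in $\lambda$ by Assumption~\ref{ass:nisio}(iv) and (vi), and the maximum over $\lambda$ preserves them.
\end{enumerate}
Condition~(iii) needs care. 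I would get it by monotone sandwiching: $I_{n,\lambda_0}\leq\Phi_{n,M_n}\leq I_n$ pointwise on the relevant functions. Through the equivalent reformulation via~\cite[Lemma~C.2]{denk25}, one has $\Phi_{n,M_n}^k f_j\leq I_n^k f_j$ for $f_j\downarrow0$ with $f_j\geq0$, by monotonicity. The case of general $f_j\downarrow 0$ reduces to this after replacing $f_j$ by $f_j^+$.

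\emph{Generator.} Fix $f\in\Cbi$. Then $f\in\Lipb(r_0)$ for some $r_0$. Assuming $\|f\|_{\alpha,\kappa}$ is finite (for bounded Lipschitz $f$ this holds since $\kappa$ is bounded), we get $f\in B_{\C^\alpha_\kappa}(r_n)$ for all large $n$, because $r_n\to\infty$. Consequently
\[ \Big\|\frac{\Phi_{n,M_n}f-f}{h_n}-\frac{I_nf-f}{h_n}\Big\|_\kappa\leq\frac{L_{r_n}\epsilon_n^{\beta_{r_n}}}{h_n}\to0, \]
so the limit $\Phi'(0)f=I'(0)f$ exists in $\Ck$, which gives condition~(vi).

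\emph{Conclusion and main obstacle.} Theorem~\ref{thm:chernoff} yields a semigroup $T(t)f=\lim_n\Phi_{n,M_n}^{k_n^t}f$ with generator agreeing with $A$ on $\Cbi$ and satisfying properties (iv) and (v) of Theorem~\ref{thm:chernoff}. The uniqueness statement of Theorem~\ref{thm:chernoff} then gives $T=S$. I expect the main obstacle to be condition~(iii). There the sandwich $I_{n,\lambda_0}\leq\Phi_{n,M_n}\leq I_n$ should control iterates from above; lower control is not needed, since $\Phi_{n,M_n}^kf_j\geq\Phi_{n,M_n}^k0=0$ for $f_j\geq0$.
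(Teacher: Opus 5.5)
Your proposal is correct and follows the paper's proof. You build $\Phi_{n,M_n}$ as in Theorem~\ref{thm:rate_nisio}, use $\|I_nf-\Phi_{n,M_n}f\|_\kappa\le L_{r_n}\epsilon_n^{\beta_{r_n}}=o(h_n)$ to get $\Phi'(0)f=Af$ on $\Cbi$, check Assumption~\ref{ass:chernoff} for $(\Phi_{n,M_n})_{n\in\N}$, and conclude with the uniqueness part of Theorem~\ref{thm:chernoff}. The paper only asserts that Assumption~\ref{ass:nisio} gives Assumption~\ref{ass:chernoff} for $(\Phi_{n,M_n})_{n\in\N}$, so two of your steps usefully fill in details it leaves implicit: the sandwich $0\le\Phi_{n,M_n}^k f_j\le I_n^k f_j$ for condition~(iii), and the observation that $f\in B_{\C^\alpha_\kappa}(r_n)$ for large $n$.
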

\begin{proof}
	For every $f\in\Cbi$ and $K\Subset\Rd$, Assumption~\ref{ass:nisio}(vii) 
	and Theorem~\ref{thm:rate_nisio} imply
	\begin{align*}
		\lim_{n\to\infty}\left\|\frac{\Phi_{n,M_n}f-f}{h_n}-Af\right\|_{\infty,K}
		&\leq c\lim_{n\to\infty}\frac{\|\Phi_{n,M_n}f-I_n f\|_\kappa}{h_n} 
		+\lim_{n\to\infty}\left\|\frac{I_n f-f}{h_n}-Af\right\|_{\infty,K} \\
		&\leq c\lim_{n\to\infty}\frac{L_{r_n}(4\delta_{r_n}(M_n))^{\beta_{r_n}}}{h_n} 
		\leq c\lim_{n\to\infty}\frac{L_{r_n}\epsilon_n^{\beta_{r_n}}}{h_n}=0,
	\end{align*}
	where $c:=\sup_{x\in K}\frac{1}{\kappa(x)}$.\ Furthermore, Assumption~\ref{ass:nisio}
	guarantees that $(\Phi_{n,M_n})_{n\in\N}$ is a family of one-step 
	operators $\Phi_{n,M_n}\colon\Ck\to\Ck$ satisfying Assumption~\ref{ass:chernoff}.\
	By Theorem~\ref{thm:chernoff}, there exists another strongly continuous convex
	monotone semigroup $(T(t))_{t\geq 0}$ on~$\Ck$ given by
	\[ T(t)f:=\lim_{n\to\infty}\Phi_{n,M_n}^{k_n^t}f 
	\quad\text{for all } t\geq 0 \text{ and } f\in\Ck \]
	with generator $B\colon D(B)\to\Ck$ satisfying (iv) and~(v) of Theorem~\ref{thm:chernoff}, 
	$\Cbi\subset D(B)$ and 
	\[ Af=Bf \quad\text{for all } f\in\Cbi. \]
	Consequently, we obtain $S(t)f=T(t)f$ for all $t\geq 0$ and $f\in\Ck$. 
\end{proof}

\subsection{Quantitative approximation of the semigroup}

Finally, by combining Theorem~\ref{thm:rate_nisio} with the convergence rates
in~\cite{blessing25}, we obtain an explicit error bound for the approximation 
of the strongly continuous convex monotone semigroup $(S(t))_{t\geq 0}$ by the
envelope-neural operators.\ The next assumption is a particular case
of~\cite[Assumption~2.5 and~2.7]{blessing25}.\ Let $\zeta\colon\R_+\times\Rd\to\R$ 
be an infinitely differentiable function satisfying
\[ \supp(\zeta)\subset [0,1]\times B_{\Rd}(1) \quad\text{and}\quad
\int_{\R_+\times\Rd}\zeta(t,x)\,\d t\,\d x=1. \]
For every locally bounded measurable function $u\colon\R_+\times\Rd\to\R$,
$\epsilon=(\epsilon_1,\epsilon_2)\in (0,\infty)^2$, $t\geq 0$ and $x\in\Rd$,
we define 
$\zeta^\epsilon(t,x):=\epsilon_1^{-1}\epsilon_2^{-d}\zeta(\epsilon_1^{-1}t,\epsilon_2^{-1}x)$
and 
\[ u^\epsilon(t,x):=(u*\zeta^\epsilon)(t,x)
:=\int_{\R_+\times\Rd}u(s+t,x+y)\zeta^\epsilon(s,y)\,\d s\,\d y. \]

\begin{assumption} \label{ass:rate_semigroup}
	Suppose that the following conditions are satisfied:
	\begin{enumerate}
		\item There exist $c\geq 0$ and $\epsilon_0\in (0,1]$ with 
		\[ \|I_n (\tau_x f)-\tau_x I_n f\|_\kappa\leq crh_n|x| \]
		for all $x\in B_{\Rd}(\epsilon_0)$, $r\geq 0$, $f\in\Lipb(r)$ and $n\in\N$.
		\item There exist a function $a_1\colon\R_+\to\R_+$ and constants $a_2,p \geq 0$ with
		\[ \|I_n f-f\|_\kappa
		\leq \left( a_1\big(d^{-\frac{1}{2}}\|\nabla f\|_\infty\big)+a_2\big(d^{-1}\|\nabla^2 f\|_\infty\big)^p \right) h_n 
		\quad \text{for all } n\in\N \text{ and } f\in\Cbi. \]
		\item There exist a function $\theta\colon\R_+^2\to\R_+$ and constants
		$\gamma^\pm_1\ldots,\gamma^\pm_N\geq 0$ with
		\begin{align*}
			\partial_t u^\epsilon(t)-Au^\epsilon(t)-\frac{u^\epsilon(t)-I_n u^\epsilon(t-h_n)}{h_n}
			&\leq\frac{\theta(r,t)}{\kappa}\max_{i=1,\ldots,N}h_n^{\gamma^+_i}\epsilon_2^{-\gamma^-_i}, \\
			\partial_t u_n^\epsilon(t)-Au_n^\epsilon(t)-\frac{u_n^\epsilon(t)-I_n u_n^\epsilon(t-h_n)}{h_n} 
			&\geq -\frac{\theta(r,t)}{\kappa}\max_{i=1,\ldots,N}h_n^{\gamma^+_i}\epsilon_2^{-\gamma^-_i}
		\end{align*}
		for all $n\in\N$, $r\geq 0$, $f\in\Lipb(r)$, $t\geq h_n$ and $\epsilon=(\epsilon_1,\epsilon_2)\in (0,\epsilon_0]^2$ 
		with $\epsilon_1=\epsilon_2^{1+p}\geq h_n$, where $u(t):=S(t)f$ and 
		$u_n(t):=I(\pi_n^t)f$.
	\end{enumerate}
\end{assumption}

\begin{theorem} \label{thm:approx_nisio_rate}
	Let the Assumptions~\ref{ass:nisio},~\ref{ass:en}(i),~\ref{ass:en_rate} and~\ref{ass:rate_semigroup}
	be satisfied and denote by $(S(t))_{t \geq 0}$ the semigroup from Theorem~\ref{thm:nisio}.\ 
	Then, for every $r,T\geq 0$, there exists $c_{r,T}\geq 0$ such that, for every 
	$M_n\geq\mathfrak{N}_{e^{\omega T}r}(\epsilon_n/4)$ with 
	$\epsilon_n^{\beta_{e^{\omega T}r}}=\cO(h_n^\gamma/k^T_n)$,
	there exists $\Phi_{n,M_n}\in\mathcal{EN}^{\ReLU}_{\Ck,\Ck}$
	with $M_n+1$ neurons such that
	\[ \big\|S(t)f-\Phi_{n,M_n}^{k^t_n}f\big\|_\kappa\leq c_{r,T}h_n^{\gamma} 
	\quad \text{for all } t\in [0,T] \text{ and } f\in\Lipb(r), \]
	where $\gamma:=\min\big\{\frac{1}{1+p},\frac{\gamma^+_1}{1+\gamma^-_1},\ldots,\frac{\gamma^+_N}{1+\gamma^-_N}\big\}$.
\end{theorem}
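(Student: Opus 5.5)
We split the error by the triangle inequality into a Chernoff part and a neural part:
\[ \big\|S(t)f-\Phi_{n,M_n}^{k^t_n}f\big\|_\kappa\leq\|S(t)f-I(\pi^t_n)f\|_\kappa+\big\|I_n^{k^t_n}f-\Phi_{n,M_n}^{k^t_n}f\big\|_\kappa. \]
The first term is handled by the convergence rate result in~\cite{blessing25}. Assumption~\ref{ass:rate_semigroup} is a special case of~\cite[Assumptions~2.5 and~2.7]{blessing25}, and Assumption~\ref{ass:nisio} yields the remaining structural conditions via Theorem~\ref{thm:nisio}. Hence the main rate theorem there gives a constant $c^{(1)}_{r,T}$ with
\[ \|S(t)f-I(\pi^t_n)f\|_\kappa\leq c^{(1)}_{r,T}h_n^\gamma \quad\text{for all } t\in[0,T],\ f\in\Lipb(r), \]
with $\gamma$ exactly as in the statement. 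The exponent $\frac{1}{1+p}$ comes from Assumption~\ref{ass:rate_semigroup}(ii). The exponents $\frac{\gamma_i^+}{1+\gamma_i^-}$ come from optimizing $\epsilon_2$ in (iii) under the coupling $\epsilon_1=\epsilon_2^{1+p}$. I will take this as given and only check that the hypotheses match.

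For the second term, I repeat the telescoping argument of Theorem~\ref{thm:approx_nisio}, now with the rate of Theorem~\ref{thm:rate_nisio}. Set $R:=e^{\omega T}r$. The argument needs every iterate to stay in $B_{\C^\alpha_\kappa}(R)$, and two facts give this. First, $\Phi_{n,M}f$ is a finite maximum of $I_{n,\lambda_m}f-\eta_n(\lambda_m)h_n$. Second, each $I_{n,\lambda}$ satisfies Assumption~\ref{ass:en}(i). A max of functions has Hölder seminorm at most the max of the seminorms, and the weighted sup part is also controlled since $\eta_n(\lambda_0)=0$ and $\eta_n\ge0$. So $\|\Phi^{l}_{n,M}f\|_{\alpha,\kappa}\le e^{\omega lh_n}\|f\|_{\alpha,\kappa}\le R$ for $l\le k^T_n$.

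Here is the main obstacle. The statement is for $f\in\Lipb(r)$, which sits in $\C^1_1$, not necessarily in $B_{\C^\alpha_\kappa}(r)$. Since $\kappa$ is bounded, $\|f\|_{\alpha,\kappa}\le c_\kappa r$ for $\alpha\le1$, because $|x-y|^{\alpha}\ge\ldots$ fails only for small distances. Concretely, the Hölder quotient is bounded by $r$ for $|x-y|\le1$ and by $2r$ otherwise. So I would replace $r$ by $c_\kappa r$ and absorb the change into the constant. Granting this (or reading the statement with the radius adjusted accordingly), choose $M_n\ge\mathfrak{N}_{R}(\epsilon_n/4)$. Then $4\delta_R(M_n)\le\epsilon_n$, and Theorem~\ref{thm:rate_nisio} (its $\|\cdot\|_\kappa$ part, i.e.\ $\alpha'=0$) gives $\|I_ng-\Phi_{n,M_n}g\|_\kappa\le L_R\epsilon_n^{\beta_R}$ for all $g\in B_{\C^\alpha_\kappa}(R)$.

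Telescoping as in~\eqref{eq:approx_nisio:uat_proof2} with the $\|\cdot\|_\kappa$-Lipschitz bound $e^{\omega kh_n}$ of $I_n^k$ then gives
\[ \big\|I_n^{k^t_n}f-\Phi_{n,M_n}^{k^t_n}f\big\|_\kappa\le e^{\omega T}k^T_nL_R\epsilon_n^{\beta_R}\le e^{\omega T}L_R C h_n^\gamma, \]
using $\epsilon_n^{\beta_R}\le Ch_n^\gamma/k^T_n$. Adding the two bounds yields $c_{r,T}:=c^{(1)}_{r,T}+e^{\omega T}L_RC$. This works uniformly in $t\in[0,T]$ because $k^t_n\le k^T_n$.
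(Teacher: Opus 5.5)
Your proposal is correct and follows the paper's proof essentially step for step. The paper uses the same triangle-inequality split, \cite[Theorem~2.9]{blessing25} for $\|S(t)f-I(\pi^t_n)f\|_\kappa$, and the telescoping estimate of Theorem~\ref{thm:approx_nisio} combined with the one-step bound $L_{e^{\omega T}r}(4\delta_{e^{\omega T}r}(M_n))^{\beta_{e^{\omega T}r}}\le L_{e^{\omega T}r}\epsilon_n^{\beta_{e^{\omega T}r}}$ from Theorem~\ref{thm:rate_nisio}. The mismatch you flag between $\Lipb(r)$ and $B_{\C^\alpha_\kappa}(r)$ is passed over silently in the paper; it is harmless when $\kappa\equiv 1$ and $\alpha=1$, where the two sets coincide. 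In general, your remedy of enlarging the radius by $c_\kappa$, with the covering-number condition stated at that radius, is the right way to make that step rigorous.
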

\begin{proof}
	We use~\cite[Theorem~2.9]{blessing25}, argue similarly as in the proof of 
	Theorem~\ref{thm:approx_nisio} and apply Theorem~\ref{thm:rate_nisio} to obtain
	constants $\tilde{c}_{r,T}, c_{r,T}\geq 0$ with
	\begin{align*}
		\big\|S(t)f-\Phi_{n,M_n}^{k^t_n}f\big\|_\kappa
		&\leq\|S(t)f-I(\pi^t_n)f\|_\kappa+\big\|I_n^{k^t_n}f-\Phi_{n,M_n}^{k^t_n}f\big\|_\kappa \\
		&\leq\tilde{c}_{r,T}h_n^\gamma+\sum_{l=1}^{k^t_n}
		\big\|I_n^{k^t_n-l}I_n\Phi_{n,M_n}^{l-1}f-I_n^{k^t_n-l}\Phi_{n,M_n}\Phi_{n,M_n}^{l-1}f\big\|_\kappa \\
		&\leq \tilde{c}_{r,T}h_n^\gamma+e^{\omega T}\sum_{l=1}^{k^t_n}
		\|I_n\Phi_{n,M_n}^{l-1}f-\Phi_{n,M_n}\Phi_{n,M_n}^{l-1}f\|_\kappa \\
		&\leq\tilde{c}_{r,T}h_n^\gamma
		+e^{\omega T}k^t_n L_{e^{\omega T}r}(4\delta_{e^{\omega T}r}(M_n))^{\beta_{e^{\omega T}r}} \\
		&\leq\tilde{c}_{r,T}h_n^\gamma+e^{\omega T}k^T_n L_{e^{\omega T}r}\epsilon_n^{\beta_{e^{\omega T}r}} 
		\leq c_{r,T}h_n^\gamma. \qedhere 
	\end{align*}
\end{proof}

The previous theorem might seem very abstract at first, but in many applications
verifying the assumptions and deriving explicit convergence rates is rather straightforward,
see~\cite[Section~4]{blessing25}.\ For instance, in case of the optimal control problem
studied in Subsection~\ref{sec:soc} below, it follows from~\cite[Theorem~4.3]{blessing25}
that $\gamma=\frac{1}{4}$. Furthermore, it holds $\beta_r=\alpha$ and $\Lambda\subset\Rd\times\mathbb{S}^d_+\cong\R^\frac{d(d+3)}{2}$.\
Consequently, one should choose 
\[ M_n+1\geq\mathfrak{N}_{e^{\omega T}r}(\epsilon_n)+1\asymp\epsilon_n^{-\frac{d(d+3)}{2}} 
\asymp\left(\frac{h_n^\gamma}{k^T_n}\right)^{-\frac{d(d+3)}{2\alpha}} 
\asymp h_n^{-\frac{d(d+3)(\gamma+1)}{2\alpha}}=h_n^{-\frac{5d(d+3)}{8\alpha}} \]
neurons in the envelope-neural operator.

\section{Numerical experiments}
\label{sec:numerics}

In this section, we illustrate with three numerical examples\footnote{The 
	numerical experiments have been implemented in \texttt{Python} and were 
	executed on a high-performance computing (HPC) cluster of ETH Zurich. 
	The code can be found at the URL~\url{https://github.com/sgarale/chernoff_neural}} 
how Chernoff-neural operators and envelope-neural operators can be effectively 
used to learn the Chernoff one-step operators $(I_n)_{n\in\N}$ and therefore
the corresponding strongly continuous convex monotone semigroup $(S(t))_{t \geq 0}$.
The examples follow an increasing level of abstraction and are carefully chosen
to highlight the key aspects of our approach.

\subsection{Splitting schemes for semilinear PDEs}
\label{sec:pde}

In the first example, we approximate the solution operator $f\mapsto S(t)f:=u(t,\cdot)$ 
of the semilinear partial differential equation (PDE)
\begin{equation} \label{eq:pde}
	\begin{cases}
		\partial_t u(t,x)=\frac{1}{2}\Delta u(t,x)+H\big(\nabla u(t,x)\big), & (t,x)\in (0,\infty)\times\Rd, \\
		\;\;\, u(0,x)=f(x), & x\in\Rd,
	\end{cases}    
\end{equation}
where $\Delta$ denotes the Laplacian, $\nabla$ represents the gradient and
$H\colon\Rd\to\R$ is a convex Hamiltonian whose convex conjugate is given by
\[ H^*(y):=\sup_{x\in\Rd}\big(y^\top x-H(x)\big) \quad\text{for all } y\in\Rd. \]
We split the differential operator in equation~\eqref{eq:pde} into the linear 
diffusion part $A^{\mathrm{diff}}f:=\frac{1}{2}\Delta f$ and the non-linear 
first-order part $A^{\mathrm{ham}}f:=H(\nabla f)$.\ For a fixed sequence 
$(h_n)_{n\in\N}\subset (0,\infty)$ with $h_n\to 0$, the corresponding Chernoff
one-step operators are given by
\[ (J^{\mathrm{diff}}_n f)(x):=\mathbb{E}[f(x+W_{h_n})] \quad\text{and}\quad 
(J^{\mathrm{ham}}_n f)(x):=\sup_{\lambda\in\Lambda}\big(f(x+\lambda h_n)-H^*(\lambda)h_n\big) \]
for all $n\in\N$, $f\in\Cb$ and $x\in\Rd$, 
where $(W_t)_{t\geq 0}$ is a $d$-dimensional Brownian motion satisfying $\cov(W_1)=I_d$
and $\Lambda:=\{H^*<\infty\}$. For every $f\in\Cbi$, It\^o's and Taylor's formula imply 
\[ \lim_{n\to\infty}\frac{J^{\mathrm{diff}}_nf-f}{h_n}=A^{\mathrm{diff}}f := \frac{1}{2}\Delta f
\quad\text{and}\quad \lim_{n\to\infty}\frac{J^{\mathrm{ham}}_n f-f}{h_n}=A^{\mathrm{ham}}f:=H(\nabla f). \]
For every $n\in\N$, $f\in\Cb$ and $x\in\Rd$, we define
\begin{equation} \label{eq:pde:chernoff}
	(I_n f)(x):=(J_n^{\mathrm{ham}}J_n^{\mathrm{diff}}f)(x)
	=\sup_{\lambda\in\Lambda}\big(\E[f(x+\lambda h_n+W_{h_n})]-H^*(\lambda)h_n\big).
\end{equation}
It follows from~\cite[Theorem~3.6]{blessing23} that there exists a strongly continuous
convex monotone semigroup $(S(t))_{t\geq 0}$ on~$\Cb$ given by
\[ S(t)f:=\lim_{n\to\infty}I(\pi_n^t)f \quad\text{for all } t\geq 0 \text{ and } f\in\Cb \]
whose generator is given by 
\[ Af=\frac{1}{2}\Delta f+H(\nabla f) \quad\text{for all } f\in\Cbi. \]
Furthermore, the unique (viscosity) solution of equation~\eqref{eq:pde}
is given by $u(t,\cdot):=S(t)f$ for all $t\geq 0$. In the following, we implement
the splitting scheme $u(t,\cdot) \approx I_n^{k^t_n}f$, where each diffusion step
is followed by a non-linear transport step.\ We will approximate the one-step 
operator~$I_n$ both with a Chernoff-neural operator 
$\Phi_n\in\mathcal{CN}^{\H,\rho,\L}_{\C_{\kappa_0},\C^\alpha_\kappa}$ and an 
envelope-neural operator $\Phi_n\in\mathcal{EN}^{\ReLU}_{\Ck,\Ck}$.

\begin{lemma} \label{lem:pde}
	Let $\Lambda$ be bounded and $H^*|_\Lambda$ Lipschitz continuous.\ Furthermore,
	let $\alpha:=1$, $\kappa\equiv 1$, $\kappa_0(x):=(1+|x|^{q_0})^{-1}$ and
	$\K:=\{\kappa\}\cup\{(1+|\cdot|^{q'})^{-1}\colon q'\in (2,q_0]\}$ for some 
	$q_0\in (2,\infty)$.\ Then, the operators $(I_n)_{n\in\N}$ defined by 
	equation~\eqref{eq:pde:chernoff} satisfy the 
	Assumptions~\ref{ass:chernoff},~\ref{ass:cn_uat},~\ref{ass:nisio},~\ref{ass:en} 
	and~\ref{ass:en_rate}.
\end{lemma}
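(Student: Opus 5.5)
The plan is to verify the assumptions one block at a time. The structural observation that drives everything is that $I_n$ has the envelope form~\eqref{eq:nisio} with
\[ (I_{n,\lambda}f)(x):=\E[f(x+\lambda h_n+W_{h_n})], \qquad \eta_n:=\eta:=H^*. \]
Each $I_{n,\lambda}$ is a convolution of a translate of $f$ with the Gaussian kernel $\gamma_{h_n}$, i.e.\ it is integration against a probability measure. Most of the conditions then reduce to estimates for translations and Gaussian convolutions in the weighted norms $\|\cdot\|_{\alpha',\kappa'}$ with $\kappa'=(1+|\cdot|^{q'})^{-1}$.

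\textbf{Assumptions~\ref{ass:chernoff} and~\ref{ass:nisio}.} Since $H^*$ is convex and lower semicontinuous, $H^*(\lambda_0)=0$ at $\lambda_0:=\arg\min H^*$ after normalising (or we set $\eta_n:=H^*-\min H^*$, using $H(0)=-\min H^*$). This gives Assumption~\ref{ass:nisio}(i). Linearity and monotonicity are clear, and $\omega=0$ works for $\kappa\equiv1$. Translation invariance $I_{n,\lambda}\tau_x=\tau_xI_{n,\lambda}$ yields (iv) with zero error, and (vi) holds because convolution with a probability measure preserves both Lipschitz constants and sup norms. For (v), and for the corresponding condition in Assumption~\ref{ass:chernoff}(iii), we use that $\Lambda$ is bounded: $I(\pi^t_n)f(x)$ depends on $f$ only through values at $x+\lambda$-shifts of total size at most $T\sup_{\lambda\in\Lambda}|\lambda|$ plus Brownian increments. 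Splitting on $\{|W_t|\le R\}$ and using Gaussian tails gives $c=1$, $K'=K+B(T\sup|\lambda|+R)$ and an error $2r\,\mathbb{P}(|W_T|>R)<\epsilon$. For (vii') we take $\Lambda_0=\Lambda$. Then (a) and (b) are trivial. In (c), $\sup_\Lambda H^*<\infty$ holds because $H^*$ is Lipschitz on the bounded set $\Lambda$, and for $f\in\Cbi$ a Taylor/Itô expansion $\E f(x+\lambda h+W_h)-f(x)=h(\lambda^\top\nabla f+\tfrac12\Delta f)+O(h^{3/2})$ holds uniformly in bounded $\lambda$. This yields $A_\lambda f=\lambda^\top\nabla f+\tfrac12\Delta f$ and, taking the supremum, $Af=\frac12\Delta f+H(\nabla f)$ by Fenchel duality. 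Assumption~\ref{ass:chernoff} then follows as in Theorem~\ref{thm:nisio}, and is in any case covered by~\cite[Theorem~3.6]{blessing23}.

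\textbf{Assumptions~\ref{ass:en} and~\ref{ass:en_rate}.} Condition (i) of Assumption~\ref{ass:en} holds because translations and Gaussian averaging do not increase $\|\cdot\|_\infty$ or the Lipschitz constant. For the remaining conditions we take $\Lambda_r:=\Lambda$ (after closure) with the Euclidean metric, which is totally bounded. We also have $\Lambda_{r,n}=\Lambda$. For $f\in B_{\C^1_1}(r)$,
\[ \|I_{n,\lambda_1}f-I_{n,\lambda_2}f\|_\infty\le r h_n|\lambda_1-\lambda_2|, \qquad |H^*(\lambda_1)-H^*(\lambda_2)|h_n\le L_{H^*}h_n|\lambda_1-\lambda_2|. \]
Hence Assumption~\ref{ass:en_rate} holds with $\beta_r=1$ and $L_r=(r+L_{H^*})\sup_n h_n$, and the uniform-continuity part of Assumption~\ref{ass:en}(ii) follows.

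\textbf{Assumption~\ref{ass:cn_uat}, the main obstacle.} Here we extend $I_n$ to $\C_{\kappa_0}$ by the same formula. This is finite because Gaussian moments of all orders exist and $\Lambda$ is bounded, and continuity is clear. Condition (i) holds since $\kappa_{q_1}\llsim\kappa_{q_2}$ iff $q_1>q_2$, and we can interpolate between $\kappa\equiv1$ (formally $q=0$) and any $q'>2$, or between two exponents $q'$. The core is (ii): we need $\|I_nf\|_{\alpha',\kappa'}\le e^{\omega h_n}\|f\|_{\alpha',\kappa'}$ with a polynomial weight. The key estimate is
\[ \frac{\kappa'(x)}{\kappa'(x+y)}=\frac{1+|x+y|^{q'}}{1+|x|^{q'}}\le 1+C(|y|+|y|^{q'}), \]
which we integrate against $y=\lambda h_n+W_{h_n}$. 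This uses $\E|W_{h}|=O(h^{1/2})$, and the $h^{1/2}$ term is the trouble, since it does not give $e^{\omega h}$. We first try a symmetrisation argument: because $W$ is symmetric, the linear term in the expansion of $|x+y|^{q'}$ cancels, leaving $1+C(h_n+h_n^{q'/2})$ up to higher moments, with $q'>2$ ensuring the second-order Taylor expansion is valid. The supremum over $\lambda$ and the penalty then preserve the bound, because $|\sup_\lambda a_\lambda-\sup_\lambda b_\lambda|\le\sup_\lambda|a_\lambda-b_\lambda|$, applied both to the sup term and to Hölder quotients. The same kernel estimate applies to the Hölder seminorm with $\bar\kappa$. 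Finally, (iii) follows from (ii) by the same sup-difference inequality, using that $I_{n,\lambda}$ is linear: $\|I_nf-I_ng\|_{\alpha',\kappa'}\le\sup_\lambda\|I_{n,\lambda}(f-g)\|_{\alpha',\kappa'}$. For $\alpha'=0$ this is directly the weighted sup bound, and for $\alpha'>0$ it needs a pointwise estimate of differences of suprema, which again reduces to the linear operators applied to $f-g$.
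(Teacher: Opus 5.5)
Your verification of Assumption~\ref{ass:cn_uat}(iii) breaks down for $\alpha'\in(0,1)$. The inequality $|\sup_\lambda a_\lambda-\sup_\lambda b_\lambda|\le\sup_\lambda|a_\lambda-b_\lambda|$ controls only first differences. It therefore gives (ii) and the weighted sup-norm part of (iii). The H\"older seminorm of $I_nf-I_ng$, however, involves the mixed difference $(I_nf-I_ng)(x)-(I_nf-I_ng)(y)$. This is not dominated by $\sup_\lambda|I_{n,\lambda}(f-g)(x)-I_{n,\lambda}(f-g)(y)|$. A two-element example already shows this: take $a_1(x)=x$, $b_1(x)=x-\epsilon$, $a_2=b_2=0$. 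Then $\max(a_1,a_2)-\max(b_1,b_2)$ has $\alpha'$-H\"older seminorm $\epsilon^{1-\alpha'}$, although each $a_\lambda-b_\lambda$ is constant in $x$.

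In fact the Lipschitz property itself is false here. Take $d=1$ and $H(p)=|p|$. Then $\Lambda=[-1,1]$, $H^*=0$ on $\Lambda$, and $(I_nf)(x)=\max_{|z-x|\le h_n}u_f(z)$ with $u_f:=\E[f(\cdot+W_{h_n})]$. Let $f:=e^{\omega^2h_n/2}\cos(\omega\,\cdot)$, so that $u_f=\cos(\omega\,\cdot)$, and choose $2\pi/\omega<2h_n$. Let $g:=f+\epsilon\tanh$ and $\psi:=\E[\tanh(\cdot+W_{h_n})]$, which is strictly increasing. Then:
\begin{itemize}
\item $I_nf\equiv 1$, since every window is longer than a period.
\item Let $z_1<z_2$ be consecutive maxima of $u_f$ and $x_0:=z_2-h_n$. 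The window at $x_0$ contains $z_2$, so $(I_ng)(x_0)\ge 1+\epsilon\psi(z_2)$.
\item At $x_0-s_\epsilon$ with $s_\epsilon$ of order $\sqrt\epsilon$, the window has lost $z_2$. The quadratic decay of $\cos$ near $z_2$ now outweighs the gain $\epsilon\psi$, so $(I_ng)(x_0-s_\epsilon)\le 1+\epsilon\psi(z_1)+O(\epsilon^2)$.
\end{itemize}
Hence $\|I_nf-I_ng\|_{\alpha',\kappa'}\gtrsim\epsilon^{1-\alpha'/2}$, while $\|f-g\|_{\alpha',\kappa'}=O(\epsilon)$. No Lipschitz constant exists, not even locally around $f$. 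Only the case $\alpha'=0$ of (iii) survives, and there your argument is correct. For $\alpha'>0$, what (ii) plus interpolation (as in the proof of Theorem~\ref{thm:uat_nisio}) does give is a H\"older-type modulus $\|I_nf-I_ng\|_{\alpha',\kappa'}\le C_r\|f-g\|_{\kappa'}^{1-\alpha'/\alpha}$ on $B_{\C^\alpha_{\kappa'}}(r)$.

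The paper's own proof takes exactly the same step (``using the linearity of $I_{n,\lambda}$ and taking the supremum over $\lambda\in\Lambda$''). So this is a gap you share with the source, not an idea you missed. It is not harmless: the Lipschitz bound~\eqref{eq:cor:uat_proof2b} in the proof of Theorem~\ref{thm:approx_chernoff} rests on it.

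The rest of your proposal is sound and parallels the paper:
\begin{itemize}
\item Assumptions~\ref{ass:chernoff} and~\ref{ass:nisio} are delegated to \cite{blessing23}.
\item Assumptions~\ref{ass:en} and~\ref{ass:en_rate} are verified with the same choices: $\Lambda_r=\Lambda$, the Euclidean metric, $\beta_r=1$ and $L_r=(r+c)\sup_nh_n$.
\item For (ii), the paper uses Lemma~\ref{lem:diff_bds} (It\^o's formula and Gronwall for $V_q(x)=1+|x|^q$) together with $\bar\kappa'(x,y)^{-1}=\frac12(V_{q'}(x)+V_{q'}(y))$. Your second-order Taylor expansion, in which the Brownian linear term has mean zero and the remainder is $O(h_n)(1+|x|^{q'})$, is a discrete version of the same Lyapunov bound.
\end{itemize}

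One minor point: replacing $H^*$ by $H^*-\min H^*$ is not a valid fallback. The operator in~\eqref{eq:pde:chernoff} satisfies $I_n0=H(0)h_n$, and that replacement changes the operator rather than repairing this. You need $H(0)=0$, which the paper also leaves implicit.
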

\begin{proof}
	See Appendix~\ref{app:pde}.
\end{proof}

For the numerical experiment, we choose $d=1$, a bounded time horizon $T=1$, 
$n=30$, $h_n=\frac{1}{30}$ and $H(p)=|p|$ which implies $\Lambda=[-1,1]$
and $H^*=+\infty\one_{[-1,1]^c}$.\ Furthermore, we independently sample the 
parameters 
\[ a_i\sim\cU(0.5,1.5),\quad \sigma_i\sim\cU(0.5,1.5),
\quad b_i\sim\cU(-2,2) \quad\text{and}\quad c_i\sim\cU(-1,1) \]
for $i=1,\ldots,I$ with $I:=2000$ according to uniform distributions.\ Using these 
parameters, we define the functions
\begin{equation} \label{eq:pde:fcts}
	f_{i,1}(x):=c_i+a_i\left(\Gamma\left(\frac{x-b_i}{\sigma_i}\right)-\frac{1}{2}\right)
	\quad\text{and}\quad
	f_{i,2}(x):=c_i-a_i\left(\Gamma\left(\frac{x-b_i}{\sigma_i}\right)-\frac{1}{2}\right)
\end{equation}
which are split up into $90\%/10\%$ for training and testing, where $\Gamma$ denotes
the cumulative distribution function of the standard normal distribution $\mathcal{N}(0,1)$.
Since
\[ \E\left[\Gamma\left(\frac{x+h_n\lambda+W_{h_n}-b_i}{\sigma_i}\right)\right]
=\Gamma\left(\frac{x+h_n\lambda-b_i}{\sqrt{\sigma_i^2+h_n}}\right) \]
and $f_{i,1}$ and $f_{i,2}$ are increasing and decreasing, respectively, the supremum over
$\lambda\in[-1,1]$ is attained at $\lambda_{i,1}^\star=1$ and $\lambda_{i,2}^\star=-1$ 
such that
\begin{equation} \label{eq:pde:cn_out}
	\begin{aligned}
		(I_n f_{i,1})(x) &=c_i+a_i\left(\Gamma\left(\frac{x+h_n-b_i}{\sqrt{\sigma_i^2+h_n}}\right)-\frac{1}{2}\right), \\
		(I_n f_{i,2})(x) &=c_i-a_i\left(\Gamma\left(\frac{x-h_n-b_i}{\sqrt{\sigma_i^2+h_n}}\right)-\frac{1}{2}\right).
	\end{aligned}
\end{equation}
In particular, the class of functions defined by equation~\eqref{eq:pde:fcts} is 
preserved under iteration of $I_n$.\ Then, we learn the neural operators~$\Phi_n$ 
by minimizing the mean squared error (MSE)
\begin{equation} \label{eq:mse}
	\frac{1}{2|\mathcal{I}|L}\sum_{i\in\mathcal{I}}\sum_{j=1}^2\sum_{l=1}^L 
	|(I_n f_{i,j})(x_l)-(\Phi_n f_{i,j})(x_l)|^2
\end{equation}
over the training set $\mathcal{I}$, where $(x_l)_{l=1,\ldots,L}\sim\mathcal{N}(0,\sigma^2)$ 
are $L:=200$ independent and identically distributed (i.i.d.)~evaluation points with 
$\sigma:=3$.\ In order to assess the out-of-sample performance of the trained operators,
we evaluate them on the hyperbolic tangent function
\[ f^{\mathrm{th}}(x):=\frac{1}{2}\tanh(x) \]
which does not belong to the class of training functions defined by equation~\eqref{eq:pde:fcts}.\
As reference solutions, we use an explicit representation for $S(t)f_{i,j}$ which 
is similar to equation~\eqref{eq:pde:cn_out}.\ Moreover, since $f^{\mathrm{th}}$ is
increasing, it holds $(S(t)f^{\mathrm{th}})(x)=\frac{1}{2}\E[\tanh(x+t+W_t)]$
which we approximate with Gauss--Hermite quadrature.\ For the Chernoff-neural operator 
$\Phi_n\in\mathcal{CN}^{\H,\rho,\L}_{\C_{\kappa_0},\C^\alpha_\kappa}$,
we use $M=100$ neurons and $\tanh$ as activation function, where~$\H$ and~$\L$
consist of neural networks with $20$ neurons and $\tanh$ as activation function,
and apply the Adam algorithm over $10^4$ epochs with learning rate $10^{-5}$ and 
batchsize $200$ per type $j=1,2$.\ For the envelope-neural operator 
$\Phi_n\in\mathcal{EN}^{\ReLU}_{\Ck,\Ck}$, we choose $M=64$, 
a ReLU-neural network $\varphi\colon\R^M\to\R$ with 2 hidden layers of size $64$ 
and $32$ and apply the Adam algorithm over $10^4$ epochs with learning rate $10^{-5}$ 
and batchsize $500$.\ The results are reported in Figure~\ref{fig:pde}.

\begin{figure}[ht!]
	\centering
	\begin{minipage}{0.49\textwidth}
		\centering
		\includegraphics[height=5.5cm]{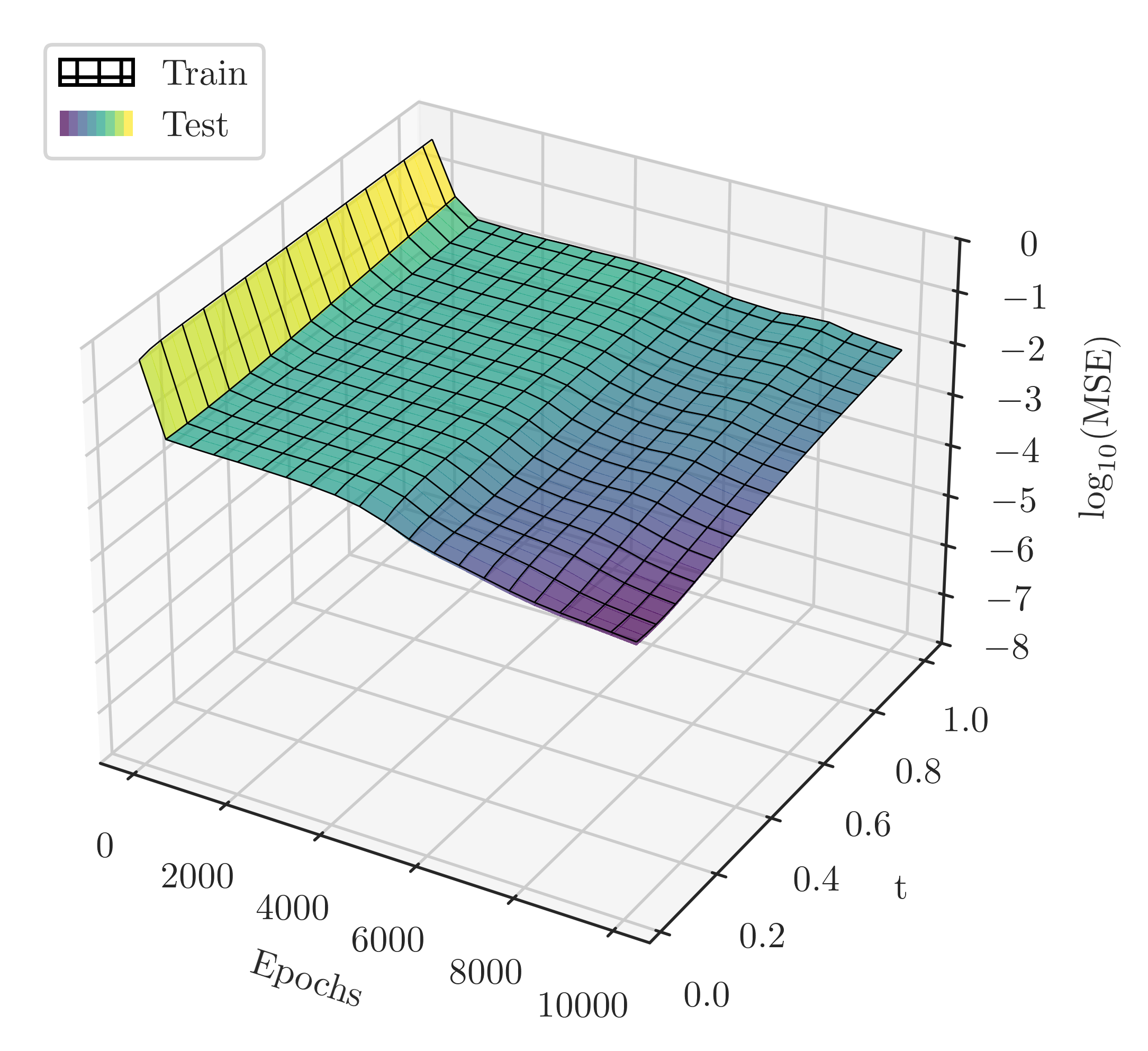}
		
		{\footnotesize ($\mathcal{C}$1) MSE~\eqref{eq:mse} between $\Phi_n^{k^t_n} f_{i,j}$ and $S(t) f_{i,j}$ along training epochs and time.}
	\end{minipage}
	\begin{minipage}{0.49\textwidth}
		\centering
		\includegraphics[height=5.5cm]{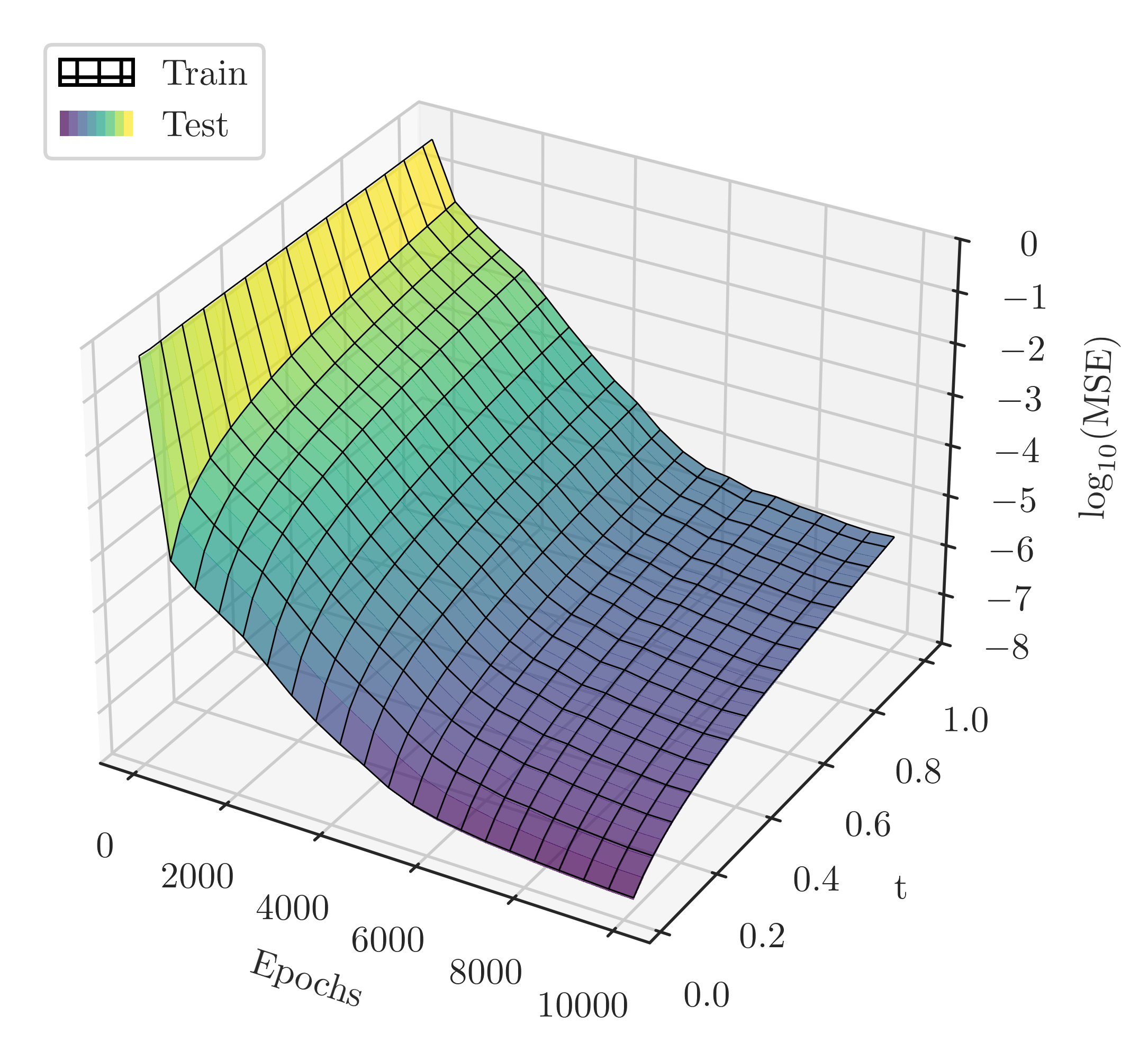}
		
		{\footnotesize ($\mathcal{E}$1) MSE~\eqref{eq:mse} between $\Phi_n^{k^t_n} f_{i,j}$ and $S(t) f_{i,j}$ along training epochs and time.}
	\end{minipage}
	\vspace{0.4cm}
	
	\begin{minipage}{0.49\textwidth}
		\centering
		\includegraphics[height=5.5cm]{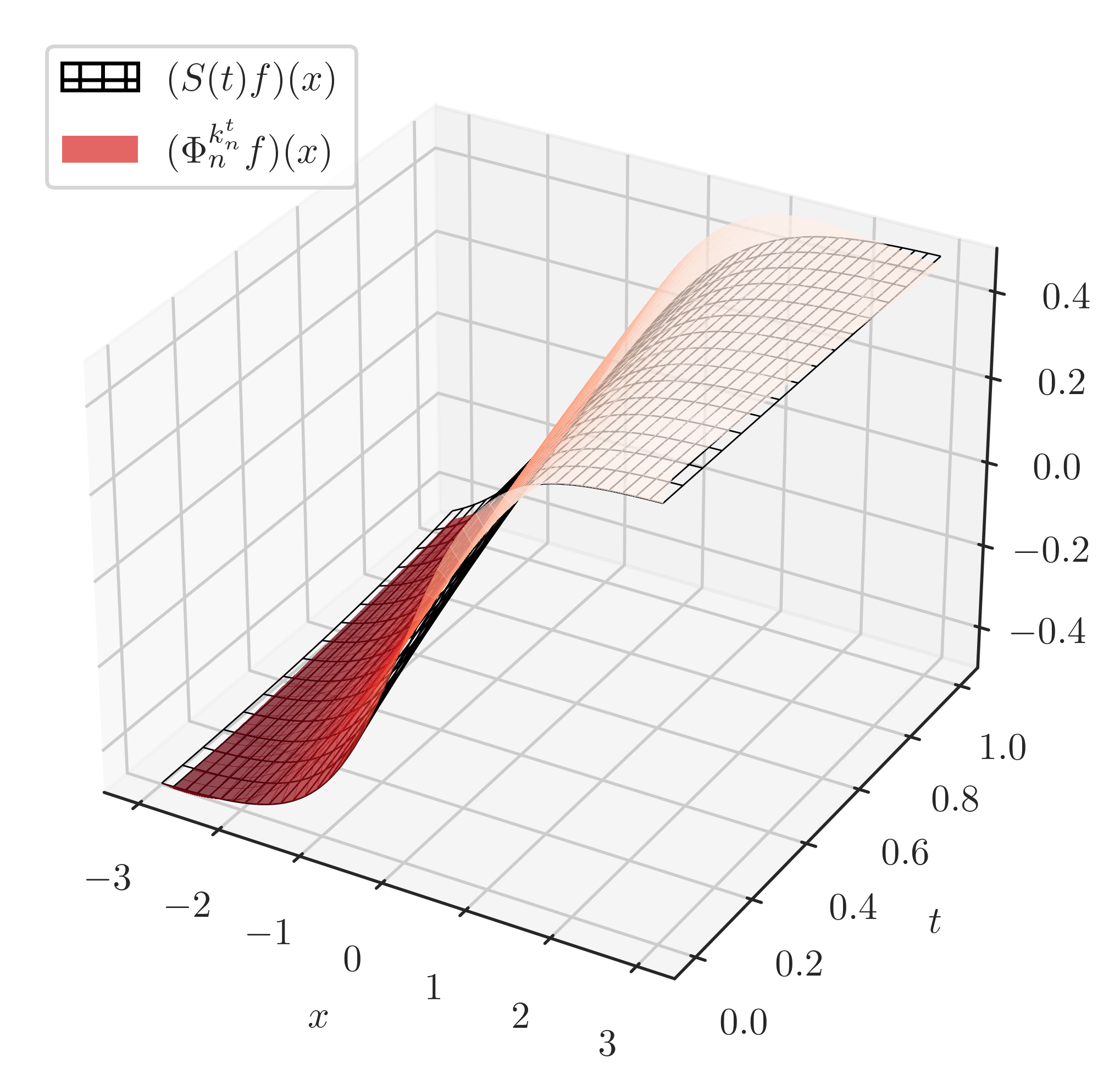}
		
		{\footnotesize ($\mathcal{C}$2) Approximation of $S(t) f^{\mathrm{th}}$ by $\Phi_n^{k^t_n} f^{\mathrm{th}}$.}
	\end{minipage}
	\begin{minipage}{0.49\textwidth}
		\centering
		\includegraphics[height=5.5cm]{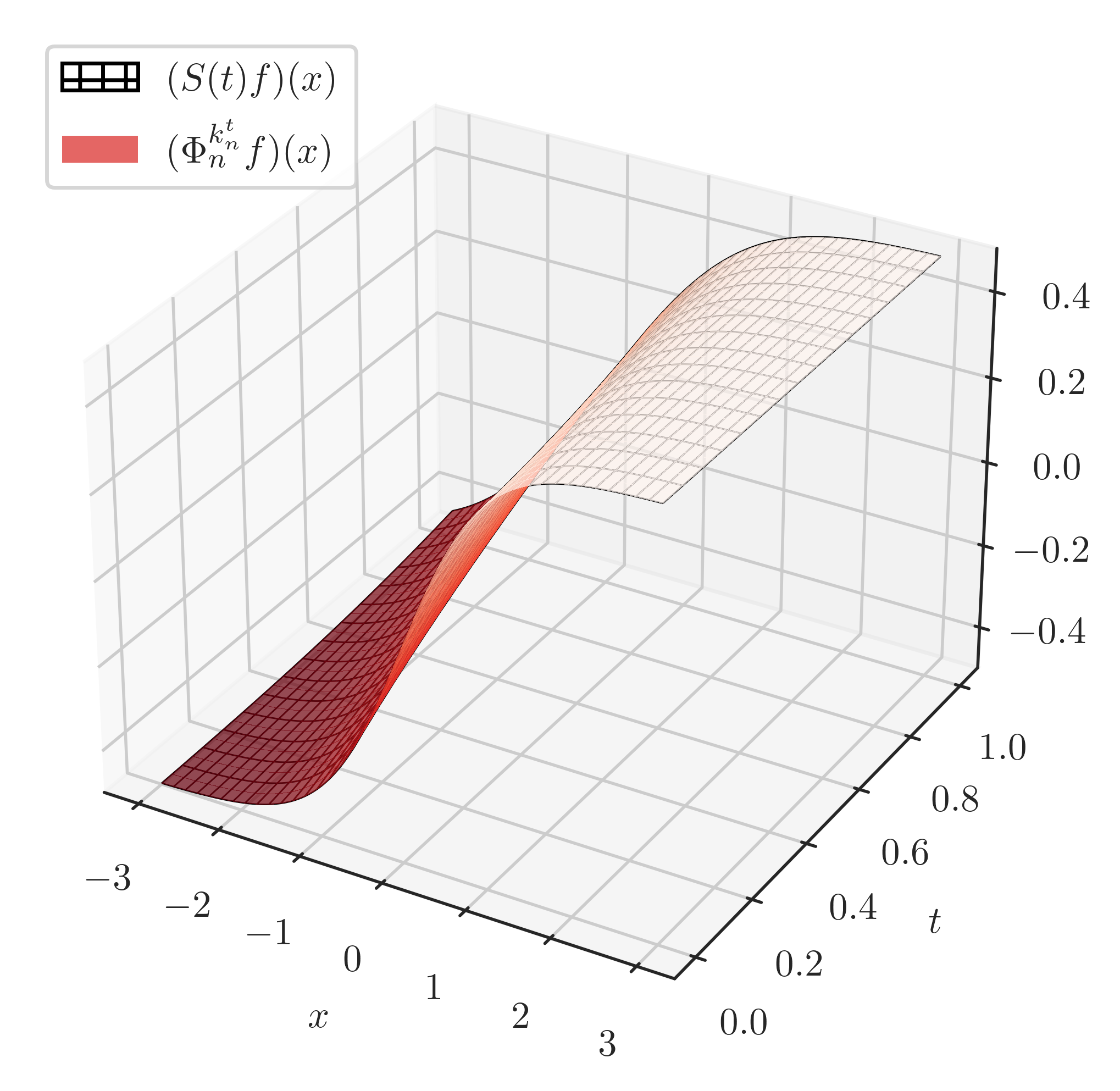}
		
		{\footnotesize ($\mathcal{E}$2) Approximation of $S(t) f^{\mathrm{th}}$ by $\Phi_n^{k^t_n} f^{\mathrm{th}}$.}
	\end{minipage}
	\vspace{0.4cm}
	
	\caption{Approximation of the solution operator $f\mapsto S(t)f:=u(t,\cdot)$ 
		of the PDE~\eqref{eq:pde} by using the Chernoff one-step operator $I_n$ defined in
		equation~\eqref{eq:pde:chernoff} which is learned with a Chernoff-neural operator 
		$\Phi_n\in\mathcal{CN}^{\H,\rho,\L}_{\C_{\kappa_0},\C^\alpha_\kappa}$ in
		($\mathcal{C}$1)--($\mathcal{C}$2) and an envelope-neural operator 
		$\Phi_n\in\mathcal{EN}^{\ReLU}_{\Ck,\Ck}$ in ($\mathcal{E}$1)--($\mathcal{E}$2). 
		In ($\mathcal{C}$1)+($\mathcal{E}$1), the MSE~\eqref{eq:mse} on the train/test set
		is evaluated.\ In ($\mathcal{C}$2)+($\mathcal{E}$2), the approximation of 
		$S(t)f^{\mathrm{th}}$ by $\Phi_n^{k^t_n}f^{\mathrm{th}}$ is shown.}
	\label{fig:pde}
\end{figure}

\subsection{Stochastic optimal control}
\label{sec:soc}

In the second example, we approximate the value function
\begin{equation} \label{eq:soc}
	(S(t)f)(x):=\sup_{(\mu_s)_s\subset\Xi,\atop (\sigma_s)_s\subset\Sigma} 
	\E\left[f\left(x+\int_0^t\mu_s\,\d s+\int_0^t\sigma_s\,\d W_s\right)\right]
\end{equation}
of a stochastic optimal control problem, where $(\mu_s)_{s\in [0,T]}$ and 
$(\sigma_s)_{s\in [0,T]}$ are predictable processes taking values in bounded subsets 
$\Xi\subset\Rd$ and $\Sigma\subset\mathbb{S}^d_+$, respectively.\ By~\cite[Theorem~6.2]{denk25},
the family $(S(t))_{t\geq 0}$ is a strongly continuous convex monotone semigroup
on~$\Ck$ with $\kappa:=(1+|\cdot|^q)^{-1}$ for any $q\geq 2$ whose generator is given by
\[ Af=\sup_{(\mu,\sigma)\in\Xi\times\Sigma}\left(\frac{1}{2}\trace\big(\sigma^2\nabla^2 f\big)+\mu^\top\nabla f\right) 
\quad\text{for all } f\in\Cbi. \]
Furthermore, it holds $S(t)f=\lim_{n\to\infty}I(\pi_n^t)f$ for all $t\geq 0$
and $f\in\Ck$, where 
\begin{equation} \label{eq:soc:chernoff}
	(I_n f)(x):=\sup_{(\mu,\sigma)\in\Xi\times\Sigma}\,\E[f(x+\mu h_n+\sigma W_{h_n})]
\end{equation}
and $(h_n)_{n\in\N}\subset (0,\infty)$ is a fixed sequence with $h_n\to 0$.\
We learn this Chernoff one-step operator by a Chernoff-neural operator 
$\Phi_n\in\mathcal{CN}^{\H,\rho,\L}_{\C_{\kappa_0},\C^\alpha_\kappa}$
and an envelope-neural operator $\Phi_n\in\mathcal{EN}^{\ReLU}_{\Ck,\Ck}$.

\begin{lemma} \label{lem:soc}
	Let $\alpha\in (0,1]$, $2\leq q<q_0<\infty$, $\kappa(x):=(1+|x|^q)^{-1}$,
	$\kappa_0(x):=(1+|x|^{q_0})^{-1}$ and $\K:=\{(1+|\cdot|^{q'})^{-1}\colon q'\in [q,q_0]\}$.\
	Then, the family $(I_n)_{n\in\N}$ defined by equation~\eqref{eq:soc:chernoff} satisfies
	the Assumptions~\ref{ass:chernoff},~\ref{ass:cn_uat},~\ref{ass:nisio},~\ref{ass:en} 
	and~\ref{ass:en_rate}.
\end{lemma}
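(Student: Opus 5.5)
The plan is to write $I_{n,\lambda}f(x):=\E[f(x+\mu h_n+\sigma W_{h_n})]$ for $\lambda=(\mu,\sigma)\in\Lambda:=\Xi\times\Sigma$, with $\eta_n\equiv\eta\equiv 0$. Then $I_n$ has the envelope form~\eqref{eq:nisio}, and we verify Assumption~\ref{ass:nisio} first. Theorem~\ref{thm:nisio} then yields Assumption~\ref{ass:chernoff} as a by-product, except for (iii), (iv), (vi), which we check directly for $I_{n,\lambda}$ uniformly in $\lambda$ and which pass to the supremum. Condition (i) is trivial, since any $\lambda$ has $\eta_n(\lambda)=0$. Condition (ii) is linearity and monotonicity of expectations. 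For (iii), we use the polynomial weight estimate $\kappa(x)/\kappa(x+y)\le c(1+|y|^q)$. Taking expectations with $|\mu|,|\sigma|$ bounded gives $\sup_\lambda\E[1+|\mu h_n+\sigma W_{h_n}|^q]\le 1+Ch_n\le e^{\omega h_n}$, using that $q\ge 2$ controls $\E|W_h|^q\lesssim h^{q/2}\le h$ for $h\le1$ and absorbing the large-$h_n$ case into $\omega$. Condition (iv) holds with equality, since $I_{n,\lambda}$ commutes with translations. Condition (vi) is immediate because translation averaging preserves Lipschitz constants and sup norms.

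For (v), we use the equivalent tightness formulation after Assumption~\ref{ass:chernoff}. Each $I(\pi_n^t)f$ is bounded by a supremum over controlled processes whose increments have uniformly bounded $q$-th moments on $[0,T]$. A Dini/Markov truncation argument, as in~\cite[Theorem~6.2]{denk25}, then gives $\sup_{t,n}I(\pi_n^t)f_k\downarrow0$ on compacts. For (vii), we argue by Taylor expansion: for $f\in\Cbi$,
\[ \Big|\tfrac{I_{n,\lambda}f-f}{h_n}-\tfrac12\trace(\sigma^2\nabla^2f)-\mu^\top\nabla f\Big|\le C h_n^{1/2}, \]
uniformly in $\lambda$. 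The supremum of uniformly convergent functions converges, which gives (vii').

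For Assumption~\ref{ass:cn_uat}, $I_n$ extends to $\C_{\kappa_0}$ by the same moment estimate with $q_0$. Property (i) of $\K$ is clear, because $(1+|x|^{q_1})^{-1}\llsim(1+|x|^{q_2})^{-1}$ iff $q_1>q_2$, so intermediate exponents exist. For (ii), the sup-part is as above. For the Hölder part, we compare
\[ |I_{n,\lambda}f(x)-I_{n,\lambda}f(y)|\le\E\big[|x-y|^{\alpha'}\bar\kappa(x+Z,y+Z)^{-1}\big]\|f\|_{\alpha',\kappa'} \]
with $Z:=\mu h_n+\sigma W_{h_n}$, and bound $\bar\kappa(x,y)/\bar\kappa(x+Z,y+Z)$ through the harmonic-mean form and the same polynomial estimate, giving again $\le e^{\omega h_n}$. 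Taking the supremum over $\lambda$ preserves this, since $|\sup_\lambda a_\lambda-\sup_\lambda b_\lambda|\le\sup_\lambda|a_\lambda-b_\lambda|$. For (iii), we need a Lipschitz bound of $I_n$ in $\|\cdot\|_{\alpha',\kappa'}$, which is not automatic for a nonlinear supremum. The plan is to use the pointwise bound $|I_nf-I_ng|\le\sup_\lambda I_{n,\lambda}|f-g|$ for the sup part. For the seminorm we have no control of differences of maximizers, and this is the main obstacle. What I would try first is to exploit the smoothing of the Gaussian: $\Sigma$ may contain degenerate $\sigma$, so it only helps partially, and the fallback is an estimate of the form $c\|f-g\|_{\alpha',\kappa'}$ with $c$ depending on $n$ via the interpolation trick of Theorem~\ref{thm:uat_nisio}. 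The cleanest route seems to be writing $I_nf(x)-I_ng(x)-(I_nf(y)-I_ng(y))$ and choosing near-maximizers at $x$ and $y$ separately, bounding each of the four terms with the difference $f-g$ and its Hölder seminorm.

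Finally, Assumption~\ref{ass:en}(i) is the Hölder estimate above with $\alpha'=\alpha$, $\kappa'=\kappa$. For Assumptions~\ref{ass:en}(ii) and~\ref{ass:en_rate}, we take $\Lambda_r:=\overline{\Xi}\times\overline{\Sigma}$ with the Euclidean metric, which is compact. We may assume $\Xi,\Sigma$ are closed, since replacing them by their closures does not change the supremum by continuity in $\lambda$. Using the coupling $f(x+\mu_1h+\sigma_1W)-f(x+\mu_2h+\sigma_2W)$ and the $\alpha$-Hölder bound weighted by $\kappa$, we get
\[ \|I_{n,\lambda_1}f-I_{n,\lambda_2}f\|_\kappa\le C r\big(|\mu_1-\mu_2|h_n+|\sigma_1-\sigma_2|\,\E|W_{h_n}|\big)^{\alpha}\le L_r|\lambda_1-\lambda_2|^\alpha. \]
Here the weight ratio is absorbed by moments of $W$. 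This yields $\beta_r=\alpha$ and the uniform equicontinuity in Assumption~\ref{ass:en}(ii).
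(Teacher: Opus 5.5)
Two steps of your plan do not go through as written.

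\textbf{First gap: the weight estimate loses a constant.} You use $\kappa(x)/\kappa(x+y)\le c(1+|y|^q)$ for Assumption~\ref{ass:nisio}(iii) (hence Assumption~\ref{ass:chernoff}(ii)), Assumption~\ref{ass:cn_uat}(ii) and Assumption~\ref{ass:en}(i). This only gives $\|I_{n,\lambda}f\|_\kappa\le c(1+Ch_n)\|f\|_\kappa$. Any constant in such a pointwise bound must exceed $1$: for $q=2$, $d=1$ and $x=y=t$ small, $1+4t^2>(1+t^2)^2$. So the bound is not $\le e^{\omega h_n}$ as $h_n\to0$, and under iteration the factor compounds to $c^{k_n^t}$.

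What is missing is the cancellation of the first-order term, $\E[\sigma W_{h_n}]=0$. You need the Lyapunov bound $\E[1+|x+Z|^{q'}]\le e^{\omega h_n}(1+|x|^{q'})$ with $Z:=\mu h_n+\sigma W_{h_n}$. The paper proves this in Lemma~\ref{lem:diff_bds} via It\^o's formula for $V_{q'}(x)=1+|x|^{q'}$ and Gronwall; a second-order Taylor expansion of $z\mapsto|x+z|^{q'}$ would also work. Apply it to $\kappa'(x)^{-1}$ and to $\bar\kappa'(x,y)^{-1}=\frac12(\kappa'(x)^{-1}+\kappa'(y)^{-1})$. This gives the $e^{\omega h_n}$-bound for both parts of $\|\cdot\|_{\alpha',\kappa'}$, so your harmonic-mean idea is right once the weight estimate is replaced.

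The rest of your direct verification of Assumption~\ref{ass:nisio} is fine; the paper simply takes it from~\cite[Theorem~6.2]{denk25}. Your coupling estimate for Assumption~\ref{ass:en_rate} with $\beta_r=\alpha$ is also fine.

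\textbf{Second gap: Assumption~\ref{ass:cn_uat}(iii).} You leave it open, and your suspicion is justified: for $\alpha'\in(0,\alpha)$ no Lipschitz constant exists, so none of the routes you list can close it. The paper treats (iii) together with (ii) by ``linearity of $I_{n,(\mu,\sigma)}$ and taking the supremum''. That only covers the $\|\cdot\|_{\kappa'}$-part, via $|I_nf-I_ng|\le\sup_\lambda I_{n,\lambda}|f-g|$.

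For the seminorm, here is a counterexample. Take $d=1$, $\Xi=[-\bar\mu,\bar\mu]$ with $\bar\mu>0$, $\bar\sigma:=\sup\Sigma$, $f(z):=z^2$ and $g(z):=z^2+\epsilon z$; both lie in $\C^\alpha_\kappa$ because $q\ge 2$. Then
\[ (I_nf)(x)=\big(|x|+\bar\mu h_n\big)^2+\bar\sigma^2h_n, \qquad (I_ng)(x)=\big(|x+\tfrac{\epsilon}{2}|+\bar\mu h_n\big)^2-\tfrac{\epsilon^2}{4}+\bar\sigma^2h_n, \]
so $D:=I_ng-I_nf$ satisfies $D(0)-D(-\frac{\epsilon}{2})=2\bar\mu h_n\epsilon+\frac{\epsilon^2}{2}$. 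For $\epsilon\le 2$, the H\"older part of $\|D\|_{\alpha',\kappa'}$ is therefore at least $2^{\alpha'}\bar\mu h_n\epsilon^{1-\alpha'}$. On the other hand, $\|g-f\|_{\alpha',\kappa'}=\epsilon\|\psi\|_{\alpha',\kappa'}$ with $\psi(z):=z$, so the ratio blows up as $\epsilon\downarrow0$.

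So the lemma cannot be proved as stated at this point. Your interpolation idea does yield the local bound $\|I_nf-I_ng\|_{\alpha',\kappa'}\le C_r\|f-g\|_{\kappa'}^{1-\alpha'/\alpha}$ for $f,g\in B_{\C^\alpha_\kappa}(r)$, i.e.\ continuity on bounded subsets of $\C^\alpha_\kappa$. Condition (iii) would have to be weakened to something of this type, and the Lipschitz step~\eqref{eq:cor:uat_proof2b} in the proof of Theorem~\ref{thm:approx_chernoff} re-examined.
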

\begin{proof}
	See Appendix~\ref{app:soc}.
\end{proof}

For the numerical experiment, we choose $d=1$, a finite time horizon $T=1$, $n=30$, 
$h_n=\frac{1}{30}$, $\Xi:=[\underline{\mu},\overline{\mu}]:=[-0.25,0.25]$, 
$\Sigma := [\underline{\sigma},\overline{\sigma}]:=[1.5, 2.0]$ and generate 
$I:=1000$ i.i.d.~strike prices $(K_i)_{i=1,\ldots,I}\sim\mathcal{N}(0,\sigma^2)$
with $\sigma=2$. We use these parameters to define the functions
\begin{equation} \label{eq:soc:fcts}
	\begin{aligned}
		f_{i,1}(x) &=(x-K_i)_+, & f_{i,2}(x) &=(K_i-x)_+, \\
		f_{i,3}(x) &=-(x-K_i)_+, & f_{i,4}(x) &=-(K_i-x)_+,
	\end{aligned}
\end{equation}
where $s_+:=\max(s,0)$, which are split up into $90\%/10\%$ for training and testing
equally among the different types. Note that the operators $I_{n,(\mu,\sigma)}$ applied 
to call/put functions are given by
\begin{align} 
	\big(I_{n,(\mu,\sigma)}(\cdot-K)_+\big)(x) &=\E[(x+\mu h_n+\sigma W_{h_n}-K)_+] \nonumber \\
	&=(x+\mu h_n\!-\!K)\,\Gamma\left(\frac{x+\mu h_n-K}{\sigma\sqrt{h_n}}\right) 
	+\sigma\sqrt{h_n}\,\gamma\left(\frac{x+\mu h_n-K}{\sigma\sqrt{h_n}}\right), \label{eq:soc:1} \\
	\big(I_{n,(\mu,\sigma)} (K-\cdot)_+\big)(x) & = \mathbb{E}[(K-x-\mu h_n-\sigma W_{h_n})_+] \nonumber \\
	&=(K-x-\mu h_n)\,\Gamma\left(\frac{K-x-\mu h_n}{\sigma\sqrt{h_n}}\right)
	+\sigma\sqrt{h_n}\,\gamma\left(\frac{K-x-\mu h_n}{\sigma\sqrt{h_n}}\right), \label{eq:soc:2}
\end{align}
where $\Gamma$ (resp., $\gamma$) denotes the cumulative distribution (resp., probability density)
function of the standard normal distribution $\mathcal{N}(0,1)$.\ Consequently, the functions 
$I_n f_{i,j}$ are explicitly given by the equations~\eqref{eq:soc:1} and~\eqref{eq:soc:2}, 
where the supremum over $(\mu,\sigma)$ is attained at the following values:
$(\mu,\sigma)=(\overline{\mu},\overline{\sigma})$ if $j=1$, $(\mu,\sigma)=(\underline{\mu},\overline{\sigma})$
if $j=2$, $(\mu,\sigma):=(\underline{\mu},\underline{\sigma})$ if $j=3$ and 
$(\mu,\sigma)=(\overline{\mu},\underline{\sigma})$ if $j=4$.\ In order to assess the 
out-of-sample performance of the trained operators, we evaluate them on the absolute
value function
\[ f^{\mathrm{ab}}(x):=|x| \]
which does not belong to the training functions given by equation~\eqref{eq:soc:fcts}.\
As reference solution, we compute $S(t)f_{i,j}$ and $S(t)f^{\mathrm{ab}}$ using a finite
difference scheme for the corresponding fully nonlinear HJB equation.\ We learn the neural 
operators by minimizing the mean squared error (MSE) defined in equation~\eqref{eq:mse} 
over the training set, where $(x_l)_{l=1,\ldots,L} \sim \mathcal{N}(0,\sigma^2)$ are $L=200$
i.i.d.~evaluation points with $\sigma=3$.\ The other parameters are chosen as in 
Section~\ref{sec:pde}, except that the Chernoff-neural operator has ReLU activation 
functions and learning rate $2\cdot 10^{-5}$ and the envelope-neural operator has $M=8$
neurons and learning rate $10^{-5}$ with the ReLU-neural network $\varphi\colon\R^M\to\R$
having two hidden layers of $8$ and $4$ neurons. Both operators are trained with batchsize
$100$ per type $j=1,\ldots,4$. The results are reported in Figure~\ref{fig:soc}.

\begin{figure}[ht!]
	\centering
	\begin{minipage}{0.49\textwidth}
		\centering
		\includegraphics[height=5.5cm]{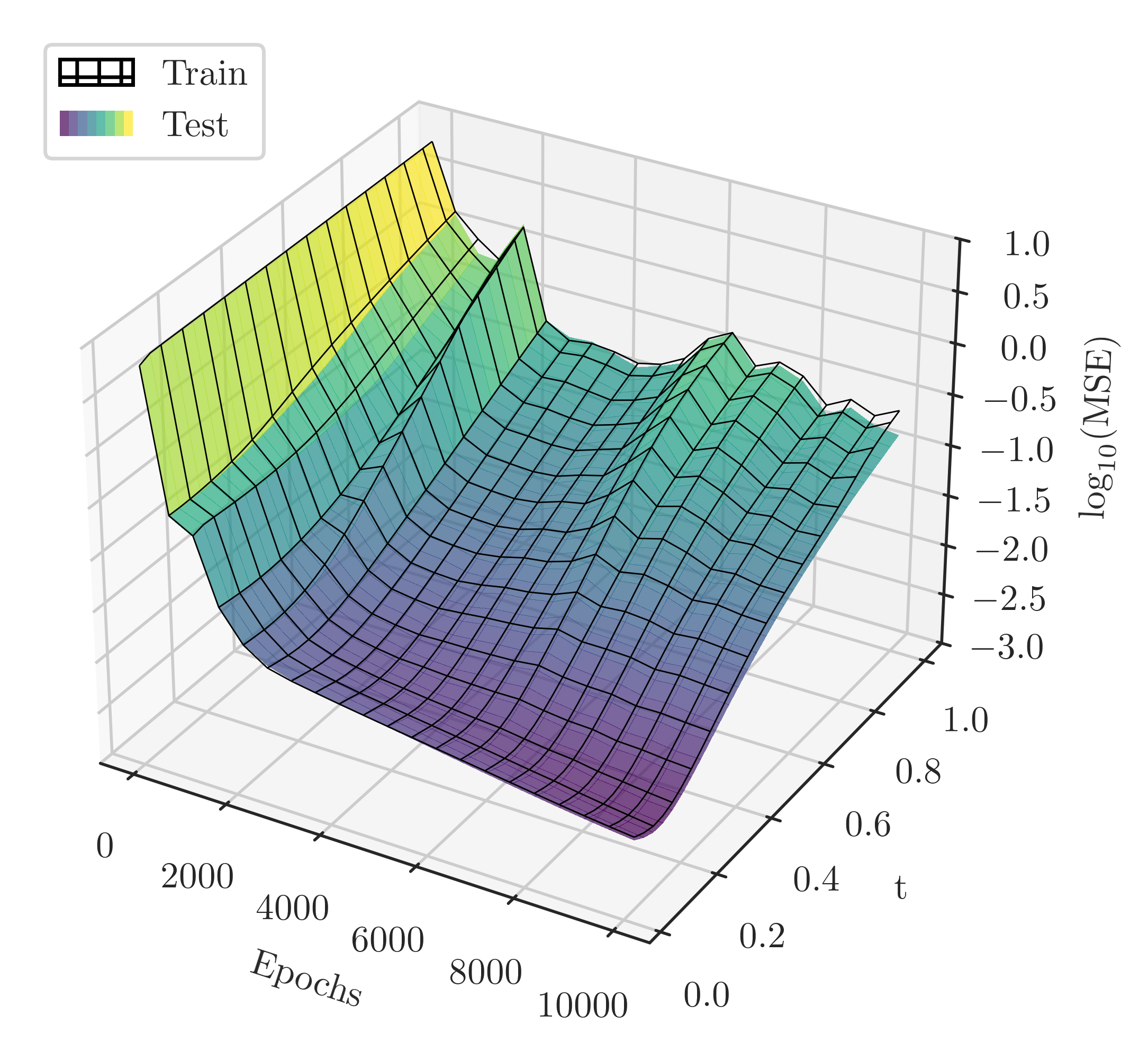}
		
		{\footnotesize ($\mathcal{C}$1) MSE~\eqref{eq:mse} between $\Phi_n^{k^t_n} f_{i,j}$ and $S(t) f_{i,j}$ along training epochs and time.}
	\end{minipage}
	\begin{minipage}{0.49\textwidth}
		\centering
		\includegraphics[height=5.5cm]{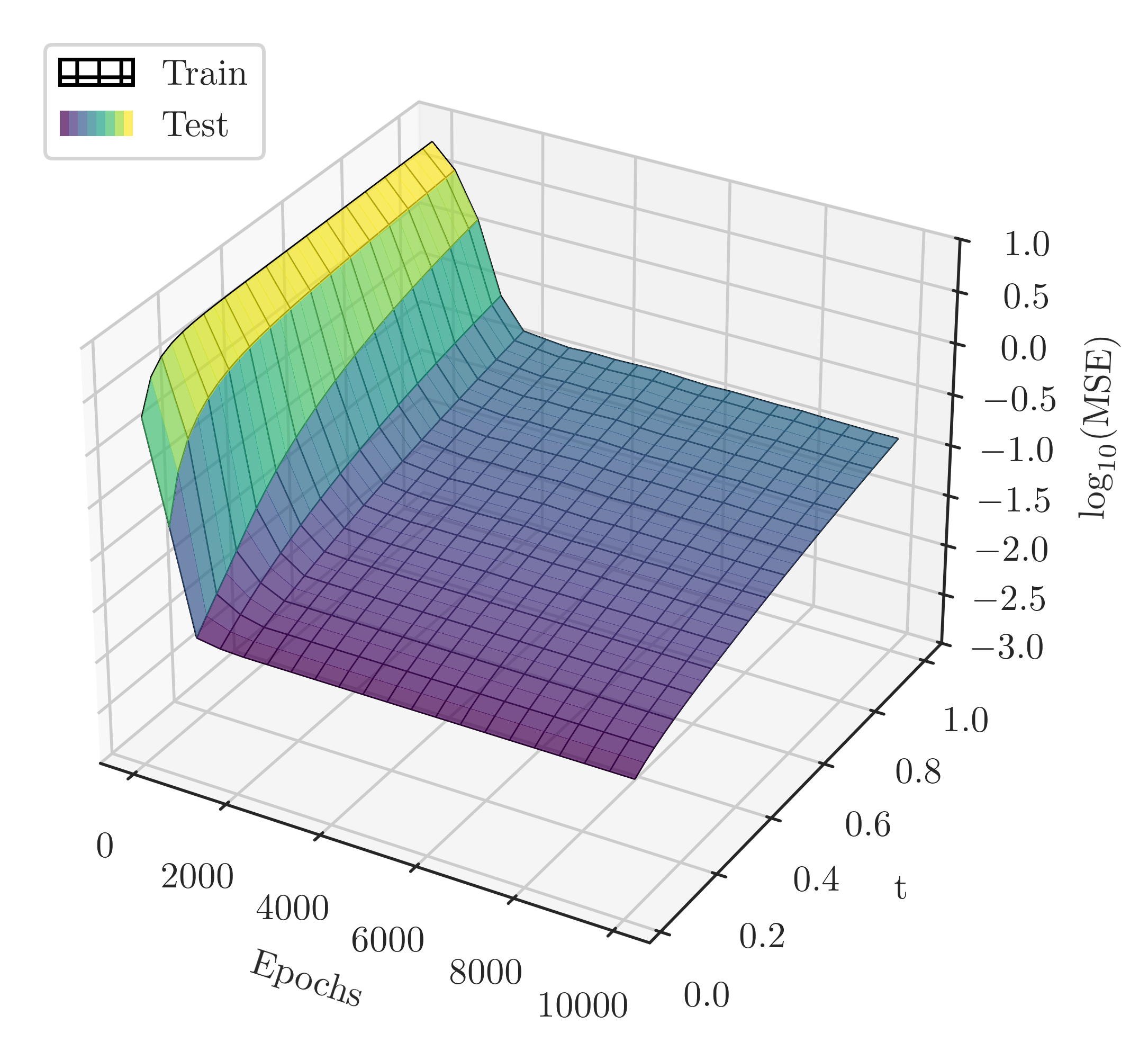}
		
		{\footnotesize ($\mathcal{E}$1) MSE~\eqref{eq:mse} between $\Phi_n^{k^t_n} f_{i,j}$ and $S(t) f_{i,j}$ along training epochs and time.}
	\end{minipage}
	\vspace{0.4cm}
	
	\begin{minipage}{0.49\textwidth}
		\centering
		\includegraphics[height=5.5cm]{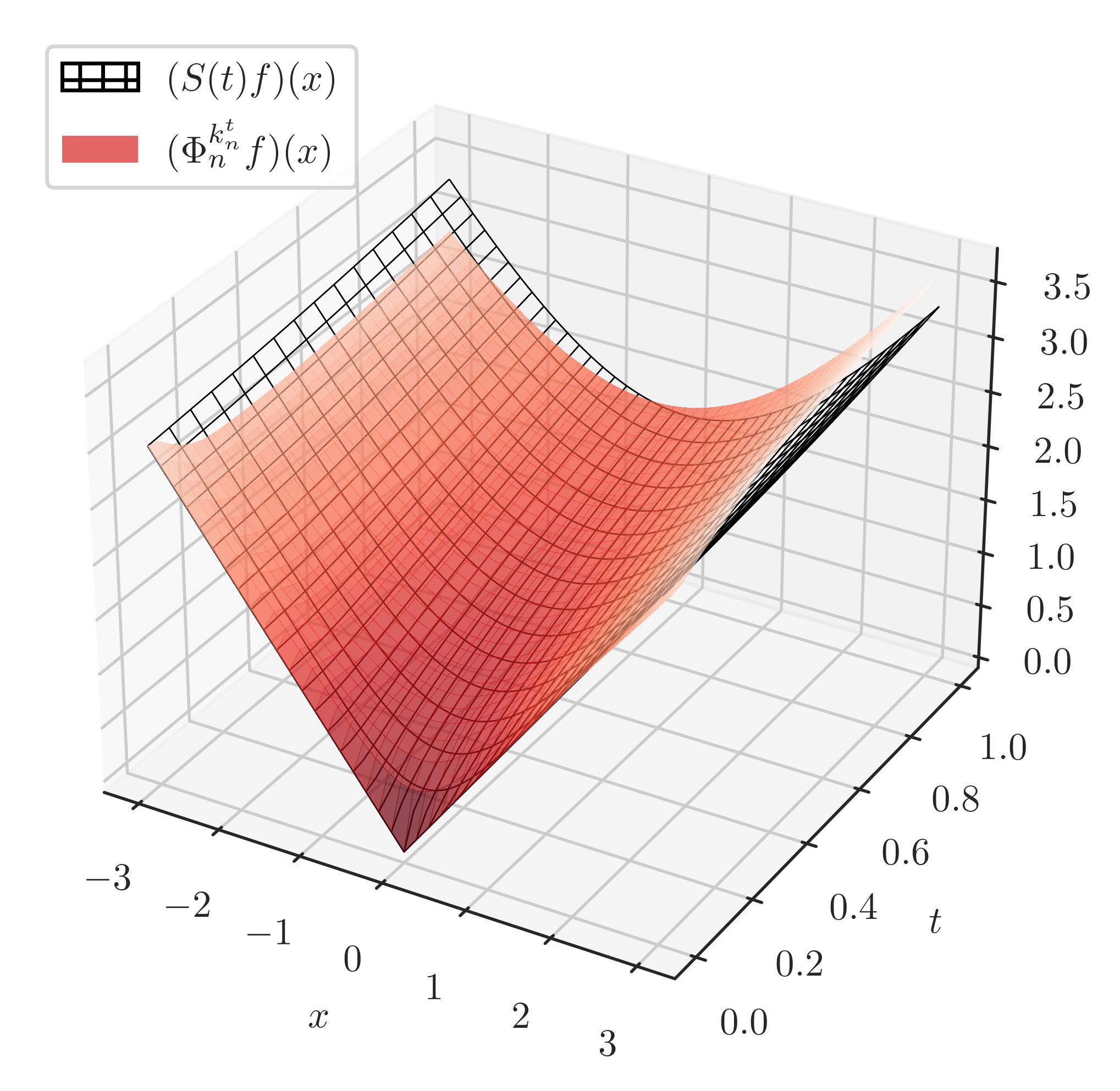}
		
		{\footnotesize ($\mathcal{C}$2) Approximation of $S(t) f^{\mathrm{ab}}$ by $\Phi_n^{k^t_n} f^{\mathrm{ab}}$.}
	\end{minipage}
	\begin{minipage}{0.49\textwidth}
		\centering
		\includegraphics[height=5.5cm]{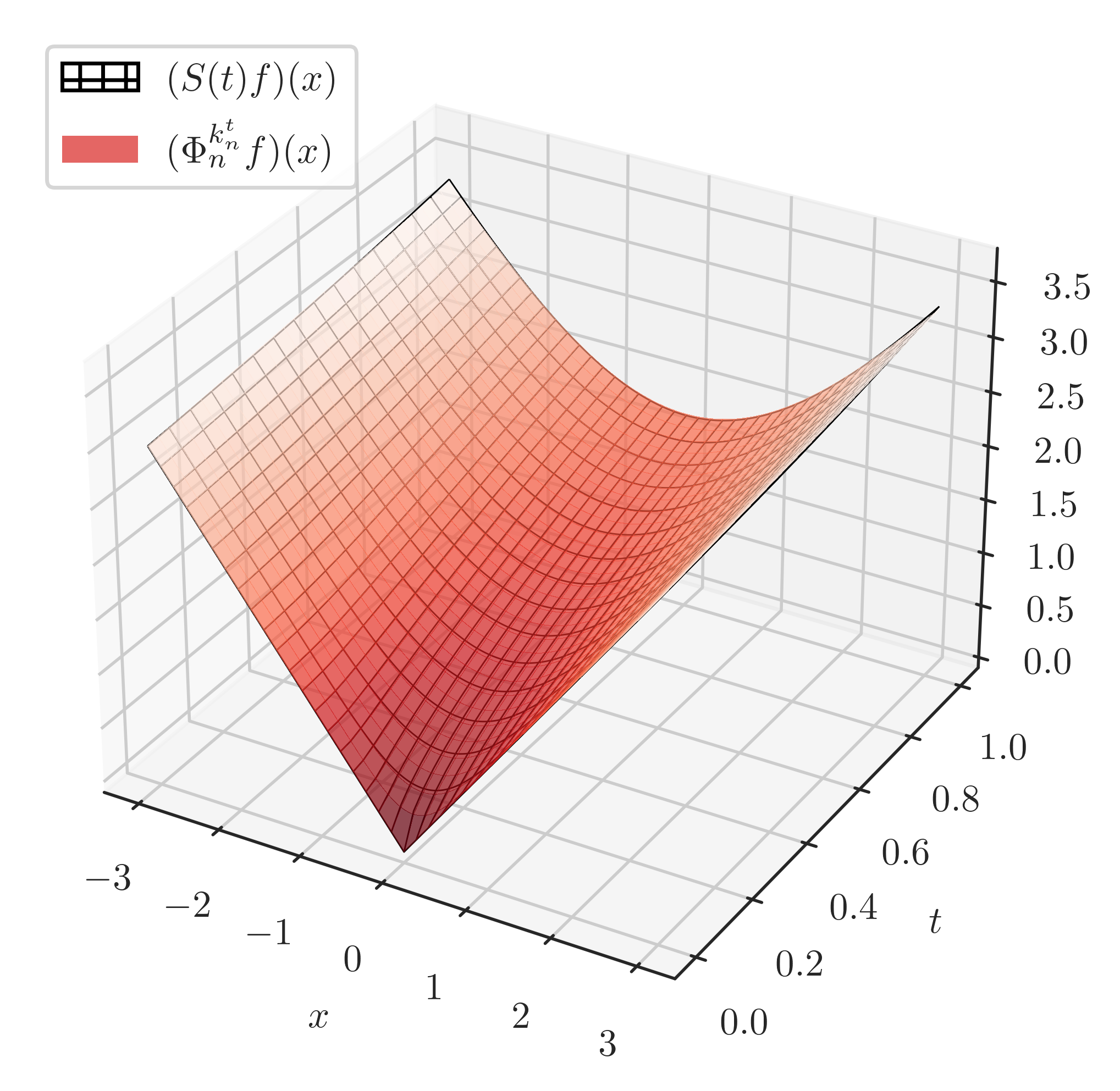}
		
		{\footnotesize ($\mathcal{E}$2) Approximation of $S(t) f^{\mathrm{ab}}$ by $\Phi_n^{k^t_n} f^{\mathrm{ab}}$.}
	\end{minipage}
	\vspace{0.4cm}
	
	\caption{Approximating the stochastic control problem $f \mapsto S(t) f$ in \eqref{eq:soc} by the Chernoff one-step operator $I_n f$ in \eqref{eq:soc:chernoff} which is learned with a Chernoff-neural operator $\Phi_n \in \mathcal{CN}^{\mathcal{H},\rho,\mathcal{L}}_{\C_{\kappa_0},\C^\alpha_\kappa}$ in ($\mathcal{C}$1)--($\mathcal{C}$2) and an envelope-neural operator $\Phi_n \in \mathcal{EN}^{\ReLU}_{\Ck,\Ck}$ in ($\mathcal{E}$1)--($\mathcal{E}$2). In ($\mathcal{C}$1)+($\mathcal{E}$1), the MSE~\eqref{eq:mse} on the train/test set is evaluated. In ($\mathcal{C}$2)+($\mathcal{E}$2), the approximation of $S(t) f^{\mathrm{ab}}$ by $\Phi_n^{k^t_n} f^{\mathrm{ab}}$ is shown.}
	\label{fig:soc}
\end{figure}

\subsection{Stochastic processes under model uncertainty}
\label{sec:wus}

In the third example, we follow~\cite{bartl2021,fuhrmann23} to study strongly 
continuous convex monotone semigroups arising from Wasserstein perturbations 
of the transition probabilities of Lévy processes.\ To that end, let $p>1$ and
$(X_t)_{t\geq 0}$ be a Lévy process with transition probabilities 
\[ p_t(x,B):=\varpi_t(\{y\in\Rd\colon x+y\in B\}) 
\quad\text{for all } t\geq 0, \, x\in\Rd \text{ and } B\in\B(\Rd), \]
where $(\varpi_t)_{t\geq 0}$ is a family of probability measures with finite
$p$-th moment such that
\begin{equation} \label{eq:wus:moments:bound}
	\lim_{t\to 0}\int_{\Rd}|y|^p\,\varpi_t(\d y)=0.
\end{equation}
In addition, let $\eta\colon\R_+\to [0,\infty]$ be a convex non-decreasing function
with $\eta(0)=0$ and $\eta\not\equiv 0$ which is locally Lipschitz continuous on 
$\{\eta<\infty\}$.\ Moreover, the function $\R_+\to [0,\infty],\; v \mapsto\eta(v^{1/p})$
is supposed to be convex which implies $\liminf_{v\to\infty}\frac{\eta(v)}{v^p}>0$.\ 
For every $n\in\N$, $f\in\Cb$ and $x\in\Rd$, we define 
\begin{equation} \label{eq:wus:chernoff}
	(I_n f)(x):=\sup_{\nu\in\cP_p(\Rd)}\left(\int_{\Rd}f(x + z)\,\nu(\d z)
	-\eta\left(\frac{\W_p(\varpi_{h_n},\nu)}{h_n}\right)h_n\right),
\end{equation}
where $\cP_p(\Rd)$ consists of all probability measures on $\B(\Rd)$ with finite
$p$-th moment,~$\W_p$ denotes the $p$-Wasserstein distance on $\cP_p(\Rd)$
and $(h_n)_{n\in\N}\subset (0,\infty)$ is a fixed sequence with $h_n\to 0$.
This Chernoff one-step operator incorporates nonparametric model uncertainty
by taking the fixed transition probabilities of the Lévy process $(X_t)_{t\geq 0}$
as reference model and weighting all other probability measures according to their 
distance to the reference model.\ For instance, in case that $\eta:=+\infty\one_{\{0\}^c}$,
the Chernoff one-step operators coincide with the linear transition semigroup
\[ (T(t)f)(x):=\int_{\Rd}f(x+y)\,\varpi_t(\d y) \]
of the Lévy process.\ Following~\cite{fuhrmann23}, it is possible to generalize 
the framework to reference dynamics of the form $X_t^x=\psi_t(x)+Y_t$ with a 
L\'evy process $(Y_t)_{t\geq 0}$ and a deterministic drift $(\psi_t)_{t\geq 0}$ 
which covers, for instance, Ornstein--Uhlenbeck processes.\
It follows from~\cite[Theorem~3.14]{fuhrmann23} that there exists a strongly continuous
convex monotone semigroup $(S(t))_{t\geq 0}$ on $\Cb$ given by 
\[ S(t)f:=\lim_{n\to\infty}I(\pi_n^t)f \quad\text{for all } t\geq 0 \text{ and } f\in\Cb. \]
Furthermore, the generator of $(S(t))_{t\geq 0}$ is given by 
\begin{equation} \label{eq:wus:generator}
	Af=Bf+\eta^*\big(|\nabla f|\big) \quad\text{for all } f\in D(B)\cap\C_b^1,
\end{equation}
where $\eta^*(w):=\sup_{v\geq 0}(vw-\eta(v))$ for all $w\geq 0$ and $B$ denotes 
the generator of $(T(t))_{t\geq 0}$.

\begin{lemma}\label{lem:wus}
	Let $\alpha:=1$, $\kappa\equiv 1$ and assume $\Cbi\subset D(B)$.\ Then, the operators 
	$(I_n)_{n\in\N}$ defined by equation~\eqref{eq:wus:chernoff} satisfy the 
	Assumptions~\ref{ass:nisio}(i)--(vii),~\ref{ass:en} and~\ref{ass:en_rate}.
\end{lemma}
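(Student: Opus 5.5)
We would cast the operator~\eqref{eq:wus:chernoff} in the envelope form~\eqref{eq:nisio} by choosing $\Lambda:=\cP_p(\Rd)$, $(I_{n,\nu}f)(x):=\int_{\Rd}f(x+z)\,\nu(\d z)$ and $\eta_n(\nu):=\eta\big(\W_p(\varpi_{h_n},\nu)/h_n\big)$, with $\lambda_{0,n}:=\varpi_{h_n}$. Then Assumption~\ref{ass:nisio}(i) holds because $\eta(0)=0$. The operators $I_{n,\nu}$ are linear, monotone and sup-norm contractions with $\kappa\equiv 1$, so~(ii),(iii) hold with $\omega=0$. They commute with translations, so~(iv) holds with zero error, and they preserve $\Lipb(r)$, so~(vi) holds. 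The same translation-invariance and contractivity give Assumption~\ref{ass:en}(i) with $\alpha=1$, since the Lipschitz constant and the sup norm of $I_{n,\nu}f$ are bounded by those of $f$. For~(v) and~(vii) we would not reprove anything: they are exactly the tightness/continuity-from-above property and the generator convergence established in~\cite[Theorem~3.14]{fuhrmann23} (using $\Cbi\subset D(B)$ and~\eqref{eq:wus:moments:bound}), which yield the semigroup with generator~\eqref{eq:wus:generator}. The only routine point to verify is the equivalent reformulation of~(v) via~\cite[Lemma~C.2]{denk25}.

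The main work is Assumptions~\ref{ass:en}(ii) and~\ref{ass:en_rate}, where $\Lambda=\cP_p(\Rd)$ is not totally bounded. The first step is to localize. For $f\in\Lipb(r)$ and $\nu$ with $\W_p(\varpi_{h_n},\nu)=v h_n$, we have $\int f\,\d\nu-\int f\,\d\varpi_{h_n}\le r\W_1\le r v h_n$. Hence $\nu$ can only beat $\lambda_{0,n}$ if $\eta(v)\le r v$. Since $\liminf_{v\to\infty}\eta(v)/v^p>0$ with $p>1$, this forces $v\le v_r$ for some $v_r<\infty$ independent of $n$, and restricting to $\eta(v)\le rv$ does not change the supremum. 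We would also use the bounded-oscillation property: only $f(x+z)$ on bounded sets matters up to $\sup$-norm $2r$.

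This leads to the parametrization by transport maps. We write $\nu=(\mathrm{id}+T)_\#\varpi_{h_n}$, or more generally as the law of $Y+Z$ with $Y\sim\varpi_{h_n}$ and coupling displacement $Z$ with $\E|Z|^p\le (v_rh_n)^p$. The perturbation is then $\int f(x+y+z)\,\pi(\d y,\d z)$, and the natural parameter is the displacement law. This is the hard step: the $n$-independent totally bounded metric space $(\Lambda_r,d_r)$ must contain all $\varpi_{h_n}$.

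Our first attempt would be the following. Take $\Lambda_r$ to be the set of all $\nu$ with $\W_p(\nu,\{\varpi_h: h\le h_1\})\le v_r h_1$ and truncated support, and let $d_r:=\W_1$. Total boundedness would follow from uniform $p$-th moments via~\eqref{eq:wus:moments:bound}, which give tightness and uniform integrability of first moments, hence relative compactness in $\W_1$. Then for $f\in\Lipb(r)$ we have $|\int f\,\d\nu_1-\int f\,\d\nu_2|\le r\W_1(\nu_1,\nu_2)$. For the penalty term, $h_n\eta(\W_p(\varpi_{h_n},\nu)/h_n)$ is Lipschitz in $\W_p$ on $\{\eta<\infty\}$ with constant $\sup_{v\le v_r}\eta'(v)$ by local Lipschitz continuity and convexity. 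Controlling it by $\W_1$ would need $\W_p\lesssim\W_1^{1/p}$ on the truncated set. This gives Hölder continuity, i.e.\ Assumption~\ref{ass:en_rate} with $\beta_r=1/p$ and the uniform continuity of Assumption~\ref{ass:en}(ii). If the uniformity in $n$ of the truncation fails, the fallback is to use $d_r:=\W_p$ together with uniformly $p$-integrable (tight) sets, which are compact in $\W_p$. In that case the penalty is Lipschitz and the integral term is $r$-Lipschitz, so $\beta_r=1$.
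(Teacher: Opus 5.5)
Your verification of Assumption~\ref{ass:nisio} and of Assumption~\ref{ass:en}(i) is fine. So is the localization to $\W_p(\varpi_{h_n},\nu)\le v_rh_n$, which is \cite[Lemma~3.3]{fuhrmann23} and is also used in the paper. The observation that $\W_p$-bounded sets are $\W_1$-totally bounded for $p>1$ is correct as well.

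\textbf{The gap.} It is exactly the step you flag as hard. With $\Lambda=\cP_p(\Rd)$ and $\eta_n(\nu)=\eta(\W_p(\varpi_{h_n},\nu)/h_n)$, neither of your choices of $(\Lambda_r,d_r)$ satisfies Assumptions~\ref{ass:en}(ii) and~\ref{ass:en_rate}.

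\emph{The choice $d_r=\W_1$.} The penalty is not $\W_1$-continuous on the localized ball. Take $\nu_k:=(1-\epsilon_k)\varpi_{h_n}+\epsilon_k\varpi_{h_n}(\cdot-c_ke_1)$ with $\epsilon_kc_k^p=s^p$, $s:=v_rh_n/2$ and $c_k\to\infty$. Then $\W_1(\nu_k,\varpi_{h_n})\le s\epsilon_k^{1-1/p}\to 0$, but $s/4-o(1)\le\W_p(\nu_k,\varpi_{h_n})\le s$. So already for $f=0$ and $\eta(v)=v^6/6$, the difference in Assumption~\ref{ass:en}(ii) between $\lambda_1=\varpi_{h_n}$ and $\lambda_2=\nu_k$ stays bounded away from zero while $d_r(\lambda_1,\lambda_2)\to 0$.

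\emph{Truncating supports.} This removes such $\nu_k$, but it is not admissible. Both assumptions require $\lambda_{0,n}=\varpi_{h_n}\in\Lambda_r$ and \emph{exact} equality of the suprema. Yet $\varpi_{h_n}$ typically has unbounded support; it is $\cN(0,h_n)$ in the numerical example.

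\emph{The fallback $d_r=\W_p$.} This fails from the other side. The same $\nu_k$ have no $\W_p$-Cauchy subsequence, since a $\W_p$-limit would have to coincide with the $\W_1$-limit $\varpi_{h_n}$. Hence the localized ball is not $\W_p$-totally bounded. Restricting to a uniformly $p$-integrable family would require showing that the supremum is unchanged, uniformly in $n$, $x$ and $f\in\Lipb(r)$. That is a nontrivial attainment or duality statement, and you do not supply it.

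\textbf{The missing idea.} Lift the parameter so that the $\W_p$-dependence becomes a feasibility constraint instead of a penalty. The paper takes $\Lambda:=\cP_p(\Rd)\times\R_+$ and $I_{n,(\nu,a)}f:=\int_{\Rd}f(\cdot+z)\,\nu(\d z)$. It sets $\eta_n(\nu,a):=\eta(a)$ if $\W_p(\varpi_{h_n},\nu)\le ah_n$, and $\eta_n(\nu,a):=+\infty$ otherwise. Because $\eta$ is non-decreasing, this leaves $I_n$ unchanged.

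On $\Lambda_{r,n}=\Lambda_r\cap\{\eta_n<\infty\}$ the penalty then depends only on $a$. Take $\Lambda_r:=\{\nu\colon\W_p(\nu,\delta_0)\le C_r\}\times[0,R_r]$, with $R_r$ your $v_r$ and $C_r:=\sup_n(R_rh_n+\W_p(\varpi_{h_n},\delta_0))$. This set contains $\lambda_{0,n}=(\varpi_{h_n},0)$ and is $\W_1$-compact since $p>1$. Use the metric $d((\nu_1,a_1),(\nu_2,a_2)):=\W_1(\nu_1,\nu_2)+|a_1-a_2|$.

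The Kantorovich--Rubinstein inequality, together with the Lipschitz continuity of $\eta$ on $[0,R_r]\cap\{\eta<\infty\}$, then gives Assumption~\ref{ass:en_rate} with $L_r=\max\{r,\bar h c_r\}$ and $\beta_r=1$. Assumption~\ref{ass:en}(ii) follows. No truncation and no comparison of $\W_p$ with $\W_1$ is needed.
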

\begin{proof}
	See Appendix~\ref{app:wus}.
\end{proof}

Looking at the generator in equation~\eqref{eq:wus:generator}, it becomes apparent 
that the same semigroup can be constructed by only taking parametric drift uncertainty
into consideration. Indeed, let 
\begin{equation} \label{eq:wus:nisio}
	(J_n f)(x):=\sup_{\lambda\in\Rd}\left(\int_{\Rd}f(x+\lambda h_n+y)\,\varpi_{h_n}(\d y) 
	-\eta(|\lambda|)h_n\right)
\end{equation}
for all $n\in\N$, $f\in\Cb$ and $x\in\Rd$.\ By~\cite[Section~4.5]{fuhrmann23},
there exists another strongly continuous convex monotone semigroup $(\tilde{S}(t))_{t\geq 0}$ 
on $\Cb$ given by 
\[ \tilde{S}(t)f:=\lim_{n\to\infty}J(\pi_n^t)f \quad\text{for all } t\geq 0 \text{ and } f\in\Cb \]
whose generator is given by 
\[ \tilde{A}f=\sup_{\lambda\in\Rd}\big(Bf+\lambda^T\nabla f-\eta(|\lambda|)\big)
=Bf+\eta^*(|\nabla f|) \quad\text{for all } f\in D(B)\cap\C_b^1. \]

\begin{lemma}\label{lem:wus_nisio}
	Let $\alpha:=1$, $\kappa \equiv 1$ and assume $\Cbi\subset D(B)$.\ Then, the operators  
	$(J_n)_{n\in\N}$ defined by equation~\eqref{eq:wus:nisio} satisfy the 
	Assumptions~\ref{ass:nisio},~\ref{ass:en} and~\ref{ass:en_rate}. In particular,
	\[ S(t)f=\tilde{S}(t)f \quad\text{for all } t\geq 0 \text{ and } f\in\Cb. \]
\end{lemma}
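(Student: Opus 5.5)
We verify the assumptions for the family $I_{n,\lambda}f(x):=\int_{\Rd}f(x+\lambda h_n+y)\,\varpi_{h_n}(\d y)$, $\lambda\in\Lambda:=\Rd$, with $\eta_n(\lambda):=\eta(|\lambda|)$, and then deduce $S=\tilde S$ from the uniqueness part of Theorem~\ref{thm:chernoff}.

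\textbf{Assumption~\ref{ass:nisio}.} Conditions (i)--(iv) and (vi) are direct. For (i) take $\lambda_{0,n}:=0$, since $\eta(0)=0$. Each $I_{n,\lambda}$ is a Markov kernel, hence linear and monotone with $\|I_{n,\lambda}f\|_\infty\le\|f\|_\infty$, which gives (ii)--(iii) with $\omega=0$. It commutes with translations, which gives (iv) with no error term. It preserves $\Lipb(r)$, which gives (vi).

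For (v) I would use that the perturbations $\lambda h_n$ matter only for $|\lambda|$ bounded. Since $\liminf_{v\to\infty}\eta(v)/v^p>0$, for $f\in B_{\Cb}(r)$ only $\lambda$ with $\eta(|\lambda|)h_n\le 2r$ can contribute. Tightness of $\varpi_{h_n}$ follows from~\eqref{eq:wus:moments:bound}. Together these give the compact-set estimate by the same arguments as in~\cite{fuhrmann23}, or via the equivalent $f_k\downarrow0$ criterion from~\cite[Lemma~C.2]{denk25}. Alternatively, one can dominate $J_n$ by $I_n$ from~\eqref{eq:wus:chernoff}. Indeed, the shifted measure $\nu=\varpi_{h_n}*\delta_{\lambda h_n}$ satisfies $\W_p(\varpi_{h_n},\nu)\le|\lambda|h_n$, so $J_n\le I_n$. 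Monotonicity then transfers (v) from Lemma~\ref{lem:wus}.

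For (vii') take $\Lambda_0:=\{|\lambda|\le R_f\}$ with $R_f$ chosen from the growth of $\eta$ and $\|\nabla f\|_\infty$. Then Taylor's formula and $\Cbi\subset D(B)$ give $A_\lambda f=Bf+\lambda^\top\nabla f$ together with the required uniform convergence on $\Lambda_0$.

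\textbf{Assumptions~\ref{ass:en} and~\ref{ass:en_rate}.} For (i), translation and convolution with a probability measure preserve the Lipschitz constant and the sup norm, so $\|I_{n,\lambda}f\|_{1,1}\le\|f\|_{1,1}$. For the rate assumption take $\Lambda_r:=\{|\lambda|\le R_r\}$ with the Euclidean metric. Choose $R_r$ so that $\eta(R_r)\ge 2r+1$, and note that $h_n$ cancels in the relevant comparison: $\lambda$ with $\eta(|\lambda|)>2r$ lose to $\lambda=0$, which is where the bound $\|f\|_\infty\le r$ is used. Then $\Lambda_{r,n}=\Lambda_r\cap\{\eta<\infty\}$. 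For $\lambda_1,\lambda_2$ in this set,
\[
|I_{n,\lambda_1}f-I_{n,\lambda_2}f|\le r h_n|\lambda_1-\lambda_2|, \qquad
h_n|\eta(|\lambda_1|)-\eta(|\lambda_2|)|\le h_n L|\lambda_1-\lambda_2|,
\]
where $L$ is a Lipschitz constant of $\eta$ on $[0,R_r]\cap\{\eta<\infty\}$. This yields $\beta_r=1$.

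\textbf{Main obstacle.} The delicate point is the local Lipschitz continuity of $\eta$ near the boundary of $\{\eta<\infty\}$. If $\{\eta<\infty\}=[0,a]$, then $\eta$ may blow up in slope near $a$. I would first shrink $R_r$ below $a$. That requires showing that near-boundary $\lambda$ are suboptimal, or can be replaced by slightly smaller $\lambda$ at small cost, which uses convexity of $\eta$. Failing that, I would use the local Lipschitz continuity on the compact set $[0,R_r]\cap\{\eta<\infty\}$ as given in the hypotheses.

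\textbf{Identification $S=\tilde S$.} Both semigroups satisfy (iv)--(v) of Theorem~\ref{thm:chernoff}. By Theorem~\ref{thm:nisio}, the generator of $\tilde S$ on $\Cbi$ is $\sup_\lambda(Bf+\lambda^\top\nabla f-\eta(|\lambda|))=Bf+\eta^*(|\nabla f|)$, which coincides with $Af$ from~\eqref{eq:wus:generator}. The uniqueness statement in Theorem~\ref{thm:chernoff} then yields $S(t)f=\tilde S(t)f$ for all $t\ge0$ and $f\in\Cb$.
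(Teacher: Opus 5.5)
\textbf{Genuine gap: the construction of $\Lambda_r$ for Assumption~\ref{ass:en_rate}.} This is the one step the paper proves by hand, and your version of it fails.

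Write $J_{n,\lambda}f(x):=\int_{\Rd}f(x+\lambda h_n+y)\,\varpi_{h_n}(\d y)$ (your $I_{n,\lambda}$). Your claim that ``$h_n$ cancels'' when you compare with $\lambda=0$ using $\|f\|_\infty\le r$ is false. The sup-norm bound only gives
\[ \big(J_{n,\lambda}f-\eta(|\lambda|)h_n\big)-J_{n,0}f\le 2r-\eta(|\lambda|)h_n . \]
So it certifies that $\lambda$ loses to $0$ only when $\eta(|\lambda|)\ge 2r/h_n$. That threshold depends on $n$ and blows up as $h_n\to 0$ whenever $\eta$ is finite everywhere (e.g.\ $\eta(v)=\frac16 v^6$ in the numerics). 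But $\Lambda_r$ must serve all $n$ simultaneously.

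Your condition $\eta(R_r)\ge 2r+1$ is in fact insufficient. Take $d=1$, $p=2$, $\eta(v)=v^2$, $\varpi_t=\cN(0,t)$, $r=20$ and $f(x)=\max\{-20,\min\{20,20x\}\}\in\Lipb(20)$. Then
\[ (J_{n,\lambda}f)(0)-\eta(|\lambda|)h_n=(20\lambda-\lambda^2)h_n+o(h_n) \]
uniformly for $|\lambda|\le 10$. The supremum at $x=0$ is therefore about $100h_n$, attained near $\lambda=10$. The admissible choice $R_r=\sqrt{41}$ gives only about $87h_n$. Hence $\sup_{\Lambda}\neq\sup_{\Lambda_{r,n}}$ for small $h_n$.

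\textbf{The fix.} Use the Lipschitz half of $\|f\|_{1,1}\le r$: $|J_{n,\lambda}f-J_{n,0}f|\le r|\lambda|h_n$. Since $\eta(0)=0$, this gives
\[ \big(J_{n,\lambda}f-\eta(|\lambda|)h_n\big)-\big(J_{n,0}f-\eta(0)h_n\big)\le\big(r|\lambda|-\eta(|\lambda|)\big)h_n , \]
and now $h_n$ genuinely factors out. Superlinear growth, $\liminf_{v\to\infty}\eta(v)/v^p>0$ with $p>1$, yields $R_r$ independent of $n$ with $\eta(v)\ge rv$ for all $v\ge R_r$. So $\Lambda_r:=\{|\lambda|\le R_r\}$ works. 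The radius is dictated by the slope condition $\eta(v)/v\ge r$, not by the level $\eta(R_r)\gtrsim 2r$. This is exactly the paper's argument.

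It is also the mechanism you already invoke correctly for $\Lambda_0$ in (vii'), where $R_f$ comes from $\|\nabla f\|_\infty$ and the growth of $\eta$.

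\textbf{The rest of your outline.} With the corrected $\Lambda_r$, your Lipschitz estimate in $\lambda$ completes Assumption~\ref{ass:en_rate}; it uses Lipschitz continuity of $\eta$ on $[0,R_r]\cap\{\eta<\infty\}$, which the paper also takes from the hypotheses. Assumption~\ref{ass:en}(ii) then follows. Your other steps are fine:
\begin{itemize}
\item Assumption~\ref{ass:en}(i).
\item The transfer of Assumption~\ref{ass:nisio}(v) via $0\le J_n^k f_k\le I_n^k f_k$ and the $f_k\downarrow 0$ criterion.
\item The identification $S=\tilde S$ through the uniqueness part of Theorem~\ref{thm:chernoff}.
\end{itemize}
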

\begin{proof}
	See Appendix~\ref{app:wus_nisio}.
\end{proof}

By choosing $\nu=\varpi_{h_n}*\delta_{\lambda h_n}$ for all $\lambda\in\Rd$,
we see that $J_n f\leq I_n f$ for all $n\in\N$ and $f\in\Cb$. It has further
been shown in~\cite[Lemma~3.10 and Section~4.5]{fuhrmann23} that the iterated
operators satisfy
\[ J_n^{k^t_n}f\leq S(t)f\leq I_n^{k^t_n}f
\quad\text{for all } t\geq 0, n\in\N \text{ and } f\in\Cb.\]
In addition, if $(h_n)_{n\in\N}$ defines a sequence of refining partitions,
e.g., for $h_n:=2^{-n}$, it holds
\[ I(\pi_n^t)f\downarrow S(t)f \quad\text{and}\quad J(\pi_n^t)f\uparrow S(t)f
\quad\text{for all } t\geq 0 \text{ and } f\in\Cb \]
meaning that the first sequence is decreasing and the second one is increasing.

For our numerical example, we choose $d=1$, a bounded time horizon $T=1$, $n=30$,
$h_n=\frac{1}{30}$ and consider $(X_t)_{t\geq 0}=(W_t)_{t\geq 0}$.\ Furthermore, 
let $p=2$ and $\eta(v)=\frac{1}{6}v^6$ with convex conjugate $\eta^*(w)=\frac{5}{6}w^{6/5}$.\
Note that the mapping $v\mapsto\eta(v^{1/2})=\frac{v^3}{6}$ is convex and 
$\lim_{v\to\infty}\frac{\eta(v)}{v^2}=\infty$. Since the rescaled penalty heavily 
penalizes all measures $\nu\in\cP_p(\Rd)$ with $\W_p(\varpi_{h_n},\nu)>h_n$,
the supremum in equation~\eqref{eq:wus:chernoff} can be seen as a smoothed 
version of the supremum over a Wasserstein ball. We therefore proceed as in 
Section~\ref{sec:soc}.\ To that end, we generate $I := 1000$ i.i.d.\ midpoints
$(c_i)_{i=1,\ldots,I}\sim\cN(0,1)$ and half-widths 
$(w_i)_{i=1,\ldots,I}\sim\cU(\frac{1}{4},\frac{3}{2})$, set $K_{i,1}:=c_i-w_i$
and $K_{i,2}:=c_i+w_i$ and define the bull and bear spread functions
\begin{equation} \label{eq:wus:train}
	\begin{aligned}
		f_{i,1}(x) &=(x-K_{i,1})_+ -(x-K_{i,2})_+, & f_{i,2}(x) &=(K_{i,2}-x)_+ -(K_{i,1}-x)_+, \\
		f_{i,3}(x) &=-f_{i,1}(x), & f_{i,4}(x) &=-f_{i,2}(x),
	\end{aligned}
\end{equation}
which are split up into $90\%/10\%$ for training and testing equally among
the different types. 

We implement $I_n$ as an envelope-neural operator using the parametric representation 
of~\cite{nendel26}, where the supremum in equation~\eqref{eq:wus:chernoff} is restricted
to measures of the form $\varpi_{h_n}\circ(\id+h_n a)^{-1}$ with vector fields 
$a\in L^p(\varpi_{h_n};\R)$.\
Restricting the supremum yields the envelope-neural operator
\begin{equation} \label{eq:wus:en_param}
	(\Phi^I_n f)(x):=\max_{m=1,\ldots,M}\left(\int_{\R}f(x+y+a_m(y))\,\varpi_{h_n}(\d y) 
	-h_n\eta\left(\frac{\|a_m\|_{L^p(\varpi_{h_n};\R)}}{h_n}\right)\right)
\end{equation}
with finitely many trainable neurons $(a_m)_{m=1,\ldots,M}\subset L^p(\varpi_{h_n};\R)$.\ 
For the implementation, we choose affine functions $a_{(\varrho,\beta)}(y):=\varrho+\beta y$
with $\varrho,\beta\in\R$ for which the transported measure is given by 
$\varpi_{h_n}\circ (\id+a_{(\varrho,\beta)})^{-1}=\cN(\varrho,(1+\beta)^2 h_n)$.
Hence, for the piecewise linear payoffs defined by equation~\eqref{eq:wus:train}
the inner integral in equation~\eqref{eq:wus:en_param} is a Bachelier price and
therefore available in closed form. Moreover, the penalization term is explicitly
given by $\|a_{(\varrho,\beta)}\|^2_{L^2(\varpi_{h_n})}=|\varrho|^2+\beta^2 h_n$.
For the operator $J_n$ defined by equation~\eqref{eq:wus:nisio}, the envelope-neural 
operator is given by
\begin{equation} \label{eq:wus:en}
	(\Phi_n^J f)(x):=\max_{m=1,\ldots,M}
	\left(\int_{\R}f(x+\lambda_m h_n+y)\,\varpi_{h_n}(\d y)-\eta(|\lambda_m|)h_n\right)
\end{equation}
with finitely many trainable neurons $(\lambda_m)_{m=1,\ldots,M}\subset\R$.

The neural operators are trained by minimizing the MSE~\eqref{eq:mse} over $L=100$
i.i.d.~evaluation points $(x_l)_{l=1,\ldots,L}\sim\cN(0,\sigma^2)$ with $\sigma=3$.\
Both envelope-neural operators use $M=32$ neurons and the maximum readouts of the
equations~\eqref{eq:wus:en_param} and~\eqref{eq:wus:en}, respectively, so that 
$\Phi^I_n$ has~$64$ and $\Phi^J_n$ has~$32$ trainable parameters.\ Both are trained 
by applying the Adam algorithm over~$10^4$ epochs with learning rate $10^{-5}$
and batchsize $100$ per type $j=1,\ldots,4$.\ The training labels $I_n f_{i,j}$ of 
the envelope-neural operator $\Phi^I_n$ are computed from the dual representation 
\begin{equation} \label{eq:wus:dual}
	(I_n f)(x)=\min_{\lambda\geq 0}\left(\int_{\R}\sup_{w\in\R}
	\big(f(x+y+w)-\lambda |w|^p\big)\,\varpi_{h_n}(\d y)+G^*(\lambda)\right),
\end{equation}
see \cite[Theorem~2.4]{bartl2020}, where $G^*$ denotes the convex conjugate of 
$G(\delta):=h_n\eta (\delta^{1/p}/h_n)$.\ In our case, we have
$G(\delta)=\frac{\delta^3}{6h_n^5}$ and $G^*(\lambda)=\frac{2}{3}\sqrt{2h_n^5}\lambda^{3/2}$.\
Note that, for a one-dimensional piecewise linear payoff, the computation of the
inner supremum in equation~\eqref{eq:wus:dual} reduces to maximizing over a finite 
set of points and the integral can be computed with a one-dimensional quadrature.\
The training labels $J_n f_{i,j}$ of the envelope-neural operator $\Phi^J_n$
are obtained by evaluating the supremum in equation~\eqref{eq:wus:nisio} over
a discretized grid of drifts $\lambda\in\R$.

In order to assess the out-of-sample performance of the trained operators, 
we evaluate them on the butterfly option
\begin{equation} \label{eq:wus:butterfly}
	f^{\mathrm{bf}}(x):=(x-K_1)_+ -2(x-K_2)_+ +(x-K_3)_+ \quad\text{with}\quad
	K_1:=-1, \, K_2 := 0 \text{ and } K_3 := 1
\end{equation}
which is not monotone and therefore does not belong to any of the four classes 
of training functions defined in~\eqref{eq:wus:train}.\ As reference solution, 
we compute $S(t)f^{\mathrm{bf}}$ using a finite difference scheme for the 
corresponding HJB equation.

The results of the training are reported in Figure~\ref{fig:wus}. In Figure~\ref{fig:wus_sandwich},
we repeat the training of both neural operators for different numbers of steps~$n$ 
with step size $h_n=T/n$ and display the two terminal values $(\Phi^I_n)^{k^T_n} f^{\mathrm{bf}}$
and $(\Phi_n^J)^{k^T_n} f^{\mathrm{bf}}$ together with the finite difference 
solution $S(T) f^{\mathrm{bf}}$ at the final time $T$.\ The gap between the iterated
envelope-neural operators is decreasing in~$n$ which illustrates how the nonparametric
Wasserstein uncertainty and the parametric drift uncertainty lead to the same continuous
time limit in accordance with Lemma~\ref{lem:wus_nisio}.

\begin{figure}[ht!]
	\centering
	\begin{minipage}{0.49\textwidth}
		\centering
		\includegraphics[height=5.5cm]{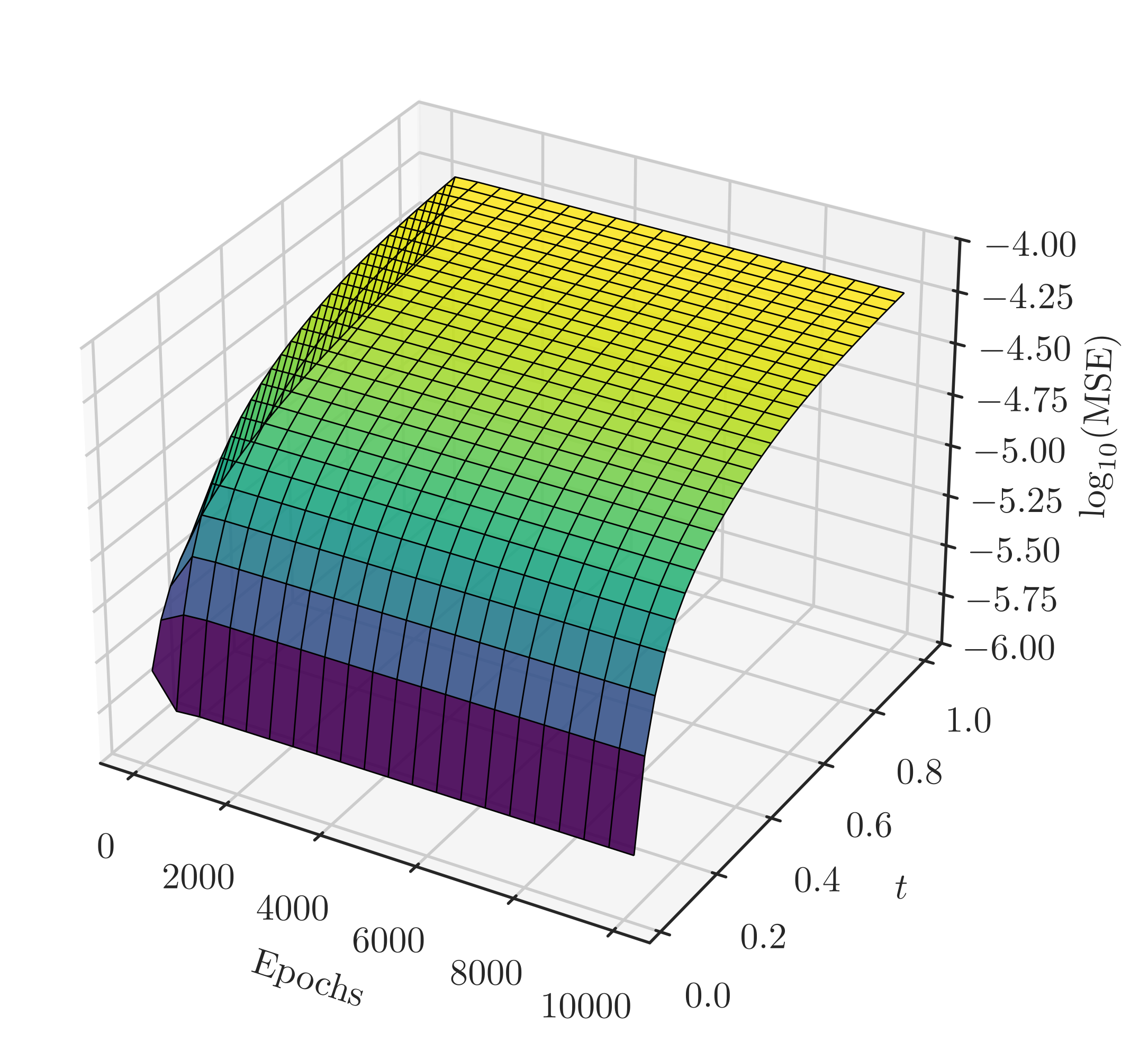}
		
		{\footnotesize ($\mathcal{I}$1) MSE between $(\Phi^I_n)^{k^t_n} f^{\mathrm{bf}}$ and $S(t) f^{\mathrm{bf}}$ along training epochs and time.}
	\end{minipage}
	\begin{minipage}{0.49\textwidth}
		\centering
		\includegraphics[height=5.5cm]{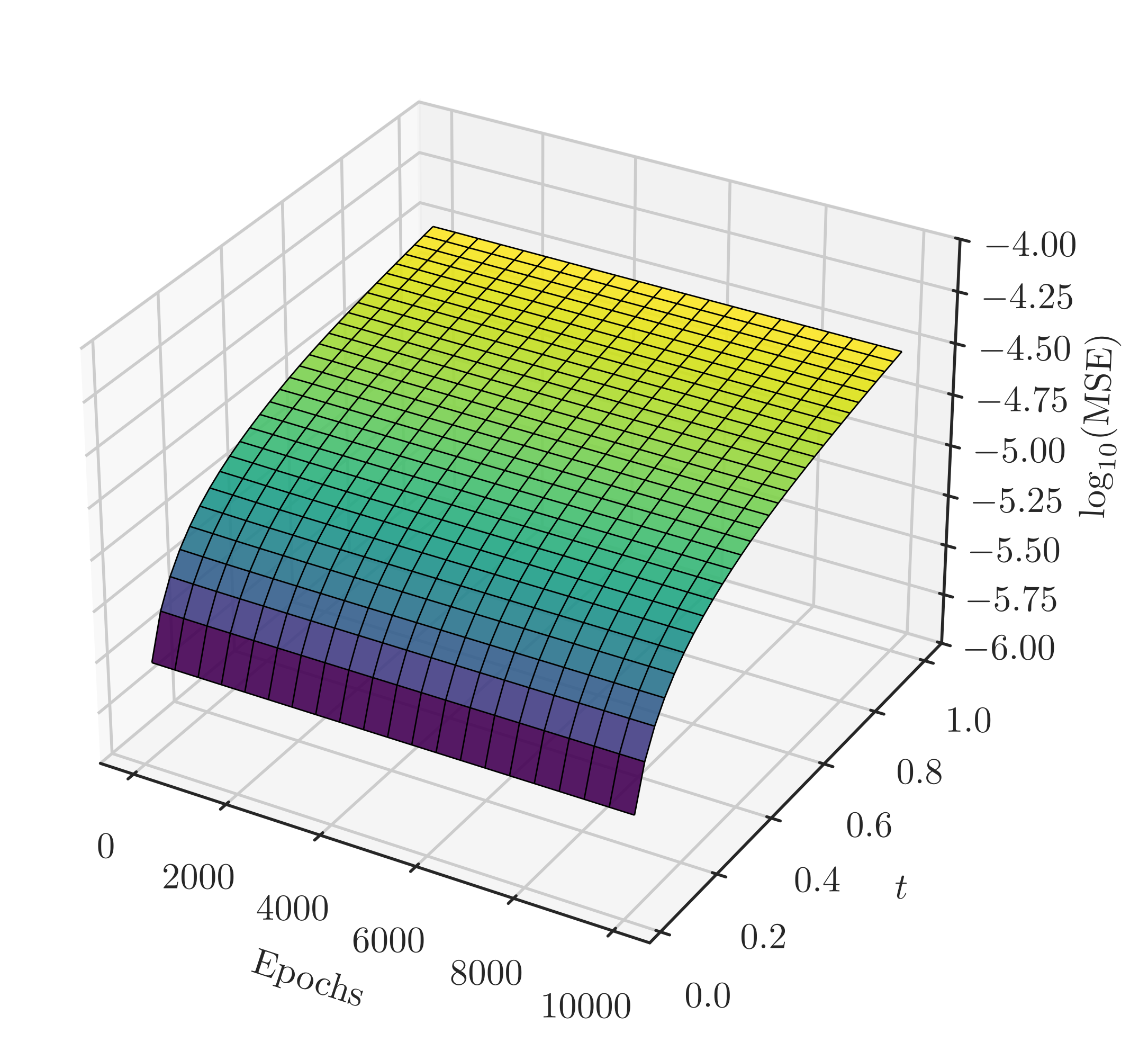}
		
		{\footnotesize ($\mathcal{J}$1) MSE between $(\Phi^J_n)^{k^t_n} f^{\mathrm{bf}}$ and $S(t) f^{\mathrm{bf}}$ along training epochs and time.}
	\end{minipage}
	\vspace{0.4cm}
	
	\begin{minipage}{0.49\textwidth}
		\centering
		\includegraphics[height=5.5cm]{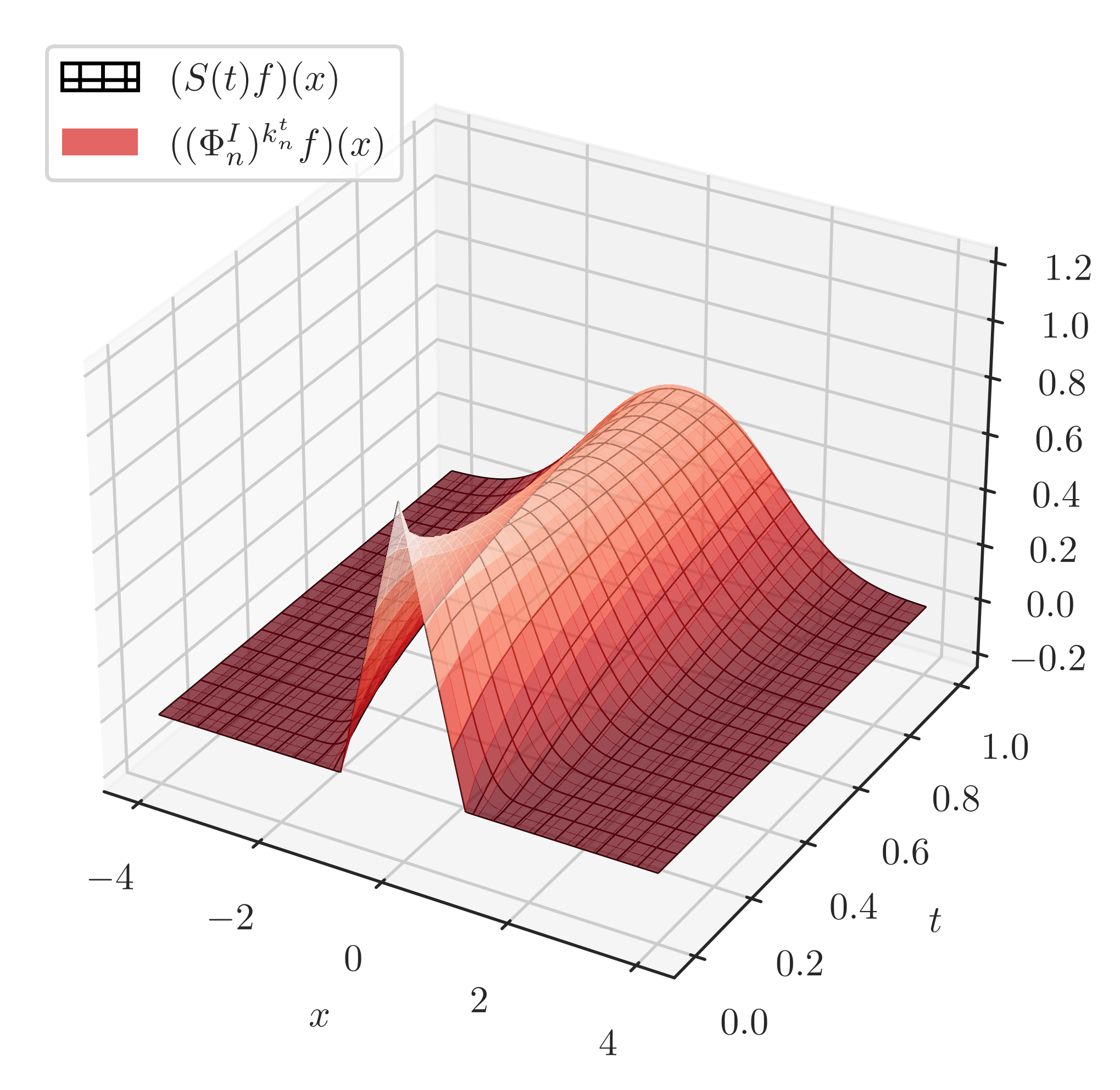}
		
		{\footnotesize ($\mathcal{I}$2) Approximation of $S(t) f^{\mathrm{bf}}$ by $(\Phi^I_n)^{k^t_n} f^{\mathrm{bf}}$.}
	\end{minipage}
	\begin{minipage}{0.49\textwidth}
		\centering
		\includegraphics[height=5.5cm]{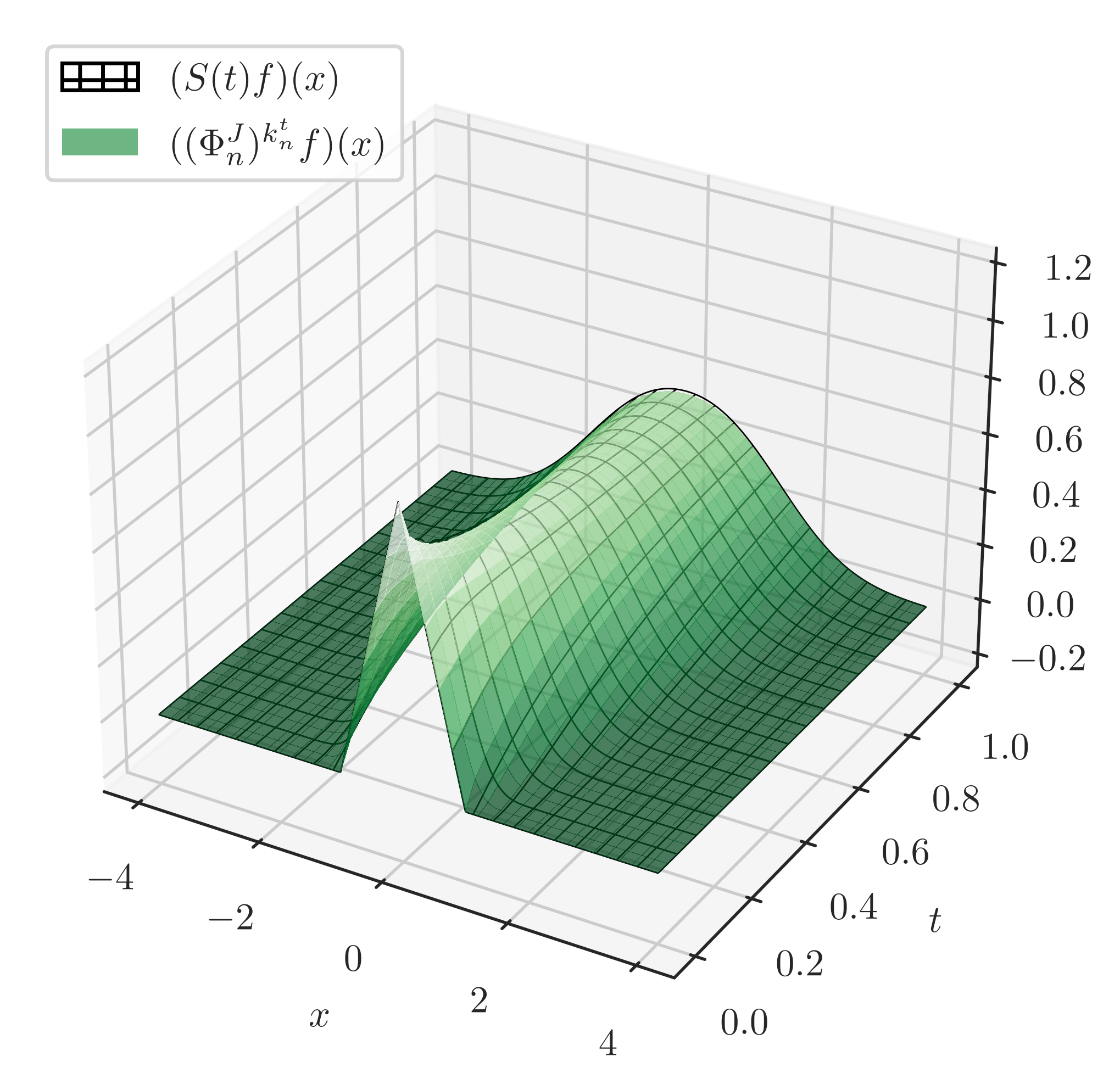}
		
		{\footnotesize ($\mathcal{J}$2) Approximation of $S(t) f^{\mathrm{bf}}$ by $(\Phi^J_n)^{k^t_n} f^{\mathrm{bf}}$.}
	\end{minipage}
	\vspace{0.4cm}
	
	\caption{Approximating the Wasserstein-perturbation semigroup $f \mapsto S(t) f$ with two envelope-neural operators, evaluated on the out-of-sample butterfly $f^{\mathrm{bf}}$ in \eqref{eq:wus:butterfly}. The operator $\Phi^I_n$ in \eqref{eq:wus:en_param} in ($\mathcal{I}$1)-($\mathcal{I}$2), and the operator $\Phi^J_n$ in \eqref{eq:wus:en} in ($\mathcal{J}$1)-($\mathcal{J}$2). In ($\mathcal{I}$1)+($\mathcal{J}$1), the MSE between the iterations $(\Phi^I_n)^{k^t_n} f^{\mathrm{bf}}$, respectively $(\Phi_n^J)^{k^t_n} f^{\mathrm{bf}}$, and $S(t) f^{\mathrm{bf}}$ is displayed along the training epochs and the time $t = k h_n$, $k = 1,\ldots,n$. In ($\mathcal{I}$2)+($\mathcal{J}$2), the approximation of $S(t) f^{\mathrm{bf}}$ by the two iterated neural operators is compared with the finite difference solution of the corresponding HJB equation over $[0,T]$.}
	\label{fig:wus}
\end{figure}

\begin{figure}[ht!]
	\centering
	\includegraphics[width=0.9\textwidth]{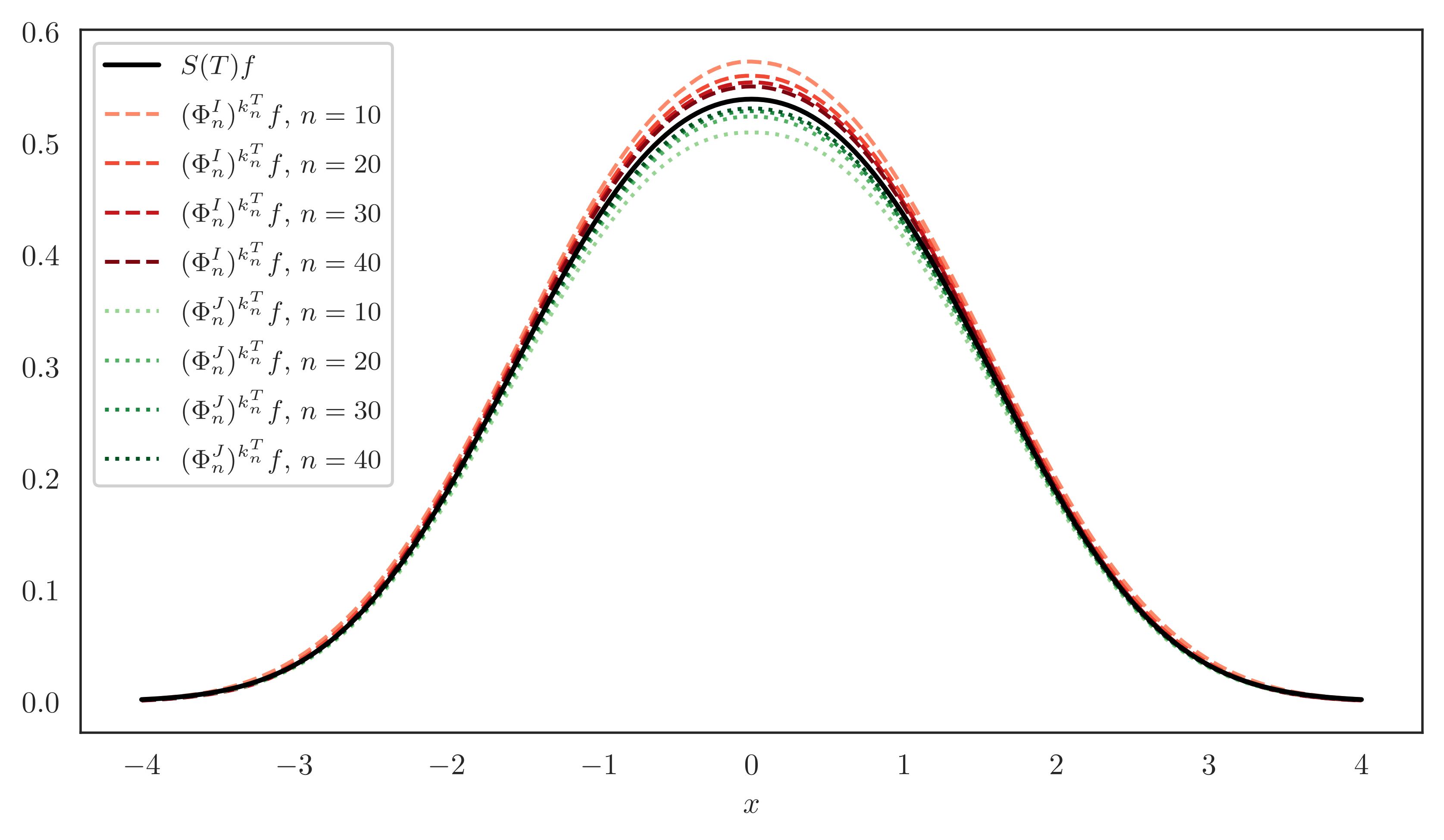}
	\caption{Terminal values $(\Phi^I_n)^{k^T_n} f^{\mathrm{bf}}$ and $(\Phi^J_n)^{k^T_n} f^{\mathrm{bf}}$ of the two envelope-neural operators of Section~\ref{sec:wus}, evaluated on the out-of-sample butterfly $f^{\mathrm{bf}}$ in \eqref{eq:wus:butterfly} and compared with the finite difference solution $S(T) f^{\mathrm{bf}}$ at the terminal time $T$. Both operators are retrained for each number of steps $n \in \{10, 20, 30, 40\}$, with step size $h_n = T/n$. The shades run from light to dark as $n$ grows.}
	\label{fig:wus_sandwich}
\end{figure}

\appendix

\section{Weighted H\"older spaces}
\label{app:holder}

We prove some basic properties of the weighted H\"older spaces from Section~\ref{sec:notation}.\
Recall that, for bounded continuous functions $\kappa',\kappa\colon\Rd\to (0,\infty)$, 
we write $\kappa'\lesssim\kappa$ if and only if $\kappa'\leq c\kappa$ for some constant $c\geq 0$,
and $\kappa'\llsim\kappa$ if and only if $\lim_{|x|\to\infty}\frac{\kappa'(x)}{\kappa(x)}=0$.
Furthermore, we define
\[ \bar{\kappa}(x,y):=\frac{2\kappa(x)\kappa(y)}{\kappa(x)+\kappa(y)}=\frac{2}{\kappa(x)^{-1}+\kappa(y)^{-1}}
\quad\text{for all } x,y\in\Rd. \]

\begin{proposition} \label{thm:hol_ban}
	Let $\alpha \in [0,1]$ and $\kappa\colon\Rd\to (0,\infty)$ be a bounded continuous function.\
	Then, the space $(\C^\alpha_\kappa,\|\cdot\|_{\alpha,\kappa})$ is complete.
\end{proposition}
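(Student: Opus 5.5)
We follow the standard completeness argument for Hölder-type spaces, with the weights $\kappa$ and $\bar{\kappa}$ carried along. Let $(f_n)_{n\in\N}\subset\C^\alpha_\kappa$ be a Cauchy sequence w.r.t.\ $\|\cdot\|_{\alpha,\kappa}$.

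\emph{Step 1: identify a pointwise limit.} Since $\|f_n-f_m\|_{\alpha,\kappa}\geq\sup_{x}|f_n(x)-f_m(x)|\kappa(x)$ and $\kappa(x)>0$, the sequence $(f_n(x))_{n\in\N}$ is Cauchy in $\R$ for every $x\in\Rd$. It therefore converges to some $f(x)$. Moreover, on every compact $K\Subset\Rd$ we have $\inf_K\kappa>0$, because $\kappa$ is continuous and strictly positive. Hence the convergence is uniform on compacts. This point matters for $\alpha=0$, where the seminorm part gives no continuity: since each $f_n\in\C_\kappa$ is continuous, the limit $f$ is continuous as well.

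\emph{Step 2: bounds for the limit and norm convergence.} Let $\epsilon>0$ and choose $N$ such that $\|f_n-f_m\|_{\alpha,\kappa}\leq\epsilon$ for all $n,m\geq N$. For fixed $x$, and for fixed $x\neq y$, the quantities
\[ |f_n(x)-f_m(x)|\kappa(x) \quad\text{and}\quad \frac{|(f_n-f_m)(x)-(f_n-f_m)(y)|}{|x-y|^\alpha}\bar{\kappa}(x,y) \]
are both at most $\epsilon$. They involve only finitely many point evaluations, so we may let $m\to\infty$ using the pointwise convergence from Step 1. This yields the same bounds with $f_m$ replaced by $f$, uniformly in $x$ and in $(x,y)$. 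Taking suprema gives $\|f_n-f\|_{\alpha,\kappa}\leq\epsilon$ for all $n\geq N$, where the left-hand side is understood as the same expression applied to $f_n-f$. In particular $f=f_N-(f_N-f)$ has finite $\|\cdot\|_{\alpha,\kappa}$-expression, so $f\in\C^\alpha_\kappa$ by the triangle inequality for the seminorm and the sup part. Consequently $f_n\to f$ in $\C^\alpha_\kappa$.

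No step is a genuine obstacle. The only care needed concerns membership in the space, namely continuity of $f$ in the case $\alpha=0$ where $\C^0_\kappa=\Ck$. This is handled by the locally uniform convergence from Step 1, which relies on $\kappa$ being bounded away from zero on compact sets. For $\alpha>0$, continuity of $f$ follows directly from the finite weighted Hölder seminorm together with the local positivity of $\bar{\kappa}$.
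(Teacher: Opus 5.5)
Your proof is correct and follows essentially the same route as the paper. The paper obtains the limit $f$ by invoking completeness of $\C_\kappa$, which your Step~1 reproves via pointwise and locally uniform convergence; it then lets $m\to\infty$ in the weighted H\"older quotient of $f_m-f_n$ pointwise in $(x,y)$, exactly as in your Step~2.
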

\begin{proof}
	Let $(f_n)_{n\in\N}\subset\C^\alpha_\kappa$ be a Cauchy sequence.\ Since $(f_n)_{n\in\N}$ 
	is also a Cauchy sequence in the Banach space $\C_\kappa$, there exists $f\in\C_\kappa$ with 
	$\|f-f_n\|_\kappa\to 0$. By definition of the norm, we obtain
	\[ \|f\|_{\alpha,\kappa}\leq\sup_{n\in\N}\|f_n\|_{\alpha,\kappa}<\infty. \]
	Moreover, for every $\epsilon>0$, there exist $n_0\in\N$ with $\|f_m-f_n\|_{\alpha,\kappa}<\epsilon$ 
	for all $m,n\geq n_0$. Hence, for every $x,y\in\Rd$ with $x\neq y$ and $n\geq n_0$, 
	we obtain 
	\begin{align*}
		\frac{|(f-f_n)(x)-(f-f_n)(y)|}{|x-y|^\alpha}\bar{\kappa}(x,y)
		&=\lim_{m\to\infty}\frac{|(f_m-f_n)(x)-(f_m-f_n)(y)|}{|x-y|^\alpha}\bar{\kappa}(x,y) \\
		&\leq\lim_{m\to\infty}\|f_m-f_n\|_{\alpha,\kappa}\leq\epsilon. \qedhere
	\end{align*}
\end{proof}

\begin{theorem} \label{thm:hol_cont}
	Let $0\leq\alpha'\leq\alpha\leq 1$ and $\kappa'\lesssim\kappa\colon\Rd\to (0,\infty)$ be bounded 
	continuous functions. Then, the embedding $\C^\alpha_\kappa\hookrightarrow \C^{\alpha'}_{\kappa'}$ 
	is continuous.\ Moreover, if $\alpha'<\alpha$ and $\kappa'\llsim\kappa$, then 
	$\C^\alpha_\kappa\hookrightarrow\c^{\alpha'}_{\kappa'}$. 
\end{theorem}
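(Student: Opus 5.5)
The plan is to prove the two claims separately. Both reduce to elementary inequalities between the weights and to interpolating between the sup-part and the Hölder part of the norm.

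\emph{Continuity of the embedding.} Let $\kappa'\leq c\kappa$ and $f\in\C^\alpha_\kappa$. The weighted sup-part is immediate, since $|f(x)|\kappa'(x)\leq c|f(x)|\kappa(x)$. For the weights in the seminorm we have $\bar{\kappa}'(x,y)\leq 2\min(\kappa'(x),\kappa'(y))\leq 2c\min(\kappa(x),\kappa(y))\leq 2c\,\bar{\kappa}(x,y)$. I will split into two cases according to $|x-y|$.
\begin{itemize}
\item If $|x-y|\leq 1$, then $|x-y|^{-\alpha'}\leq|x-y|^{-\alpha}$, so
\[ \frac{|f(x)-f(y)|}{|x-y|^{\alpha'}}\,\bar{\kappa}'(x,y)\leq 2c\,\|f\|_{\alpha,\kappa}. \]
\item If $|x-y|>1$, I bound $|f(x)-f(y)|\,\bar{\kappa}'(x,y)\leq 2c\big(|f(x)|\kappa(x)+|f(y)|\kappa(y)\big)\leq 4c\|f\|_\kappa$, using $\min(\kappa(x),\kappa(y))\leq\kappa(x),\kappa(y)$.
\end{itemize}
Together these give $\|f\|_{\alpha',\kappa'}\leq 4c\|f\|_{\alpha,\kappa}$, and linearity yields continuity.

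\emph{Landing in the little space.} Assume now $\alpha'<\alpha$ and $\kappa'\llsim\kappa$. The decay condition is direct: $|f(x)|\kappa'(x)\leq\|f\|_\kappa\,\kappa'(x)/\kappa(x)\to0$. For the vanishing Hölder modulus, take $|x-y|<\delta\leq1$. Then
\[ \frac{|f(x)-f(y)|}{|x-y|^{\alpha'}}\,\bar{\kappa}'(x,y)\leq\delta^{\alpha-\alpha'}\,\frac{|f(x)-f(y)|}{|x-y|^{\alpha}}\,\bar{\kappa}'(x,y)\leq 2c\,\delta^{\alpha-\alpha'}\|f\|_{\alpha,\kappa}, \]
which tends to $0$ as $\delta\to0$ because $\alpha'<\alpha$.

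I expect the main obstacle to be the case $\alpha=\alpha'$ in the first claim when $\alpha'=0$ or the weights differ. It is handled by the $|x-y|>1$ split above, so no growth condition on $\kappa$ such as \eqref{eq:weight_cond} is needed. The only delicate point is the comparison $\bar{\kappa}'\lesssim\bar{\kappa}$, which follows from the two-sided bound $\min\leq\bar{\kappa}\leq2\min$. Continuity of $f$, needed for membership in $\C_{\kappa'}$, is automatic: for $\alpha>0$ from the local Hölder bound, and for $\alpha=0$ because $\C^0_\kappa=\Ck$.
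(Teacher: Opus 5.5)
Your proof is correct and follows the paper's argument: split at $|x-y|=1$, bound the large-distance quotients by the weighted sup-norm to get $\|f\|_{\alpha',\kappa'}\le 4c\|f\|_{\alpha,\kappa}$, and use the factor $\delta^{\alpha-\alpha'}$ for the little space. The paper obtains $\bar{\kappa}'\le c\,\bar{\kappa}$ directly from the harmonic-mean formula instead of your $2c$, which makes no difference. One small correction to your closing commentary: the $|x-y|>1$ case is needed precisely when $\alpha'<\alpha$, whereas for $\alpha'=\alpha$ the comparison $\bar{\kappa}'\le 2c\,\bar{\kappa}$ already bounds the seminorm for all $x\neq y$.
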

\begin{proof}
	Let $f \in \C^\alpha_\kappa$. For every $x,y\in\Rd$ with $|x-y|\leq 1$, it follows 
	from $\alpha'\leq\alpha$ and $\kappa'\leq c\kappa$ that 
	$\bar{\kappa}'(x,y)=\frac{2}{\kappa'(x)^{-1}+\kappa'(y)^{-1}}
	\leq c\frac{2}{\kappa(x)^{-1}+\kappa(y)^{-1}}\leq c\bar{\kappa}(x,y)$ and therefore
	\[ \frac{|f(x)-f(y)|}{|x-y|^{\alpha'}}\bar{\kappa}'(x,y) 
	\leq c\frac{|f(x)-f(y)|}{|x-y|^\alpha}\bar{\kappa}(x,y). \]
	Moreover, for every $x,y\in\Rd$ with $|x-y|>1$, the inequality 
	$\bar{\kappa}'(x,y) \leq 2\min(\kappa'(x),\kappa'(y))$ implies
	\[ \frac{|f(x)-f(y)|}{|x-y|^{\alpha'}}\bar{\kappa}'(x,y) 
	\leq 2|f(x)-f(y)|\min(\kappa'(x),\kappa'(y))
	\leq 2(|f(x)|\kappa'(x)+|f(y)|\kappa'(y)). \]
	Hence, it holds $\|f\|_{\alpha',\kappa'}\leq 4c\|f\|_{\alpha,\kappa}$ for all $f\in\C^\alpha_\kappa$.
	
	Now, let $\alpha'<\alpha$ and $\kappa'\llsim\kappa$. For every $f\in\C^\alpha_\kappa$,
	it holds
	\[ \lim_{|x|\to\infty}|f(x)|\kappa'(x)
	\leq\|f\|_{\alpha,\kappa}\lim_{|x|\to\infty}\frac{\kappa'(x)}{\kappa(x)}=0. \]
	In addition, by using that $\kappa'\leq c\kappa$ implies $\bar{\kappa}'(x,y)\leq c\bar{\kappa}(x,y)$, we obtain
	\begin{align*}
		\lim_{\delta\to 0}\sup_{x\neq y,\atop |x-y|<\delta}\frac{|f(x)-f(y)|}{|x-y|^{\alpha'}}\bar{\kappa}'(x,y)
		&\leq c \lim_{\delta\to 0}\bigg(\delta^{\alpha-\alpha'} 
		\sup_{x\neq y,\atop |x-y|<\delta}\frac{|f(x)-f(y)|}{|x-y|^\alpha}\bar{\kappa}(x,y)\bigg) \\
		&\leq c\|f\|_{\alpha,\kappa}\lim_{\delta\to 0}\delta^{\alpha-\alpha'}=0. \qedhere 
	\end{align*}
\end{proof}

\begin{theorem} \label{thm:hol_cpt}
	Let $0\leq\alpha'<\alpha\leq 1$ and $\kappa'\llsim\kappa\colon\Rd\to (0,\infty)$
	be bounded continuous functions.\ Then, the embedding 
	$\C^\alpha_\kappa\hookrightarrow\C^{\alpha'}_{\kappa'}$ is compact.
\end{theorem}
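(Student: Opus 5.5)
We show that the closed unit ball $B:=B_{\C^\alpha_\kappa}(1)$ is relatively compact in $\C^{\alpha'}_{\kappa'}$, i.e.\ that every sequence $(f_n)_{n\in\N}\subset B$ has a subsequence which is Cauchy in $\|\cdot\|_{\alpha',\kappa'}$. The argument has three parts: local compactness from Arzel\`a--Ascoli, control at infinity from $\kappa'\llsim\kappa$, and an interpolation estimate for the H\"older seminorm.

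\emph{Extracting a subsequence.} On every ball $B_{\Rd}(R)$ the function $\kappa$ is bounded below by a positive constant $m_R$, since it is continuous and positive. Consequently every $f\in B$ satisfies $|f|\le 1/m_R$ on $B_{\Rd}(R)$. Moreover, $\bar\kappa(x,y)\ge\min(\kappa(x),\kappa(y))\ge m_R$ for $x,y\in B_{\Rd}(R)$, so $f$ is $\alpha$-H\"older there with constant $1/m_R$ (if $\alpha=0$ we only need $\alpha'<\alpha$, which rules this case out anyway). Hence $B$ restricted to $B_{\Rd}(R)$ is uniformly bounded and equicontinuous. Arzel\`a--Ascoli together with a diagonal argument over $R\in\N$ yields a subsequence, again denoted $(f_n)$, converging locally uniformly to some continuous $f$. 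Passing to the pointwise limit in the defining inequalities shows $f\in B$. Put $g_n:=f_n-f$; then $\|g_n\|_{\alpha,\kappa}\le 2$ and $g_n\to 0$ locally uniformly. It remains to prove $\|g_n\|_{\alpha',\kappa'}\to 0$.

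\emph{Weighted sup-norm.} Fix $\epsilon>0$. Since $\kappa'\llsim\kappa$, we may choose $R$ with $\kappa'\le\epsilon\kappa$ outside $B_{\Rd}(R)$. Then
\[ \sup_{|x|>R}|g_n(x)|\kappa'(x)\le 2\epsilon, \]
while $\sup_{|x|\le R}|g_n(x)|\kappa'(x)\le\|\kappa'\|_\infty\|g_n\|_{\infty,B_{\Rd}(R)}\to 0$. Hence $\|g_n\|_{\kappa'}\to 0$.

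\emph{H\"older seminorm.} Here we control the quotient $Q_n(x,y):=\frac{|g_n(x)-g_n(y)|}{|x-y|^{\alpha'}}\bar\kappa'(x,y)$ by splitting the pairs $(x,y)$ into three regions.

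\begin{enumerate}
\item If $|x-y|\ge\delta$, then $Q_n(x,y)\le 2\delta^{-\alpha'}(|g_n(x)|\kappa'(x)+|g_n(y)|\kappa'(y))\le 4\delta^{-\alpha'}\|g_n\|_{\kappa'}$, which tends to $0$ for each fixed $\delta$.
\item If $|x-y|<\delta\le 1$ and, say, $|x|>R+1$, then $|y|>R$ as well. We want $\bar\kappa'(x,y)\le C\epsilon\,\bar\kappa(x,y)$. Since $\bar\kappa'\le 2\min(\kappa'(x),\kappa'(y))\le 2\epsilon\min(\kappa(x),\kappa(y))\le 2\epsilon\bar\kappa(x,y)$, this holds directly with $C=2$. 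Therefore $Q_n(x,y)\le 2\epsilon\,\delta^{\alpha-\alpha'}\cdot 2\le 4\epsilon$.
\item If $|x-y|<\delta$ and $x,y\in B_{\Rd}(R+1)$, we interpolate as in the proof of Theorem~\ref{thm:uat_nisio}:
\[ |g_n(x)-g_n(y)|\le \big(|g_n(x)-g_n(y)|\big)^{1-\alpha'/\alpha}\big(C_R|x-y|^\alpha\big)^{\alpha'/\alpha}, \]
where $C_R$ is the local H\"older bound from the first part. This gives $Q_n(x,y)\le C'_R\,\|g_n\|_{\infty,B_{\Rd}(R+1)}^{1-\alpha'/\alpha}\to 0$.
\end{enumerate}

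Choosing first $\epsilon$ and then $R$, and letting $n\to\infty$, we obtain $\limsup_n\|g_n\|_{\alpha',\kappa'}\lesssim\epsilon$. Since $\epsilon>0$ was arbitrary, $g_n\to 0$ in $\C^{\alpha'}_{\kappa'}$, which proves that the embedding is compact.

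The main obstacle is the region at infinity in the seminorm (case (ii)), which requires transferring the decay of $\kappa'/\kappa$ to the harmonic-type mean $\bar\kappa$. This works because $\bar\kappa$ is comparable to $\min(\kappa(x),\kappa(y))$ up to a factor of $2$, which is exactly why that comparability was built into the definition.
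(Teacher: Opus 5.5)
Your proof is correct. The Arzel\`a--Ascoli extraction and the proof that $\|g_n\|_{\kappa'}\to 0$ match the paper's argument. The difference is in how you handle the H\"older seminorm.

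The paper does not split the pairs $(x,y)$ into regions. For every $x\neq y$ it uses the exact identity
\[ \frac{|g_n(x)-g_n(y)|}{|x-y|^{\alpha'}}\bar\kappa'(x,y)=\Big(\frac{|g_n(x)-g_n(y)|}{|x-y|^{\alpha}}\bar\kappa'(x,y)\Big)^{\alpha'/\alpha}\big(|g_n(x)-g_n(y)|\bar\kappa'(x,y)\big)^{1-\alpha'/\alpha}. \]
The first factor is bounded by $c\|g_n\|_{\alpha,\kappa}$, via $\bar\kappa'\le c\bar\kappa$. This comes from the global bound $\kappa'\le c\kappa$, which follows from $\kappa'\llsim\kappa$ together with continuity and positivity. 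The second factor is bounded by a constant multiple of $\|g_n\|_{\kappa'}$, because $\bar\kappa'(x,y)$ is dominated by both $\kappa'(x)$ and $\kappa'(y)$ up to a factor $2$. This yields
\[ \sup_{x\neq y}\frac{|g_n(x)-g_n(y)|}{|x-y|^{\alpha'}}\bar\kappa'(x,y)\lesssim\big(c\|g_n\|_{\alpha,\kappa}\big)^{\alpha'/\alpha}\|g_n\|_{\kappa'}^{1-\alpha'/\alpha}, \]
so seminorm convergence follows in one line from the weighted sup-norm convergence you had already established.

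In particular, your case (ii), which you flagged as the main obstacle, is not needed. The decay of $\kappa'/\kappa$ at infinity only has to enter once, through $\|g_n\|_{\kappa'}\to 0$, and the weighted interpolation carries it over to the seminorm. Your three-region argument performs the same interpolation only locally, in case (iii), and makes up for this with separate treatments of large distances and of the region at infinity. It is longer but equally valid. Its only advantage is that it shows explicitly where each hypothesis is used: local positivity of $\kappa$ on compacts, boundedness of $\kappa'$, and decay of $\kappa'/\kappa$. It never needs the global comparison $\kappa'\le c\kappa$.
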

\begin{proof}
	Let $(f_n)_{n\in\N}\subset\C^\alpha_\kappa$ be bounded and $C:=\sup_{n\in\N}\|f_n\|_{\alpha,\kappa}$.\
	By Arzel\`a-Ascoli's theorem, there exists a diagonal sequence, which is again 
	denoted by $(f_n)_{n\in\N}$, and $f\in\C(\Rd)$ such that 
	\[\lim_{n\to\infty}\|f-f_n\|_{\infty,K}=0 \quad\text{for all } K\Subset\Rd.\]
	For every $x,y\in\Rd$, we use $\|f_n\|_{\alpha,\kappa}\leq C$ to obtain 
	\begin{align*}
		|f(x)|\kappa(x) &=\lim_{n\to\infty}|f_n(x)|\kappa(x)\leq C, \\
		|f(x)-f(y)|\bar{\kappa}(x,y)
		&=\lim_{n\to\infty}|f_n(x)-f_n(y)|\bar{\kappa}(x,y)\leq C|x-y|^\alpha
	\end{align*}
	which guarantees that $f\in \C^\alpha_\kappa$ and $\|f-f_n\|_{\alpha,\kappa}\leq 2C$.
	It remains to show that $\|f-f_n\|_{\alpha',\kappa'}\to 0$. Let $\epsilon>0$.\ We choose
	$r>0$ with $\sup_{|x|>r}\frac{\kappa'(x)}{\kappa(x)}<\frac{\epsilon}{4C}$ and 
	$n_0\in\N$ with $\|f-f_n\|_{\infty,B_{\R^d}(r)}<\frac{\epsilon}{2\|\kappa'\|_{\infty}}$. 
	Then, for every $n\geq n_0$, we obtain
	\begin{equation} \label{eq:AA1}
		\| f-f_n\|_{\kappa'} 
		\leq\|\kappa'\|_{\infty}\sup_{x\in B_{\R^d}(r)}|(f-f_n)(x)|+2C\sup_{|x|>r}\frac{\kappa'(x)}{\kappa(x)}
		<\|\kappa'\|_{\infty}\frac{\epsilon}{2\|\kappa'\|_{\infty}}+2C\frac{\epsilon}{4C}=\epsilon.
	\end{equation}
	Furthermore, the estimates $\kappa'\leq c\kappa$ and 
	$\bar{\kappa}'(x,y)\leq 2\min(\kappa'(x),\kappa'(y))$ imply
	\begin{align*}
		&\sup_{x\neq y}\frac{|(f-f_n)(x)-(f-f_n)(y)|}{|x-y|^{\alpha'}}\bar{\kappa}'(x,y) \\
		&\leq\bigg(\sup_{x\neq y}\frac{|(f-f_n)(x)-(f-f_n)(y)|}{|x-y|^\alpha}\bar{\kappa}'(x,y)\bigg)^\frac{\alpha'}{\alpha} 
		\bigg(\sup_{x\neq y}|(f-f_n)(x)-(f-f_n)(y)|\bar{\kappa}'(x,y)\bigg)^{1-\frac{\alpha'}{\alpha}} \\
		&\leq\bigg(c\sup_{x\neq y}\frac{|(f-f_n)(x)-(f-f_n)(y)|}{|x-y|^\alpha}\bar{\kappa}(x,y)\bigg)^\frac{\alpha'}{\alpha} 
		\bigg(2\sup_{x\in\Rd}|(f-f_n)(x)|\kappa'(x)\bigg)^{1-\frac{\alpha'}{\alpha}} \\
		&\leq (2Cc)^\frac{\alpha'}{\alpha}(2\|f-f_n\|_{\kappa'})^{1-\frac{\alpha'}{\alpha}}.
	\end{align*}
	Combining the previous estimate with inequality~\eqref{eq:AA1} yields $\|f-f_n\|_{\alpha',\kappa'}\to 0$.
\end{proof}

\begin{theorem} \label{thm:hol_dense}
	Let $\alpha\in [0,1)$ and $\kappa\colon\Rd\to (0,\infty)$ be a bounded 
	continuous function. Then, the set $\Cci\subset\c^\alpha_\kappa$ is dense.
\end{theorem}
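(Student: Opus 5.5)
The plan is to approximate in two stages. First I truncate $f\in\c^\alpha_\kappa$ to a compactly supported function in $\c^\alpha_\kappa$. Then I mollify that truncation to obtain a function in $\Cci$. In both stages I split the H\"older quotient into small increments $|x-y|<\delta$ and large increments $|x-y|\geq\delta$. Small increments are controlled by the defining property $\lim_{\delta\to0}\sup_{|x-y|<\delta}\frac{|f(x)-f(y)|}{|x-y|^\alpha}\bar\kappa(x,y)=0$ of $\c^\alpha_\kappa$. Large increments are controlled by the sup-part using $\bar\kappa(x,y)\leq 2\min(\kappa(x),\kappa(y))$. The assumption $\alpha<1$ enters through the factor $\delta^{1-\alpha}$ coming from the Lipschitz cutoff.

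\emph{Step 1 (cutoff).} Fix $\chi\in\Cci$ with $0\leq\chi\leq 1$, $\chi=1$ on $B_{\Rd}(1)$ and Lipschitz constant $L$. Set $\chi_R(x):=\chi(x/R)$ and $g_R:=(1-\chi_R)f$.
\begin{itemize}
\item The sup-part satisfies $\|g_R\|_\kappa\leq\sup_{|x|>R}|f(x)|\kappa(x)\to0$, by the decay condition in $\c^\alpha_\kappa$.
\item For the H\"older part, write
\[ g_R(x)-g_R(y)=(1-\chi_R(x))(f(x)-f(y))+f(y)(\chi_R(y)-\chi_R(x)). \]
For $|x-y|<\delta$, the first term contributes at most the small-scale modulus of $f$ at scale $\delta$. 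Since $\bar\kappa(x,y)\leq2\kappa(y)$, the second term contributes at most $2\|f\|_\kappa L\delta^{1-\alpha}/R$.
\item For $|x-y|\geq\delta$, the quotient is bounded by $2\delta^{-\alpha}(|g_R(x)|\kappa(x)+|g_R(y)|\kappa(y))\leq4\delta^{-\alpha}\sup_{|z|>R}|f(z)|\kappa(z)$.
\end{itemize}
Given $\epsilon>0$, I first choose $\delta$ so that the modulus term is below $\epsilon$. Then I choose $R$ large so that the remaining terms are below $\epsilon$. This gives $\|f-\chi_Rf\|_{\alpha,\kappa}\to0$. Clearly $\chi_Rf\in\c^\alpha_\kappa$ has compact support.

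\emph{Step 2 (mollification).} Let $g:=\chi_Rf$ with $\supp g\subset B_{\Rd}(2R)$, and let $\varphi_\epsilon$ be a standard mollifier with $\epsilon\leq1$. Then $g_\epsilon:=g*\varphi_\epsilon\in\Cci$, and $h_\epsilon:=g-g_\epsilon$ is supported in $K:=B_{\Rd}(2R+1)$.
\begin{itemize}
\item On $K':=B_{\Rd}(2R+2)$, $\kappa$ is bounded above and below by positive constants. So for $x,y\in K'$, the weighted quotients are comparable to unweighted ones. If one point lies outside $K'$ and the other inside $K$, then $|x-y|\geq1$, so the large-increment bound applies.
\item Uniform continuity of $g$ gives $\|h_\epsilon\|_\infty\to0$.
\item For $|x-y|<\delta\leq1$, I use $|h_\epsilon(x)-h_\epsilon(y)|\leq|g(x)-g(y)|+\sup_z|g(x-z)-g(y-z)|$. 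This bounds the quotient by twice the (unweighted, localized) small-scale modulus of $g$ at scale $\delta$, which is small because $g\in\c^\alpha_\kappa$ and $\kappa$ is bounded below on $K'$.
\item For $|x-y|\geq\delta$, the quotient is at most $2\|\kappa\|_\infty\delta^{-\alpha}\|h_\epsilon\|_\infty$.
\end{itemize}
Choosing $\delta$ first and then $\epsilon$ gives $\|g-g_\epsilon\|_{\alpha,\kappa}\to0$. The case $\alpha=0$ is covered by the same estimates.

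I expect the main obstacle to be the cross term $f(y)(\chi_R(y)-\chi_R(x))$ in Step 1. Its control needs both the weight bound $\bar\kappa(x,y)\leq 2\kappa(y)$ and the restriction $\alpha<1$, which produces $\delta^{1-\alpha}/R$. The order of quantifiers (first $\delta$, then $R$) must be handled carefully.
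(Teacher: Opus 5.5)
Your proof is correct and follows essentially the same route as the paper. Both use the cutoff $(1-\chi_R)f$ with the increment split into $(1-\chi_R(x))(f(x)-f(y))$ plus a cross term, separate small and large increments at scale $\delta$, and finish with a mollification step that the paper only cites and you carry out explicitly; your bound on the cross term via $\|f\|_\kappa$ instead of the paper's $m_f(r-\delta)$ is harmless, since the factor $1/R$ already sends it to zero.

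One small correction to your commentary: that cross term also vanishes as $R\to\infty$ when $\alpha=1$, so $\alpha<1$ is not needed for the convergence in Step 1. It is needed to guarantee $\Cci\subset\c^\alpha_\kappa$ (and $\chi_R f\in\c^\alpha_\kappa$ for fixed $R$), because the difference quotients of a smooth compactly supported function are only of order $|x-y|^{1-\alpha}$.
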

\begin{proof}
	First, we show that every $f\in\c^\alpha_\kappa$ can be approximated by a sequence
	of compactly supported functions.\ To do so, let $\chi\in\Cbi$ with $0\leq\chi\leq 1$, $\chi\equiv 1$
	on $B_{\Rd}(1)$ and $\chi\equiv 0$ on $B_{\Rd}(2)^c$.\ We define $\chi_r(x):=\chi(x/r)$, 
	$f_r(x):=\chi_r(x)f(x)$ and $h_r(x):=f(x)-f_r(x)=(1-\chi_r(x))f(x)$ for all $r>0$ and $x\in\Rd$.
	It holds $f_r,h_r\in\c^\alpha_\kappa$ for all $r>0$. Since $f\in\c^\alpha_\kappa$, 
	we have
	\begin{align} 
		m_f(r) &:=\sup_{|x|>r}|f(x)|\kappa(x) \to 0 \quad\text{ as } r\to\infty, \label{eq:thm:hol_dense:proof1a} \\
		\omega_f(\delta) &:=\sup_{x\neq y,\atop |x-y|<\delta}\frac{|f(x)-f(y)|}{|x-y|^\alpha}\bar{\kappa}(x,y)
		\to 0 \quad\text{as } \delta\to 0. \label{eq:thm:hol_dense:proof1b}
	\end{align}
	It follows from equation~\eqref{eq:thm:hol_dense:proof1a} that
	$\|h_r\|_\kappa=\|f-f_r\|_\kappa\leq m_f(r)\to 0$ as $r\to\infty$.\
	Moreover, there exists $L\geq 0$ with $\|\nabla\chi_r\|_\infty\leq L/r$ for all $r>0$.\
	For every $x,y\in\Rd$ with $0<|x-y|<\delta$, we use equation~\eqref{eq:thm:hol_dense:proof1b},
	$\chi_r(x)=\chi_r(y)=1$ in case that $|y|\leq r-\delta$ and $\bar{\kappa}(x,y)\leq 2\kappa(y)$ 
	to estimate
	\begin{align*}
		|h_r(x)-h_r(y)|\bar{\kappa}(x,y) 
		&\leq (1-\chi_r(x))|f(x)-f(y)|\bar{\kappa}(x,y)+2|\chi_r(x)-\chi_r(y)|\cdot|f(y)|\kappa(y) \\
		&\leq\omega_f(\delta)|x-y|^\alpha+\frac{2L}{r}|x-y| m_f(r-\delta)
	\end{align*}
	which implies that
	\[ \sup_{x\neq y,\atop |x-y|<\delta}\frac{|h_r(x)-h_r(y)|}{|x-y|^\alpha}\bar{\kappa}(x,y) 
	\leq\omega_f(\delta)+\frac{2L}{r}\delta^{1-\alpha}m_f(r-\delta). \]
	On the other hand, for every $x,y\in\Rd$ with $|x-y|\geq\delta$, it holds
	\[ \sup_{x\neq y,\atop |x-y|\geq\delta}\frac{|h_r(x)-h_r(y)|}{|x-y|^\alpha}\bar{\kappa}(x,y)
	\leq\frac{2\sup_{x\in\Rd}|h_r(x)|\kappa(x)}{\delta^\alpha}
	\leq \frac{2m_f(r)}{\delta^\alpha}. \]
	Letting first $r\to\infty$ and then $\delta\to 0$ yields 
	$\|f-f_r\|_{\alpha,\kappa}=\|h_r\|_{\alpha,\kappa}\to 0$.
	
	Second, we show that every $f\in\c^\alpha_\kappa$ with compact support can be
	approximated by a sequence of functions in $\Cci$.\ Since $\kappa$ is bounded away
	from zero on compact subsets, the weighted H\"older norm is equivalent to the classical
	H\"older norm on $\supp(f)$, whence a standard mollification argument yields a sequence 
	$(f_n)_{n\in\N}\subset\Cci$ with $\|f-f_n\|_{\alpha,\kappa}\to 0$, see, 
	e.g.,~\cite[Proposition 0.2.1]{lunardi95}. 
\end{proof}

\section{Weighted Stone-Weierstrass theorems for Banach scales}
\label{app:stone_wstrass}

Let $(\Theta,\preceq)$ be a pre-ordered set equipped with an additional 
relation $\prec$ satisfying $\theta_1\prec\theta_2\Rightarrow\theta_1\preceq\theta_2$ 
such that, for every $k\in\N$ and $\theta_1,\ldots,\theta_k\in\Theta$, there exist 
$l_\star,l^\star\in\{1,\ldots,k\}$ with $\theta_{l_\star}\preceq\theta_l\preceq\theta_{l^\star}$ 
for all $l\in\{1,\ldots,k\}$.\ Let $X$ be a Banach space and $(X_\theta,\|\cdot\|_\theta)_{\theta\in\Theta}$ 
be a family of Banach spaces such that, for every $\theta'\preceq\theta$, 
it holds that $X_\theta\subset X_{\theta'}\subset X$ and the inclusion 
$\iota_{X_\theta,X_{\theta'}}\colon X_\theta\hookrightarrow X_{\theta'}$ is continuous, 
and for every $\theta'\prec\theta$, the set $B_{X_\theta}(r):=\{u\in X_\theta\colon\|u\|_\theta\leq r\}$
is compact in $X_{\theta'}$.\ Moreover, for a strictly increasing continuous function 
$\psi\colon [0,\infty)\to (0,\infty)$ with $\lim_{R\to\infty}\psi(R)=\infty$, 
we define the weight functions
\[ \psi_\theta\colon X_\theta\to (0,\infty),\; u \mapsto\psi(\|u\|_\theta)
\quad\text{for all } \theta\in\Theta. \]
Then, for every $\theta'\prec\theta$ and $R>0$, the pre-image 
$\psi_\theta^{-1}((0,R])=B_{X_\theta}(\psi^{-1}(R))\subset X_{\theta,\theta'}$ is compact
and therefore $\psi_\theta$ is an admissible weight on $X_{\theta,\theta'}:=(X_\theta,\|\cdot\|_{\theta'})$ 
in the sense of~\cite[Definition~2.1]{cuchiero26}. Furthermore, let $Y$ be a vector space and
$(Y_\theta,\|\cdot\|_\theta)_{\theta\in\Theta}$ be a family of Banach spaces such that, 
for every $\theta'\preceq\theta$, it holds $Y_\theta\subset Y_{\theta'}\subset Y$ and
the inclusion $\iota_{Y_\theta,Y_{\theta'}}\colon Y_\theta\hookrightarrow Y_{\theta'}$ is continuous.

\begin{remark}
	In the proof of Theorem~\ref{thm:uat_chernoff}, we consider $\Theta:=(\beta_q,\eta_q)_{q \in Q} \subset [0,\alpha] \times \mathcal{K}$, with totally ordered $(Q,\leq)$ satisfying $\beta_{q'}<\beta_q$ and $\eta_{q'}\llsim\eta_q$ for all $q' < q$. We equip $\Theta$ with
	\[ (\beta_{q'},\eta_{q'})\preceq (\beta_q,\eta_q)\, :\Leftrightarrow\,q'\leq q
	\quad\text{and}\quad
	(\beta_{q'},\eta_{q'})\prec (\beta_q,\eta_q)\, :\Leftrightarrow\,q'<q. \]
	Moreover, we set 
	$X_{\theta,\theta'}:=\C^{\beta_q,\beta_q'}_{\eta_q,\eta_q'}=(\C^{\beta_q}_{\eta_q},\|\cdot\|_{\beta_q',\eta_q'})$ 
	and $Y_{\theta'}:=\c^{\beta_q'}_{\eta_q'}$ for $\theta':=(\beta_q',\eta_q')\prec\theta:=(\beta_q,\eta_q)$.
\end{remark}

We further introduce the intersection mapping spaces
\begin{align*}
	\bigcap\nolimits_{\theta'\prec\theta}\B_\psi(X_{\theta,\theta'};Y_{\theta'}) 
	&:=\{f\colon X\to Y\colon f\vert_{X_\theta}\in\B_\psi(X_{\theta,\theta'};Y_{\theta'}) \text{ for all } \theta'\prec\theta\in\Theta\}, \\
	\bigcap\nolimits_{\theta'\prec\theta}\Cb(X_{\theta,\theta'};Y_{\theta'}) 
	&:=\{f\colon X\to Y\colon f\vert_{X_\theta}\in\Cb(X_{\theta,\theta'};Y_{\theta'}) \text{ for all } \theta'\prec\theta\in\Theta\}
\end{align*}
which satisfy 
$\bigcap_{\theta'\prec\theta}\Cb(X_{\theta,\theta'};Y_{\theta'})\subset
\bigcap_{\theta'\prec\theta}\B_\psi(X_{\theta,\theta'};Y_{\theta'})$ 
since $\C_b(X_{\theta,\theta'};Y_{\theta'})\subset\B_\psi(X_{\theta,\theta'};Y_{\theta'})$.
We equip $\bigcap_{\theta'\prec\theta}\B_\psi(X_{\theta,\theta'};Y_{\theta'})$ 
with the locally convex topology generated by the seminorms
\[\|f\|_{\B_\psi(X_{\theta,\theta'};Y_{\theta'})}
:=\sup_{u\in X_\theta}\frac{\|f(u)\|_{\theta'}}{\psi(\|u\|_\theta)}
\quad\text{for all } \theta'\prec\theta\in\Theta. \]
A zero-neighborhood basis of $\bigcap_{\theta'\prec\theta}\B_\psi(X_{\theta,\theta'};Y_{\theta'})$
is given by sets of the form
\[ \bigg\{f\in\bigcap_{\theta'\prec\theta}\B_\psi(X_{\theta,\theta'};Y_{\theta'})\colon
\max_{l=1,\ldots,k}\|f\|_{\B_{\psi_{\theta_l}}(X_{\theta_l,\theta_l'};Y_{\theta_l'})}<\epsilon\bigg\}, \]
where $\epsilon>0$, $k\in\N$ and $\theta_l'\prec\theta_l\in\Theta$, $l = 1,\ldots,k$.\
Recall that a vector space $\A$ of mappings $a\colon X\to\R$ is a \emph{subalgebra} 
if $a_1\cdot a_2\in\A$ for all $a_1,a_2\in\A$, where $(a_1\cdot a_2)(u):=a_1(u)a_2(u)$.\ 
The set~$\A$ is \emph{point separating} if, for every $u_1,u_2\in X$ with $u_1\neq u_2$, 
there is $a\in\A$ with $a(u_1)\neq a(u_2)$.\ Finally, $\A$ is called 
\emph{nowhere vanishing} if, for every $u\in X$, there is $a\in\A$ with $a(u)\neq 0$.

\begin{theorem}[$\R$-valued weighted Stone-Weierstrass] \label{thm:stone_wstrass_R}
	Let $\A\subset\bigcap_{\theta'\prec\theta}\Cb(X_{\theta,\theta'};\R)$ 
	be a point separating nowhere vanishing subalgebra. Then, the set~$\A$
	is dense in $\bigcap_{\theta'\prec\theta}\B_\psi(X_{\theta,\theta'};\R)$.
\end{theorem}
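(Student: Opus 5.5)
The plan is to reduce the statement to the single-weighted-space Stone--Weierstrass theorem of \cite{cuchiero26} (their weighted Stone--Weierstrass theorem for admissible weights, \cite[Theorem~3.8]{cuchiero26}) by exploiting the finite structure of the neighbourhood basis. Fix $T\in\bigcap_{\theta'\prec\theta}\B_\psi(X_{\theta,\theta'};\R)$, $\epsilon>0$ and finitely many pairs $\theta_l'\prec\theta_l$, $l=1,\ldots,k$. We must find one $a\in\A$ with $\|T-a\|_{\B_{\psi_{\theta_l}}(X_{\theta_l,\theta_l'};\R)}<\epsilon$ for all $l$ simultaneously.

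\textbf{Step 1: reduction to compact sets.} By \cite[Lemma~2.7]{cuchiero26}, for each $l$ there is $R_l$ with $|T(u)|/\psi(\|u\|_{\theta_l})<\epsilon/2$ outside $K_l:=B_{X_{\theta_l}}(\psi^{-1}(R_l))$, and $T|_{K_l}$ is continuous for $\|\cdot\|_{\theta_l'}$. I would then approximate $T$ on the compact set $K_l$ uniformly by algebra elements while controlling their growth outside $K_l$. The standard device in \cite{cuchiero26} is to approximate $T$ by an element whose weighted tail is small; since elements of $\A$ are bounded (they lie in $\Cb$), a bounded $a$ with $\sup|a|\le C$ satisfies $|a(u)|/\psi(\|u\|_{\theta_l})<\epsilon/2$ once $R_l$ is enlarged to make $\psi\ge 2C/\epsilon$ there. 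So it suffices to find $a\in\A$ with $|T-a|$ small on each $K_l$ and $\sup|a|$ controlled by a bound independent of the enlarged radii. I would achieve this by first truncating $T$ (replacing it with a bounded continuous extension of $T|_{K_l}$, using that $\sup_{K_l}|T|$ is finite), and then using the classical Stone--Weierstrass approximation composed with a bounded polynomial clip.

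\textbf{Step 2: a common compact set.} The main obstacle is that the $K_l$ live in different spaces with different topologies. I would use the assumed totality on finite subsets: choose $\theta_{l_\star}'$ minimal among the $\theta_l'$. Each $K_l$ is compact in $X_{\theta_l'}$ and the inclusion $X_{\theta_l'}\hookrightarrow X_{\theta_{l_\star}'}$ is continuous, so $K:=\bigcup_l K_l$ is compact in $X_{\theta'_{l_\star}}$. On $K_l$ the $\theta_l'$-topology and the $\theta_{l_\star}'$-topology coincide, being a continuous bijection from a compact to a Hausdorff space. Hence $T|_{K_l}$ is continuous in the coarser topology. The remaining worry is consistency on overlaps $K_l\cap K_{l'}$, but $T$ is one function on $X$, so it is fine. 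Moreover, $\A$ restricted to $K$ is a point separating, nowhere vanishing subalgebra of $\C(K)$ with respect to the $\theta_{l_\star}'$-topology, since each $a\in\A$ is continuous on $X_{\theta_l,\theta_{l_\star}'}$. To make $T|_K$ continuous there, I would use the homeomorphism on each $K_l$ together with Tietze's extension, or simply approximate on each compact $K_l$ and take the piece on the finite union via the pasting lemma (each $K_l$ is closed in $K$).

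\textbf{Step 3: conclusion.} Apply the classical Stone--Weierstrass theorem on the compact Hausdorff space $K$ to get $a_0\in\A$ with $\sup_K|T-a_0|<\delta$. Then compose with a polynomial $p$ approximating the clip $x\mapsto\max(-C,\min(C,x))$ on $[-\|a_0\|_\infty,\|a_0\|_\infty]$, where $C=\sup_K|T|+1$. Since $p(0)=0$ can be arranged using nowhere vanishing, $a:=p\circ a_0\in\A$ is bounded by $C+1$ globally. Choose $\delta$ small relative to $\min_l\psi(0)$, and then the radii large. Combining the compact estimate and the tail estimate from Step 1 gives the claim for all $l$ at once.
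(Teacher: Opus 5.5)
You already have the common compact set and the clip-then-polynomial device; what fails is that your clip level and your radii depend on each other.

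Your Step~2 is sound and is in fact more careful than the paper on this point. You pass to a coarsest $\theta'_{l_\star}$, use that the $\theta_l'$- and $\theta'_{l_\star}$-topologies agree on each compact $K_l$, and paste. Clipping $a_0$ and replacing the clip by a polynomial vanishing at $0$ is exactly the device the paper uses. A minor correction: $p(0)=0$ is obtained by passing to $p-p(0)$, which costs little because the clip vanishes at $0$. Nowhere vanishing does not give you this; it is what the classical Stone--Weierstrass theorem needs for an algebra without unit.

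\textbf{The gap.} You take $C=\sup_K|T|+1$ with $K$ built from the radii, and afterwards enlarge the radii so that $\psi\geq 2(C+1)/\epsilon$ outside them. The statement must cover unbounded $T$, since elements of $\B_\psi$ may grow like $o(\psi(\|u\|_\theta))$. For such $T$, $C$ grows with the radii, so this order of choices is circular. If $K$ is built from the enlarged radii, your Step~1 target is unattainable: uniform closeness to $T$ on $K$ forces $\sup|a|\geq\sup_K|T|-\delta$, which is not independent of the enlarged radii. If $K$ is built from the original $R_l$, nothing controls $|a(u)|/\psi(\|u\|_{\theta_l})$ on the shell $R_l<\psi(\|u\|_{\theta_l})\leq R_l'$. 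There you only know $|a|\leq C+1$, and $(C+1)/R_l$ can be as large as $\|T\|_{\B_\psi(X_{\theta_l,\theta_l'};\R)}$ rather than of order $\epsilon$.

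\textbf{How the paper closes the loop.} It uses two radii (with $f$ in place of your $T$).
\begin{enumerate}
\item $R_1$ comes from the tail estimate for $(1+|f|)/\psi$.
\item The clip level $b:=1+\max_l\sup_{K_{l,R_1}}|f|$ is fixed on the small sets.
\item Then $R_2\geq\max\{R_1,5b/\epsilon\}$ is chosen.
\item The uniform approximation $\sup_{K_2}|f-a|<\min\{1,\epsilon/(5M)\}$ is performed on the \emph{large} set $K_2=\bigcup_l K_{l,R_2}$.
\end{enumerate}
On the shell $K_{l,R_2}\setminus K_{l,R_1}$ this gives $|g(a(u))|\leq|a(u)|\leq 1+|f(u)|$, which is $\psi$-small because $u\notin K_{l,R_1}$. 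Beyond $R_2$ one uses $b/R_2\leq\epsilon/5$.

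\textbf{A single-radius alternative.} Set $m(R):=\max_l\sup\{|T(u)|\colon u\in X_{\theta_l},\,\psi(\|u\|_{\theta_l})\leq R\}$ and prove first that $m(R)/R\to 0$. This follows from $m(R)\leq\max\{m(R_0),\eta R\}$ once $|T|\leq\eta\psi$ beyond $R_0$, together with $m(R_0)<\infty$ by continuity on a compact set. Then $(m(R)+2)/R<\epsilon/2$ for large $R$, and your scheme with $C=m(R)+1$ goes through. One of these two ideas is needed, and neither appears in your proposal.
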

\begin{proof}
	We follow the proof of~\cite[Theorem~3.9]{cuchiero26}.\
	Let $f\in\bigcap_{\theta'\prec\theta}\B_\psi(X_{\theta,\theta'};\R)$, $\epsilon>0$, 
	$k\in\N$ and $\theta_l'\prec\theta_l\in\Theta$ for $l=1,\ldots,k$. 
	By~\cite[Lemma~2.7(i)]{cuchiero26}, there exists $R_1>0$ with
	\begin{equation} \label{eq:thm:stone_wstrass_R:proof0}
		\max_{l=1,\ldots,k}\sup_{u\in X_{\theta_l}\setminus K_{l,R_1}} 
		\frac{1+|f(u)|}{\psi(\|u\|_{\theta_l})}<\frac{\epsilon}{5},
	\end{equation} 
	where $K_{l,R_1} := \psi_{\theta_l}^{-1}((0,R_1])$ denotes the compact pre-image of $\psi_{\theta_l}(\cdot) := \psi(\Vert \cdot \Vert_{\theta_l})$. Moreover, we define $M:=(\min_{l=1,\ldots,k}\inf_{u\in X_{\theta_l}}\psi(\|u\|_{\theta_l}))^{-1}$, let $b:=1+\max_{l=1,\ldots,k}\sup_{u\in K_{l,R_1}}|f(u)|$ and choose $R_2\geq\max\{R_1,\frac{5b}{\epsilon}\}$.\ Furthermore, there exists $\theta'\in\Theta$
	with $\theta'\preceq\theta_l'$ for all $l\in\{1,\ldots,k\}$. 
	Since the set $K_2:=\bigcup_{l=1}^k K_{l,R_2}\subset X_{\theta'}$ is compact,
	we can apply the classical Stone--Weierstrass theorem~\cite{stone48} to 
	$\A\vert_{K_2}\subset\C(K_2,\|\cdot\|_{\theta'})$ to obtain $a\in\A$ with
	\begin{equation} \label{eq:thm:stone_wstrass_R:proof1}
		\sup_{u\in K_2}|f(u)-a(u)|<\min\left\{1,\frac{\epsilon}{5M}\right\}.
	\end{equation}
	We use that $g(s):=\max\{\min(s,b),-b\}$
	satisfies $g(a(u))=a(u)$ for all $u\in K_1:=\bigcup_{l=1}^k K_{l,R_1}$,
	the inequality $|a(u)|\leq 1+|f(u)|$ for all $u\in K_2$ and the 
	inequalities \eqref{eq:thm:stone_wstrass_R:proof0}--\eqref{eq:thm:stone_wstrass_R:proof1}
	to estimate
	\begin{align}
		&\max_{l=1,\ldots,k}\|f-g\circ a\|_{\B_{\psi_{\theta_l}}(X_{\theta_l,\theta_l'};\R)}
		=\max_{l=1,\ldots,k}\sup_{u\in X_{\theta_l}}\frac{|f(u)-g(a(u))|}{\psi(\|u\|_{\theta_l})}
		\nonumber \\
		&\leq\max_{l=1,\ldots,k}\sup_{u\in X_{\theta_l}\setminus K_{l,R_1}}\frac{|f(u)|}{\psi(\|u\|_{\theta_l})}
		+\max_{l=1,\ldots,k}\sup_{u\in K_{l,R_1}}\frac{|f(u)-g(a(u))|}{\psi(\|u\|_{\theta_l})} 
		\nonumber \\
		& \quad\; +\max_{l=1,\ldots,k}\sup_{u\in K_{l,R_2}\setminus K_{l,R_1}} 
		\frac{|g(a(u))|}{\psi(\|u\|_{\theta_l})} 
		+ \max_{l=1,\ldots,k}\sup_{u\in X_{\theta_l}\setminus K_{l,R_2}} 
		\frac{|g(a(u))|}{\psi(\|u\|_{\theta_l})} \nonumber \\
		&\leq\max_{l=1,\ldots,k}\sup_{u\in X_{\theta_l}\setminus K_{l,R_1}} 
		\frac{1+|f(u)|}{\psi(\|u\|_{\theta_l})} 
		+M\sup_{u\in K_{l,R_1}}|f(u)-a(u)| \nonumber \\
		&\quad\; +\max_{l=1,\ldots,k} \sup_{u\in K_{l,R_2}\setminus K_{l,R_1}} 
		\frac{1+|f(u)|}{\psi(\|u\|_{\theta_l})} 
		+ \max_{l=1,\ldots,k}\sup_{u\in X_{\theta_l}\setminus K_{l,R_2}} 
		\frac{b}{\psi(\|u\|_{\theta_l})} \nonumber \\
		&<\frac{\epsilon}{5}+M\frac{\epsilon}{5M}+\frac{\epsilon}{5} +\frac{b}{R_2}
		\leq\frac{4\epsilon}{5}. \label{eq:thm:stone_wstrass_R:proof2}
	\end{align}
	Moreover, by the Weierstrass theorem~\cite{weierstrass85}, there exists a polynomial~$p$ on $\R$ with
	\[ |g(s)-p(s)|<\frac{\epsilon}{5M} \quad\text{for all } |s|\leq 
	c:= \max_{l=1,\ldots,k}\sup_{u\in X_{\theta_l}}|a(u)|. \]
	Without loss of generality, we assume $p(0)=0$, implying
	\begin{align}
		&\max_{l=1,\ldots,k}\|g\circ a-p\circ a\|_{\B_{\psi_{\theta_l}}(X_{\theta_l,\theta_l'};\R)}
		=\max_{l=1,\ldots,k}\sup_{u\in X_{\theta_l}}\frac{|g(a(u))-p(a(u))|}{\psi(\|u\|_{\theta_l})} 
		\nonumber \\
		&\leq M\max_{l=1,\ldots,k}\sup_{u\in X_{\theta_l}}|g(a(u))-p(a(u))|
		\leq M\sup_{|s|\leq c}|g(s)-p(s)|
		<\frac{M\epsilon}{5M}=\frac{\epsilon}{5}. \label{eq:thm:stone_wstrass_R:proof3}
	\end{align}
	Combining the inequalities~\eqref{eq:thm:stone_wstrass_R:proof2}
	and~\eqref{eq:thm:stone_wstrass_R:proof3} yields
	$\max_{l=1,\ldots,k}\|f-p\circ a\|_{\B_\psi(X_{\theta_l,\theta_l'};\R)}<\epsilon$.
	Since~$\A$ is an algebra, it holds that $p\circ a\in\A$ and the claim follows.
\end{proof}

Following~\cite[Section~3.2]{cuchiero26}, Theorem~\ref{thm:stone_wstrass_R} 
can be generalized to $\A\subset\bigcap_{\theta'\prec\theta}\B_\psi(X_{\theta,\theta'};\R)$ 
consisting of unbounded mappings.\ Given a vector space~$\A$ of mappings $a\colon X\to\R$,
a vector space~$\W$ of mappings $w\colon X\to Y$ is an \emph{$\A$-submodule} if
$a\cdot w\in\W$ for all $a\in\A$ and $w\in\W$, where $(a\cdot w)(u):=a(u)w(u)$.

\begin{theorem}[Vector-valued weighted Stone--Weierstrass] \label{thm:stone_wstrass_vector}
	Let $\A\subset\bigcap_{\theta'\prec\theta}\Cb(X_{\theta,\theta'};\R)$ be a point
	separating nowhere vanishing subalgebra and let 
	$\W\subset\bigcap_{\theta'\prec\theta}\Cb(X_{\theta,\theta'};Y_{\theta'})$ be an 
	$\A$-submodule such that $\W(u):=\{w(u)\colon w\in\W\}$ is a dense subset of $Y_{\theta'}$
	for all $u\in X_\theta$ and $\theta',\theta\in\Theta$ with $\theta'\prec\theta$.\ 
	Then, the set~$\W$ is dense in
	$\bigcap_{\theta'\prec\theta}\B_\psi(X_{\theta,\theta'};Y_{\theta'})$.
\end{theorem}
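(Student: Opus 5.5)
We follow the strategy of \cite[Theorem~3.12]{cuchiero26}: reduce the vector-valued statement to Theorem~\ref{thm:stone_wstrass_R} by a partition-of-unity argument on compact sublevel sets. Fix $f\in\bigcap_{\theta'\prec\theta}\B_\psi(X_{\theta,\theta'};Y_{\theta'})$, $\epsilon>0$ and $\theta_l'\prec\theta_l$ for $l=1,\ldots,k$. Since $\Cb$-elements are dense in each $\B_\psi$ by definition of the completion, we may assume $f\vert_{X_{\theta_l}}$ is continuous and bounded. We cut off at large radius as in the scalar proof. By \cite[Lemma~2.7]{cuchiero26}, applied to $u\mapsto\|f(u)\|_{\theta_l'}$, there is $R_1$ such that the contribution of $X_{\theta_l}\setminus K_{l,R_1}$ to the weighted seminorms of $f$ is below $\epsilon/5$. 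Moreover, $K:=\bigcup_l K_{l,R_2}$ is compact in $X_{\theta'}$, where $\theta'\preceq\theta_l'$ for all $l$; it is also compact in each $X_{\theta_l,\theta_l'}$ on the relevant piece.

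\textbf{Local approximation.} For each $u\in K$ we use the hypothesis that $\W(u)$ is dense in $Y_{\theta_l'}$. Taking $\theta^\star$ with $\theta_l'\preceq\theta^\star$ for all $l$, density in $Y_{\theta^\star}$ together with continuity of the inclusions controls all $\|\cdot\|_{\theta_l'}$ simultaneously; one must check that $\theta^\star\prec$ some $\theta$ with $u\in X_\theta$, or else argue per $l$ and take maxima. This gives $w_u\in\W$ with $\max_l\|f(u)-w_u(u)\|_{\theta_l'}<\epsilon/(5M)$. By continuity of $f-w_u$, the same bound holds on an open neighbourhood $U_u$ of $u$ in $K$. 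Compactness yields finitely many $u_1,\ldots,u_N$ covering $K$, together with a continuous partition of unity $\phi_1,\ldots,\phi_N$ on $K$ subordinate to $(U_{u_j})$. Then $\sum_j\phi_j w_{u_j}$ approximates $f$ uniformly on $K$ to within $\epsilon/(5M)$.

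\textbf{Replacing $\phi_j$ by algebra elements.} By Theorem~\ref{thm:stone_wstrass_R}, more precisely the classical Stone--Weierstrass step inside its proof, we obtain $a_j\in\A$ with $\sup_K|\phi_j-a_j|<\eta$, where $\eta$ is small relative to $\max_{j,l}\sup_K\|w_{u_j}\|_{\theta_l'}$, which is finite because the $w_{u_j}$ are bounded. The element $w:=\sum_j a_j w_{u_j}$ lies in $\W$ since $\W$ is an $\A$-submodule, and it approximates $f$ on $K$ in the weighted seminorms.

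\textbf{Main obstacle: the tail.} We must control $w$ on $X_{\theta_l}\setminus K_{l,R_2}$, where $\psi$ is large but the $a_j$ are only bounded. In the scalar proof this was handled by composing with the clip $g$; that is unavailable for vector-valued functions. Instead, we first clip each $a_j$ through the truncation $s\mapsto\max\{\min(s,2),-1\}$, approximated by a polynomial $p$ with $p(0)=0$ on the bounded range of $a_j$. The resulting element $p\circ a_j\in\A$ stays uniformly bounded by a constant independent of $R_2$. Then $\|w(u)\|_{\theta_l'}\le C$ for all $u$, with $C$ independent of $R_2$, so choosing $R_2\ge 5C/\epsilon$ makes the tail term below $\epsilon/5$. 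Summing the four contributions, as in \eqref{eq:thm:stone_wstrass_R:proof2}, gives $\max_l\|f-w\|_{\B_\psi(X_{\theta_l,\theta_l'};Y_{\theta_l'})}<\epsilon$.

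The delicate point is ordering the constants. The constant $C$ must be fixed before $R_2$ and $K$ are chosen, yet the $w_{u_j}$ depend on $K$. We resolve this by choosing the clip level from $b:=1+\sup\|f\|$, so that $C$ depends only on $f$ and not on $K$, and by allowing a bounded factor $N$. Alternatively, we normalize $w_{u_j}$ so that $\|w_{u_j}(u)\|\le b+1$ on $U_{u_j}$, and cut off each term $a_jw_{u_j}$ using a scalar factor from $\A$ that vanishes where $\|w_{u_j}\|$ is large.
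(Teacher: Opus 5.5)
\textbf{There is a genuine gap at the tail, which is the step you yourself call the main obstacle.} Your skeleton is sound: pointwise approximation at points of a compact sublevel set, then a partition of unity, then replacing the partition functions by elements of $\A$. But the tail argument does not work as written.

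Your approximant is $w=\sum_{j=1}^N(p\circ a_j)\,w_{u_j}$, where each $a_j$ approximates $\phi_j$ only uniformly on $K=\bigcup_l K_{l,R_2}$. Off $K$, the clipped coefficients $p\circ a_j$ are bounded (by about $2$) but not small. So the only tail bound you have is $\|w(u)\|_{\theta_l'}\lesssim 2\sum_{j=1}^N\sup_v\|w_{u_j}(v)\|_{\theta_l'}$. Both $N$ and the $w_{u_j}$ come from a cover of $K$, so they depend on $R_2$. Moreover, each $w_{u_j}$ is controlled only near $u_j$, and far from $U_{u_j}$ its norm can be arbitrarily large. Hence the requirement $R_2\ge 5C/\epsilon$ is circular.

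None of your remedies breaks the circle:
\begin{enumerate}
\item The clip level $b$ acts on the scalar coefficients and says nothing about $\sup\|w_{u_j}\|$.
\item $N$ is not bounded independently of $R_2$.
\item An element of $\A$ that is small where $\|w_{u_j}\|$ is large cannot come from the classical Stone--Weierstrass theorem on a compact set, because that region need not be compact. Even a per-term cutoff at level $b+1$ still leaves a bound of order $N(b+2)$.
\item Building the cover on $K_1$ instead fixes $C$. But then on $X_{\theta_l}\setminus K_{l,R_1}$ you only know $\psi>R_1$, and $C/R_1$ need not be small.
\end{enumerate}

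\textbf{The missing idea (this is how the paper argues).} Make the local approximations valid globally in the weighted sense, and use the full weighted scalar Theorem~\ref{thm:stone_wstrass_R} rather than its compact-set core.
\begin{enumerate}
\item Work with $K_1=\bigcup_l K_{l,R_1}$; one radius suffices.
\item For $u\in K_1$, let $F_{u,l}$ be the set of $v\in X_{\theta_l}$ with $\|f(v)-w_u(v)\|_{\theta_l'}\ge\frac{\epsilon}{4}\psi(\|v\|_{\theta_l})$. This set is compact in $X_{\theta'}$, because the ratio has vanishing tails ($f$ lies in $\B_\psi$ and $w_u$ is bounded).
\item Hence $U_u:=X_{\theta'}\setminus\bigcup_l F_{u,l}$ is an open, possibly unbounded, neighbourhood of $u$ on which the weighted error is below $\epsilon/4$ everywhere.
\item Extend a partition of unity subordinate to $U_{u_1},\ldots,U_{u_N}$ to all of $X_{\theta'}$. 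Add one function $\bar g_0$ vanishing on $K_1$; off $K_1$ one already has $\|f(u)\|_{\theta_l'}/\psi(\|u\|_{\theta_l})<\epsilon/4$. This gives $\max_l\|f-\sum_n\bar g_nw_n\|_{\B_\psi(X_{\theta_l,\theta_l'};Y_{\theta_l'})}<\epsilon/2$ on the whole space.
\item Only now apply Theorem~\ref{thm:stone_wstrass_R} to the bounded continuous functions $\bar g_n$, with tolerance $\epsilon/(2c_{w_n}N)$.
\end{enumerate}
At that point $N$ and the bounds $c_{w_n}$ are already fixed, and the weighted seminorm of $\bar g_n-a_n$ controls the tail, so nothing is circular.

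\textbf{Two smaller points.}
\begin{enumerate}
\item Density of $\Cb$ in each $\B_\psi$ separately does not let you assume $f$ is continuous and bounded. You need one map that works for all $k$ seminorms at once, and $\sup\|f\|$ may be infinite. The paper only uses continuity of $f$ on the compact sublevel sets.
\item The issue you leave open about $\theta^\star$ is settled as follows. For each $u$, take the largest $\theta_{l_u}'$ among those $l$ with $u\in X_{\theta_l}$. Apply density of $\W(u)$ there, then pass to the other $\theta_l'$ via the continuous inclusions.
\end{enumerate}
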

\begin{proof}
	Let $f\in\bigcap_{\theta'\prec\theta}\B_\psi(X_{\theta,\theta'};Y_{\theta'})$, 
	$\epsilon>0$, $k\in\N$ and $\theta_l'\prec\theta_l\in\Theta$ for $l=1,\ldots,k$.\
	It follows from~\cite[Lemma~2.7(i)]{cuchiero26} that there exists $R_1>0$ with
	\begin{equation} \label{eq:thm:stone_wstrass_vector:proof1}
		\max_{l=1,\ldots,k}\sup_{u\in X_{\theta_l}\setminus K_{l,R_1}} 
		\frac{1+\|f(u)\|_{\theta_l'}}{\psi(\|u\|_{\theta_l})}<\frac{\epsilon}{4},
	\end{equation}
	where $K_{l,R_1}:=\psi_{\theta_l}^{-1}((0,R_1])$.\
	Let $M:=(\min_{l=1,\ldots,k}\inf_{u\in X_{\theta_l}}\psi(\|u\|_{\theta_l}))^{-1}$.\
	We choose $\theta'\in\Theta$ with $\theta'\preceq\theta_l'$ for all $l\in\{1,\ldots,k\}$
	and observe that $K_1:=\bigcup_{l=1}^k K_{l,R_1}$ is compact in $X_{\theta'}$.\ Moreover,
	for every $u\in K_1$, we define $L_u:=\{l\in\{1,\ldots,k\}\colon u\in X_{\theta_l}\}$
	and choose $l_u\in L_u$ with $\theta_l'\preceq\theta_{l_u}'$ for all $l\in L_u$.\
	Since $\W(u)$ is a dense subset of $Y_{\theta_{l_u}'}$, there exists $w_u\in\W$ with
	\[ \|f(u)-w_u(u)\|_{\theta_{l_u}'}<\frac{\epsilon}{4c_u M}, \]
	where $c_u :=1+\max_{l\in L_u}\|\iota_{\theta_{l_u}',\theta_l'}\|_{L(Y_{\theta_{l_u}'};Y_{\theta_l'})}$.
	Consequently, for every $l\in L_u$, it holds that
	\[ \|f(u)-w_u(u)\|_{\theta_l'}
	\leq\|\iota_{\theta_{l_u}',\theta_l'}\|_{L(Y_{\theta_{l_u}'};Y_{\theta_l'})}\|f(u)-w_u(u)\|_{\theta_{l_u}'}
	<c_u\frac{\epsilon}{4c_uM}=\frac{\epsilon}{4M} \]
	and therefore
	\[ \frac{\|f(u)-w_u(u)\|_{\theta_l'}}{\psi(\|u\|_{\theta_l})}<M\frac{\epsilon}{4M}=\frac{\epsilon}{4}. \]
	In addition, for every $u\in K_1$ and $l\in\{1,\ldots,k\}$, the set
	\[ F_{u,l}:=\left\{v\in X_{\theta_l}\colon
	\frac{\|f(v)-w_u(v)\|_{\theta_l'}}{\psi(\|v\|_{\theta_l})}\geq\frac{\epsilon}{4}\right\} \]
	is compact in $X_{\theta'}$.\ Indeed, by applying~\cite[Lemma~2.7(i)]{cuchiero26} to~$f$ 
	and using that $w_u$ is uniformly bounded, we obtain 
	\[ \lim_{R\to\infty}\sup_{v\in X_{\theta_l}\setminus K_{l,R}} 
	\frac{\|f(v)-w_u(v)\|_{\theta_l'}}{\psi(\|v\|_{\theta_l})}=0 \]
	which implies that $F_{u,l}\subset K_{l,R}$ for sufficiently large $R>0$.\ Since $K_{l,R}$
	is compact and $F_{u,l}$ is closed, the set $F_{u,l}$ is compact.\ In particular,
	it follows from $u\notin F_{u,l}$ that $U_u:=X_{\theta'}\setminus\bigcup_{l=1}^k F_{u,l}$
	is an open neighborhood of~$u$ in~$X_{\theta'}$ satisfying
	\begin{equation} \label{eq:thm:stone_wstrass_vector:proof2}
		\frac{\|f(v)-w_u(v)\|_{\theta_l'}}{\psi(\|v\|_{\theta_l})}<\frac{\epsilon}{4}
		\quad\text{for all } v\in U_u\cap X_{\theta_l}.
	\end{equation}
	Since $(U_u)_{u\in K_1}$ is an open cover of the compact set $K_1$, there exist
	$u_1,\ldots,u_N \in K_1$ satisfying $K_1\subset\bigcup_{n=1}^N U_{u_n}$ and a 
	partition of unity $(g_n)_{n=1,\ldots,N}\subset\C(K_1)$ subordinate to 
	$(U_{u_n})_{n=1,\ldots,N}$ which means that $0\leq g_n\leq 1$, $\sum_{n=1}^N g_n=1$
	and $\supp(g_n)\subset U_{u_n}$.\ By using again that $K_1$ is compact and 
	$\supp(g_n)\subset U_{u_n}$, we can extend each $g_n\in\C(K_1)$ to some 
	$\tilde{g}_n\in\C(X_{\theta'})$ with $0\leq\tilde{g}_n\leq 1$, $\tilde{g}_n\vert_{K_1}=g_n$
	and $\supp(\tilde{g}_n)\subset U_{u_n}$.\ By further choosing $\tilde{g}_0\in\C(X_{\theta'})$
	with $0\leq\tilde{g}_0\leq 1$, $\tilde{g}_0\vert_{K_1}=0$ and $\tilde{g}_0\vert_{X_{\theta'}\setminus K_1}>0$,
	the normalized functions $\bar{g}_n:=\frac{\tilde{g}_n}{G}$ with $G:=\sum_{n=0}^N\tilde{g}_n$
	for $n=0,\ldots,N$ form a partition of unity on $X_{\theta'}$ satisfying $\bar{g}_0\vert_{K_1}=0$,
	$\bar{g}_n\vert_{K_1}=g_n$ and $\supp(\bar{g}_n)\subset U_{u_n}$ for $n=1,\ldots,N$.
	Let $w_n:=w_{u_n}$ for $n=1,\ldots,N$.\ Since $\bar{g}_0(u)>0$ implies $u\notin K_1$
	and therefore $u\in X_{\theta_l}\setminus K_{l,R_1}$ and $\bar{g}_n(u)>0$ implies $u\in U_{u_n}$,
	it follows from inequality~\eqref{eq:thm:stone_wstrass_vector:proof1} and 
	inequality~\eqref{eq:thm:stone_wstrass_vector:proof2} that
	\begin{align}
		& \max_{l=1,\ldots,k}\sup_{u\in X_{\theta_l}}
		\frac{\big\|f(u)-\sum_{n=1}^N\bar{g}_n(u)w_n(u)\big\|_{\theta_l'}}{\psi(\|u \|_{\theta_l})}
		\nonumber \\
		& \leq\max_{l=1,\ldots,k}\sup_{u\in X_{\theta_l}}\bar{g}_0(u) 
		\frac{\|f(u)\|_{\theta_l'}}{\psi(\|u\|_{\theta_l})} 
		+\max_{l=1,\ldots,k}\sup_{u\in X_{\theta_l}}\sum_{n=1}^N\bar{g}_n(u) 
		\frac{\|f(u)-w_n(u)\|_{\theta_l'}}{\psi(\|u\|_{\theta_l})} \nonumber \\
		&<\frac{\epsilon}{4}+\frac{\epsilon}{4}=\frac{\epsilon}{2} \label{eq:thm:stone_wstrass_vector:proof3}
	\end{align}
	Moreover, for every $n\in\{1,\ldots,N\}$, we apply Theorem~\ref{thm:stone_wstrass_R}
	to obtain $a_n\in\A$ with
	\begin{equation} \label{eq:thm:stone_wstrass_vector:proof4}
		\max_{l=1,\ldots,k}\|\bar{g}_n-a_n\|_{\B_\psi(X_{\theta_l,\theta_l'};\R)}<\frac{\epsilon}{2c_{w_n}N},
	\end{equation}
	where $c_{w_n}:=1+\max_{l=1,\ldots,k}\sup_{u\in X_{\theta_l}}\|w_n(u)\|_{Y_{\theta_l'}}$.
	Finally, the inequalities~\eqref{eq:thm:stone_wstrass_vector:proof3} 
	and~\eqref{eq:thm:stone_wstrass_vector:proof4} imply 
	\begin{align*}
		&\max_{l=1,\ldots,k}\bigg\|f-\sum_{n=1}^N a_n w_n\bigg\|_{\B_\psi(X_{\theta_l,\theta_l'};Y_{\theta_l'})} \\
		&\leq\max_{l=1,\ldots,k}\bigg\|f-\sum_{n=1}^N\bar{g}_n w_n\bigg\|_{\B_\psi(X_{\theta_l,\theta_l'};Y_{\theta_l'})} 
		+\sum_{n=1}^N\max_{l=1,\ldots,k}\|\bar{g}_n-a_n\|_{\B_\psi(X_{\theta_l,\theta_l'};\R)}
		\sup_{u\in X_{\theta_l}}\|w_n(u)\|_{Y_{\theta_l'}} \\
		&<\frac{\epsilon}{2}+\sum_{n=1}^N\frac{\epsilon}{2c_{w_n}N}c_{w_n}=\epsilon. 
	\end{align*}
	Since $\W$ is an $\A$-submodule, it holds that $\sum_{n=1}^N a_n\cdot w_n\in\W$ and the claim follows.
\end{proof}

Following~\cite[Section~3.3]{cuchiero26}, Theorem~\ref{thm:stone_wstrass_vector}
can be generalized to $\W\subset\bigcap_{\theta'\prec\theta}\B_\psi(X_{\theta,\theta'};Y_{\theta'})$ 
and $\A\subset\bigcap_{\theta'\prec\theta}\B_\psi(X_{\theta,\theta'};\R)$ 
consisting of unbounded mappings.

\section{Proof of Lemmas~\ref{lem:pde}--\ref{lem:wus_nisio}}
\label{app:ass}

\begin{lemma} \label{lem:diff_bds}
	Let $q\geq 2$, $\mu\in\Rd$, $\sigma\in\R^{d\times m}$, $\Sigma:=\sigma\sigma^\top\in\mathbb{S}^d_+$
	and $(W_t)_{t\geq 0}$ be a $m$-dimensional Brownian motion. Define 
	$X_t:=\mu t+\sigma W_t$ for all $t\geq 0$ and 
	$\omega_q:=q|\mu|+\frac{q}{2}\trace(\Sigma)+\frac{q(q-2)}{2}|\Sigma|$. Then,
	\[ \E[1+|x+X_t|^q]\leq e^{\omega_q t}(1+|x|^q) 
	\quad\text{for all } t\geq 0 \text{ and } x\in\Rd. \]
\end{lemma}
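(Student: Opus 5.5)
We will prove the bound with Itô's formula applied to $\varphi(y):=1+|y|^q$ along $Y_s:=x+X_s$, followed by Gronwall's lemma. Since $q\geq 2$, $\varphi\in\C^2(\Rd)$, with
\[ \nabla\varphi(y)=q|y|^{q-2}y
\quad\text{and}\quad
\nabla^2\varphi(y)=q|y|^{q-2}I_d+q(q-2)|y|^{q-4}yy^\top. \]
For $q=2$ the second term vanishes, and for $q\in(2,4)$ the Hessian is still continuous at $0$ with value $0$.

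Itô's formula gives
\[ \varphi(Y_t)=\varphi(x)+\int_0^t\Big(\nabla\varphi(Y_s)^\top\mu+\tfrac12\trace\big(\Sigma\nabla^2\varphi(Y_s)\big)\Big)\,\d s+\int_0^t\nabla\varphi(Y_s)^\top\sigma\,\d W_s. \]
The stochastic integral is a true martingale, because $Y_s$ is Gaussian and so has moments of all orders, locally uniformly in $s$. Taking expectations therefore removes it. Alternatively, one can localize with stopping times and pass to the limit via Fatou and monotone convergence.

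Next we bound the drift pointwise by $\omega_q\varphi(y)$:
\[ |\nabla\varphi(y)^\top\mu|\leq q|\mu||y|^{q-1}, \qquad
\tfrac12\trace\big(\Sigma\nabla^2\varphi(y)\big)=\tfrac q2|y|^{q-2}\trace(\Sigma)+\tfrac{q(q-2)}2|y|^{q-4}y^\top\Sigma y\leq \Big(\tfrac q2\trace(\Sigma)+\tfrac{q(q-2)}2|\Sigma|\Big)|y|^{q-2}, \]
where $|\Sigma|$ is the operator norm (or any norm dominating it). The only real point is to absorb the lower powers: $|y|^{q-1}\leq 1+|y|^q$ and $|y|^{q-2}\leq 1+|y|^q$, which follow from considering $|y|\leq1$ and $|y|>1$ separately. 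Hence the drift is at most $\omega_q(1+|y|^q)$.

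Setting $m(t):=\E[\varphi(Y_t)]$, which is finite and continuous in $t$ by Gaussianity, we obtain
\[ m(t)\leq \varphi(x)+\omega_q\int_0^t m(s)\,\d s, \]
and Gronwall's lemma yields $m(t)\leq e^{\omega_q t}(1+|x|^q)$. The step needing the most care is the justification of the martingale property, or of the localization, together with the $\C^2$-regularity of $|y|^q$ at the origin for $q\in[2,4)$. Both are handled by the remarks above.
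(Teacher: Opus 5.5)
Your proposal is correct and follows essentially the same route as the paper: Itô's formula for $1+|y|^q$ along $x+X_s$, the pointwise bound of the generator by $\omega_q(1+|y|^q)$ via $|y|^{q-1},|y|^{q-2}\leq 1+|y|^q$, and then Gronwall's lemma. You also justify two points the paper leaves implicit: the $C^2$-regularity of $|y|^q$ at the origin for $q\in[2,4)$, and why the stochastic integral has zero expectation (true martingale, or localization).
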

\begin{proof}
	Define $V_q(x):=1+|x|^q$. The generator of the It\^o process $(X_t)_{t\geq0}$ is given by
	\[ (\L f)(x):=\mu^\top\nabla f(x)+\frac{1}{2}\trace\big(\Sigma\nabla^2 f(x)\big). \]
	Since $\nabla V_q(x)=q|x|^{q-2}x$ and $\nabla^2 V_q(x)=q|x|^{q-2}I_d+q(q-2)|x|^{q-4}xx^\top$ 
	for all $x\neq 0$, which can be continuously extended to zero, we can use 
	$|x|^{q-1}\leq 1+|x|^q$ and $|x|^{q-2}\leq 1+|x|^q$ to obtain
	\begin{align*}
		(\L V_q)(x) 
		&=q|x|^{q-2}\mu^\top x+\frac{q}{2}|x|^{q-2}\trace(\Sigma)+\frac{q(q-2)}{2}|x|^{q-4}x^\top\Sigma x \\
		&\leq q|\mu|\cdot |x|^{q-1}+\frac{q}{2}\big(\trace(\Sigma)+(q-2)|\Sigma|\big)|x|^{q-2}
		\leq\omega_q V_q(x).
	\end{align*}
	For every $t\geq 0$ and $x\in\Rd$, it follows from It\^o's formula that
	\[ \E[V_q(x+X_t)]=V_q(x)+\int_0^t\E[\L V_q(x+X_s)]\,\d s 
	\leq V_q(x)+\omega_q\int_0^t\E[V_q(x+X_s)]\,\d s. \]
	Finally, Gronwall's lemma implies $\E[V_q(x+X_t)]\leq e^{\omega_qt}V_q(x)$
	for all $t\geq 0$ and $x\in\Rd$.
\end{proof}

\subsection{Proof of Lemma~\ref{lem:pde}}
\label{app:pde}

Regarding the verification of the Assumptions~\ref{ass:chernoff} and~\ref{ass:nisio}
as well as the extension $I_n\colon\C_{\kappa_0}\to\C_{\kappa_0}$, we refer
to~\cite[Theorem~3.6]{blessing23}, whereas Assumption~\ref{ass:cn_uat}(i) is clearly satisfied.\
Furthermore, by Lemma~\ref{lem:diff_bds}, there exists a constant $\omega\geq 0$ with
\begin{align*}
	\|I_{n,\lambda}f\|_{\alpha',\kappa'} 
	&\leq\max\left\{\sup_{x\in\Rd}\E[|f(x+Z_{n,\lambda})|]\kappa'(x),
	\sup_{x\neq y}\frac{\E[|f(x+Z_{n,\lambda})-f(y+Z_{n,\lambda})|]}{|x-y|^{\alpha'}}\bar{\kappa}'(x,y)\right\} \\
	&\leq\|f\|_{\alpha',\kappa'}\max\left\{\sup_{x\in\Rd}\E\left[\frac{\kappa'(x)}{\kappa'(x+Z_{n,\lambda})}\right], 
	\sup_{x\neq y}\E\left[\frac{\bar{\kappa}'(x,y)}{\bar{\kappa}'(x+Z_{n,\lambda},y+Z_{n,\lambda})}\right]\right\} \\
	&\leq e^{\omega h_n}\|f\|_{\alpha',\kappa'}
\end{align*}
for all $n\in\N$, $\lambda\in\Lambda$, $\alpha'\in [0,\alpha]$, $\kappa'\in\K$ 
and $f\in\C^{\alpha'}_{\kappa'}$, where $(I_{n,\lambda}f)(x):=\E[f(x+Z_{n,\lambda})]$
and $Z_{n,\lambda}:=\lambda h_n+W_{h_n}$.\ Indeed, for $\kappa'(x)=(1+|x|^{q'})^{-1}$,
it holds $\kappa'(x)^{-1}=1+|x|^{q'}$ and 
$\bar{\kappa}'(x,y)^{-1}=\frac{\kappa'(x)^{-1}+\kappa'(y)^{-1}}{2}=\frac{2+|x|^{q'}+|y|^{q'}}{2}$.\ 
Using the linearity of $I_{n,\lambda}$ and taking the supremum over $\lambda\in\Lambda$,
this shows that the Assumptions~\ref{ass:cn_uat}(ii)--(iii) and~\ref{ass:en}(i) are valid.\
Denoting by $c\geq 0$ the Lipschitz constant of $H^*|_\Lambda$, for every $n\in\N$, 
$\lambda_1,\lambda_2\in\Lambda$ and $f\in\C^\alpha_\kappa=\Lipb$, it holds
\begin{align*}
	&\|(I_{n,\lambda_1}f-\eta(\lambda_1))
	-((I_{n,\lambda_2} f)-\eta(\lambda_2)h_n)\|_\kappa \\
	&\leq\sup_{x\in\Rd}\E[|f(x+Z_{n,\lambda_1})-f(x+Z_{n,\lambda_2})|]+|H^*(\lambda_1)-H^*(\lambda_2)|h_n
	\\
	&\leq (\Vert f \Vert_{\alpha,\kappa}+c)|\lambda_1-\lambda_2|h_n.
\end{align*}
Hence, Assumption~\ref{ass:en_rate} is valid with $L_r:=(c+r)\sup_{n\in\N}h_n$, $d_r:=|\cdot|$ and 
$\beta_r:=1$ for all $r\geq 0$.

\subsection{Proof of Lemma~\ref{lem:soc}}
\label{app:soc}

Regarding the verification of the Assumptions~\ref{ass:chernoff} and~\ref{ass:nisio}
as well as the extension $I_n\colon\C_{\kappa_0}\to\C_{\kappa_0}$, we refer
to~\cite[Theorem~6.2]{denk25}, whereas Assumption~\ref{ass:cn_uat}(i) is clearly satisfied.
Furthermore, by Lemma~\ref{lem:diff_bds}, there exists a constant $\omega\geq 0$ with
\begin{align*}
	&\|I_{n,(\mu,\sigma)}f\|_{\alpha',\kappa'} \\
	&\leq\max\left\{\sup_{x\in\Rd}\E[|f(x+Z_{n,(\mu,\sigma)})|]\kappa'(x), 
	\sup_{x\neq y}\frac{\E[|f(x+Z_{n,(\mu,\sigma)})-f(y+Z_{n,(\mu,\sigma)})|]}
	{|x-y|^{\alpha'}}\bar{\kappa}'(x,y)\right\} \\
	&\leq\|f\|_{\alpha',\kappa'}\max\left\{\sup_{x\in\Rd}
	\E\left[\frac{\kappa'(x)}{\kappa'(x+Z_{n,(\mu,\sigma)})}\right],
	\sup_{x\neq y}\E\left[\frac{\bar{\kappa}'(x,y)}
	{\bar{\kappa}'(x+Z_{n,(\mu,\sigma)},y+Z_{n,(\mu,\sigma)})}\right] \right\} \\
	&\leq e^{\omega h_n} \Vert f \Vert_{\alpha',\kappa'}
\end{align*}
for all $n\in\N$, $(\mu,\sigma)\in\Xi\times\Sigma$, $\alpha'\in [0,\alpha]$, $\kappa'\in\K$ 
and $f\in\C^{\alpha'}_{\kappa'}$, where $Z_{n,(\mu,\sigma)}:=\mu h_n+\sigma W_{h_n}$
and $(I_{n,(\mu,\sigma)}f)(x):=\E[f(x+Z_{n,(\mu,\sigma)})]$.\
Using the linearity of $I_{n,(\mu,\sigma)}$ and taking the supremum over 
$(\mu,\sigma)\in\Xi\times\Sigma$ yields that the Assumptions~\ref{ass:cn_uat}(ii)--(iii) 
and~\ref{ass:en}(i) are satisfied.\ Finally, for every $n\in\N$, 
$(\mu_1,\sigma_1), (\mu_2,\sigma_2)\in\Xi\times\Sigma$ and $f\in\C^\alpha_\kappa$,
it follows from $\frac{1+|x+z|^q}{1+|x|^q}\leq 2^{q-1}(1+|z|^q)$, Lemma~\ref{lem:diff_bds},
H\"older's inequality and Jensen's inequality that 
\begin{align*}
	&\|I_{n,(\mu_1,\sigma_1)}f-I_{n,(\mu_2,\sigma_2)}f\|_\kappa 
	\leq\sup_{x\in\Rd}\E\big[|f(x+Z_{n,(\mu_1,\sigma_1)})-f(x+Z_{n,(\mu_2,\sigma_2)})|\big]\kappa(x) \\
	&\leq\sup_{x\in\Rd}\|f\|_{\alpha,\kappa}\E\left[\frac{|Z_{n,(\mu_1,\sigma_1)}-Z_{n,(\mu_2,\sigma_2)}|^\alpha}
	{\bar{\kappa}(x+Z_{n,(\mu_1,\sigma_1)},x+Z_{n,(\mu_2,\sigma_2)})}\right]\kappa(x) \\
	&\leq 2^{q-2}\|f\|_{\alpha,\kappa}\E\big[|Z_{n,(\mu_1,\sigma_1)}-Z_{n,(\mu_2,\sigma_2)}|^\alpha
	(2+|Z_{n,(\mu_1,\sigma_1)}|^q+|Z_{n,(\mu_2,\sigma_2)}|^q)\big] \\
	&\leq 2^{q-2}\|f\|_{\alpha,\kappa}\E\big[|Z_{n,(\mu_1,\sigma_1)}-Z_{n,(\mu_2,\sigma_2)}|^{2\alpha}\big]^\frac{1}{2}
	\E\big[(2+|Z_{n,(\mu_1,\sigma_1)}|^q+|Z_{n,(\mu_2,\sigma_2)}|^q)^2\big]^\frac{1}{2} \\
	&\leq 2^{q-2}\|f\|_{\alpha,\kappa}\E\big[|Z_{n,(\mu_1,\sigma_1)}-Z_{n,(\mu_2,\sigma_2)}|^2\big]^\frac{\alpha}{2} \E\big[(2+|Z_{n,(\mu_1,\sigma_1)}|^q+|Z_{n,(\mu_2,\sigma_2)}|^q)^2\big]^\frac{1}{2}\\
	&\leq C2^{q-2}\|f\|_{\alpha,\kappa}d((\mu_1,\sigma_1),(\mu_2,\sigma_2))^\alpha,
\end{align*}
where $d((\mu_1,\sigma_1),(\mu_2,\sigma_2)):=\sqrt{|\mu_1-\mu_2|^2+|\sigma_1-\sigma_2|^2}$ and
\[ C:=\sup_{n\in\N}\sup_{(\mu_1,\sigma_1),(\mu_2,\sigma_2)\in\Xi\times\Sigma}
\E\big[(2+|Z_{n,(\mu_1,\sigma_1)}|^q+|Z_{n,(\mu_2,\sigma_2)}|^q)^2\big]^\frac{1}{2}(h_n^2+h_n)^\frac{\alpha}{2}. \]
Hence, Assumption~\ref{ass:en_rate} is valid with $L_r:=2^{q-2}Cr$, 
$d_r:=d$ and $\beta_r:=\alpha$ for all $r\geq 0$.

\subsection{Proof of Lemma~\ref{lem:wus}}
\label{app:wus}

Regarding the verification of the Assumptions~\ref{ass:nisio}(i)--(vii), we 
refer to~\cite{fuhrmann23}. Since the function $\eta\colon\R_+\to [0,\infty]$ 
is non-decreasing, it holds
\[ I_n f=\sup_{(\nu,a)\in\Lambda}\big(I_{n,(\nu,a)}f-\eta_n(\nu,a)h_n\big) \]
for all $n\in\N$ and $f\in\Cb$, where $\Lambda:=\cP_p(\Rd)\times\R_+$ and 
\[ \eta_n(\nu,a):=\begin{cases}
	\eta(a), & \text{if } \mathcal{W}_p(\varpi_{h_n},\nu) \leq ah_n, \\
	\infty, & \text{otherwise.}
\end{cases} \]
Using $\alpha=1$ and $\kappa\equiv 1$, it holds for every $n\in\N$, 
$(\nu,a)\in\Lambda$ and $f\in\Lipb$ that
\begin{align*}
	\|I_{n,(\nu,a)}f\|_{\alpha,\kappa} 
	&\leq\max\left\{\sup_{x\in\Rd}\int_{\Rd}|f(x+z)|\,\nu(\d z), 
	\sup_{x\neq y}\frac{\int_{\Rd}|f(x+z)-f(y+z)|\,\nu(\d z)}{|x-y|}\right\} \\
	&\leq\max\left\{\|f\|_\infty,\|f\|_{\alpha,\kappa}\sup_{x\neq y}\frac{|(x+z)-(y+z)|}{|x-y|}\right\}
	=\|f\|_{\alpha,\kappa}.
\end{align*}
This shows that Assumption~\ref{ass:en}(i) is valid.\ Moreover, for every $r\geq 0$, 
there exists $R_r\geq 0$ with
\begin{align*}
	(I_n f)(x) &=\sup_{\{\nu\colon\W_p(\varpi_{h_n},\nu)\leq R_rh_n\}}
	\left(\int_{\Rd}f(x+z)\,\nu(\d z)-\eta\left(\frac{\W_p(\varpi_{h_n},\nu)}{h_n}\right)h_n\right) \\
	&=\sup_{(\nu,a)\in\Lambda_r}\big((I_{n,(\nu,a)}f)(x)-\eta_n(\nu,a)h_n\big)
\end{align*}
for all $n\in\N$, $f\in\Lipb(r)$ and $x\in\Rd$, see~\cite[Lemma~3.3]{fuhrmann23}, where
\[ \Lambda_r:=\{\nu\in\cP_p(\Rd)\colon\W_p(\nu,\delta_0)\leq C_r\}\times [0,R_r]
\quad\text{with}\quad C_r:=\sup_{n\in\N}\,(R_rh_n+\W_p(\varpi_{h_n},\delta_0)). \]
Equation~\eqref{eq:wus:moments:bound}
guarantees that $C_r<\infty$ and $p>1$ implies that $\Lambda_r$ is compact w.r.t.\ the metric 
\[ d((\nu_1,a_1),(\nu_2,a_2)):=\W_1(\nu_1,\nu_2)+|a_1-a_2|. \]
We use the Kantorovich--Rubinstein inequality, that $\eta\colon [0,R_r]\cap\{\eta<\infty\}\to\R_+$ 
is Lipschitz continuous with some constant $c_r\geq 0$ and $\bar{h}:=\sup_{n\in\N}h_n<\infty$  
to estimate
\begin{align*}
	&\big\|\big(I_{n,(\nu_1,a_1)}f-\eta_n(\nu_1,a_1)h_n\big) 
	-\big(I_{n,(\nu_2,a_2)}f-\eta_n(\nu_2,a_2)h_n\big)\big\|_\infty \\
	& \leq\sup_{x\in\Rd}\left|\int_{\Rd}f(x+z)\,\nu_1(\d z)-\int_{\Rd}f(x+z)\,\nu_2(\d z)\right| 
	+|\eta(a_1)-\eta(a_2)|h_n \\
	&\leq r\W_1(\nu_1,\nu_2)+\bar{h}c_r|a_1-a_2| \\
	&\leq\max\{r,\bar{h}c_r\} d((\nu_1,a_1),(\nu_2,a_2))
\end{align*}
for all $n\in\N$, $r\geq 0$, $f\in\Lipb(r)$ and 
$(\nu_1,a_1),(\nu_2,a_2)\in\Lambda_{r,n}:=\Lambda_r\cap\{\eta_n<\infty\}$.\
This shows that Assumption~\ref{ass:en_rate} is valid with $L_r:=\max\{r,\bar{h}c_r\}$,
$d_r:=d$ and $\beta_r:=1$ for all $r\geq 0$.

\subsection{Proof of Lemma~\ref{lem:wus_nisio}}
\label{app:wus_nisio}

Regarding the verification of Assumption~\ref{ass:nisio}, we refer to~\cite{fuhrmann23}, and
the verification of Assumption~\ref{ass:en}(i) is the same as in the proof of
Lemma~\ref{lem:wus}. Furthermore, we use $\eta(0)=0$ to obtain
\begin{align*}
	&\big((J_{n,\lambda}f)(x)-\eta(|\lambda|)h_n\big)-\big((J_{n,0}f)(x)-\eta(0)h_n\big) \\
	&=\int_{\Rd}f(x+\lambda h_n+y)-f(x+y)\,\varpi_{h_n}(\d y)-\eta(|\lambda |)h_n
	\leq\big(r|\lambda|-\eta(|\lambda|)\big)h_n
\end{align*}
for all $n\in\N$, $r\geq 0$, $f\in\Lipb(r)$ and $\lambda,x\in\Rd$, where 
\[ (J_{n,\lambda}f)(x):=\int_{\Rd}f(x+\lambda h_n+y)\,\varpi_{h_n}(\d y). \]
Since $\liminf_{v\to\infty}\frac{\eta(v)}{v^p}>0$ with $p>1$, there exists 
a totally bounded set $\Lambda_r\subset\Rd$ with
\[ J_n f=\sup_{\lambda\in\Lambda_r}\big(J_{n,\lambda}f-\eta(|\lambda|)h_n\big) \]
for all $n\in\N$, $r\geq 0$ and $f\in\Lipb(r)$.\ We use that $\eta(|\cdot|)\colon\Lambda_r\to\R_+$
is Lipschitz continuous with some constant $c_r\geq 0$ and $\bar{h}:=\sup_{n\in\N}h_n<\infty$
to estimate 
\begin{align*}
	&\big\|\big((J_{n,\lambda_1}f)-\eta(|\lambda_1|)h_n\big)
	-\big(J_{n,\lambda_2}f-\eta(|\lambda_2|)h_n\big)\big\|_\infty \\
	&\leq\sup_{x\in\Rd}\left|\int_{\Rd}f(x+\lambda_1 h_n+y)\,\varpi_{h_n}(\d y)
	-\int_{\Rd}f(x+\lambda_2 h_n+y)\,\varpi_{h_n}(\d y)\right|
	+\big|\eta(|\lambda_1|)-\eta(|\lambda_2|)\big|h_n \\
	&\leq\bar{h}(r+c_r)|\lambda_1-\lambda_2|
\end{align*}
for all $n\in\N$, $r\geq 0$, $f\in\Lipb(r)$ and $\lambda_1,\lambda_2\in\Lambda_r$.\
This shows that Assumption~\ref{ass:en_rate} is valid with $L_r:=\bar{h}(r+c_r)$, $d_r:=|\cdot|$
and $\beta_r:=1$ for all $r\geq 0$.

\medskip

\noindent\textbf{Acknowledgments.} P.~Schmocker is partly supported by the FinsureTech Hub of ETH Zurich. 

\bibliographystyle{abbrv}
\bibliography{mybib}

\end{document}